\newcommand{\leqnomode}{\tagsleft@true}
\newcommand{\reqnomode}{\tagsleft@false}
\numberwithin{equation}{section}
\theoremstyle{definition}
\newtheorem{theorem}{Theorem} 
\newtheorem*{theorem*}{Theorem}
\numberwithin{theorem}{section}
\newtheorem{corollary}[theorem]{Corollary} 
\newtheorem{lemma}[theorem]{Lemma}
\newtheorem{proposition}[theorem]{Proposition}
\newtheorem{hypothesis}[theorem]{Hypothesis} 
\newtheorem{remark}[theorem]{Remark}
\newtheorem{definition}[theorem]{Definition} 
\newtheorem{example}[theorem]{Example}
\DeclareMathOperator\Aut{Aut}
\DeclareMathOperator\GKdim{GKdim}
\DeclareMathOperator\id{id}
\DeclareMathOperator\im{im}
\DeclareMathOperator\Maxspec{Maxspec}
\DeclareMathOperator\orb{orb}
\DeclareMathOperator\ord{ord}
\DeclareMathOperator\Spec{Spec}
\DeclareMathOperator\supp{supp}
\renewcommand{\Bbb}{\mathbb}
\newcommand\NN{\mathbb N}
\newcommand\RR{\mathbb R}
\newcommand\ZZ{\mathbb Z}
\newcommand\cB{\mathcal B}
\newcommand\cI{\mathcal I}
\newcommand\cO{\mathcal O}
\newcommand\cX{\mathcal X}
\newcommand\frm{\mathfrak m}
\newcommand\frn{\mathfrak n}
\newcommand\fsl{\mathfrak{sl}}
\newcommand\ba{\mathbf a}
\newcommand\bd{\mathbf d}
\newcommand\be{\mathbf e}
\newcommand\bp{\mathbf p}
\newcommand\bq{\mathbf q}
\newcommand\br{\mathbf r}
\newcommand\bs{\mathbf s}
\newcommand\bt{\mathbf t}
\newcommand\bu{\mathbf u}
\newcommand\bv{\mathbf v}
\newcommand\bH{\mathbf H}
\newcommand\bJ{\mathbf J}
\newcommand\bK{\mathbf K}
\newcommand\bL{\mathbf L}
\newcommand\bM{\mathbf M}
\newcommand\bN{\mathbf N}
\newcommand{\bsigma}{{\bm{\sigma}}}
\newcommand{\balpha}{{\bm{\alpha}}}
\newcommand{\bbeta}{{\bm{\beta}}}
\newcommand{\bgamma}{{\bm{\gamma}}}
\newcommand{\bdelta}{{\bm{\delta}}}
\newcommand{\btau}{{\bm{\tau}}}
\newcommand{\bpi}{{\bm{\pi}}}
\newcommand{\bphi}{{\bm{\varphi}}}
\newcommand\bzero{\mathbf 0}
\newcommand\cnt{\mathcal Z}
\newcommand\css{\mathcal S}
\newcommand\inv{^{-1}}
\newcommand\iso{\cong}
\newcommand\kk{\mathds{k}}
\newcommand\tensor{\otimes}
\newcommand{\sto}{\ensuremath{\rightarrow}}
\renewcommand{\to}{\ensuremath{\longrightarrow}}
\mathchardef\mhyphen="2D
\newcommand{\wmod}{\mathrm{\mhyphen wmod}}
\newcommand{\wmodO}{\mathrm{\mhyphen wmod_{\cO}}}
\newcommand\grp[1]{{\langle #1 \rangle}}
\newcommand\restrict[1]{\raisebox{-.3ex}{$|$}_{#1}}
\definecolor{darkgreen}{RGB}{55,138,0}
\definecolor{orange}{RGB}{255,120,0}
\definecolor{Gray}{RGB}{180,180,180}
\title{Higher rank Bell--Rogalski algebras}
\author[Gaddis]{Jason Gaddis}
\address{(Gaddis) Department of Mathematics,  Miami University, Oxford, Ohio 45056} 
\email{gaddisj@miamioh.edu}
\author[Rosso]{Daniele Rosso}
\address{(Rosso) Department of Mathematics and Actuarial Science, Indiana University Northwest, Gary, IN 46408} 
\email{drosso@iu.edu}
\author[Won]{Robert Won}
\address{(Won) Department of Mathematics, The George Washington University, Washington, DC 20052}
\email{robertwon@gwu.edu}
\subjclass[2020]{16W50,16D30,16D90,16S38}
\keywords{Bell--Rogalski algebras, graded algebra, weight module, generalized Weyl algebra}
\begin{document}

\begin{abstract}
We generalize a construction of Bell and Rogalski to realize new examples of $\ZZ^n$-graded simple rings. This construction also generalizes TGWAs of type $(A_1)^n$. In addition to considering basic properties of these algebras, we provide a classification of weight modules in the setting of torsion-free orbits, study their (twisted) tensor products, and provide a simplicity criterion.
\end{abstract}

\maketitle

\section{Introduction}

The Weyl algebra is the prototypical example of an infinite-dimensional simple ring. More generally, the setting of $\ZZ$-graded rings has provided many examples of simple rings. For example, the quotient of $U(\fsl_2)$ by the ideal generated by $(\Omega+1)$, where $\Omega$ is the Casimir element, is a simple ring \cite{joseph,St2}. Jordan has given a simplicity criterion for skew Laurent rings \cite{jsimp1}. 

The examples above all fall within the class of \emph{generalized Weyl algebras} (GWAs). Simplicity in this setting has been studied by various authors including Bavula \cite{B3} and Jordan \cite{Jprim}.  Hartwig and \"{O}inert proved a simplicity criterion for \emph{twisted generalized Weyl algebras} (TGWAs) of Cartan type $(A_1)^n$ \cite{HO}. 
These TGWAs are $\ZZ^n$-graded generalizations of GWAs.
The goal of this work is to provide further examples of higher rank simple rings with a $\ZZ^n$-grading.

In \cite{BR}, Bell and Rogalski defined a class of $\ZZ$-graded algebras which generalize GWAs. These algebras were also studied in \cite{GR}.
In Section \ref{sec.defn}, we extend this construction to a more general class of algebras,
which we call \emph{BR algebras (of rank $n$)}. Bell and Rogalski's definition corresponds to the case where $n = 1$.
We also prove several basic properties for BR algebras which generalize results that are largely known in the case $n=1$: 
\begin{itemize}
    \item Every TGWA of type $(A_1)^n$ is a BR algebra (Theorem \ref{thm.tgwa}). 
    \item Certain invariant rings of BR algebras are also BR algebras (Theorem \ref{thm.fixed}).
    \item Gelfand--Kirillov dimension behaves as expected for BR algebras (Theorem \ref{thm.gkdim}).
\end{itemize}

In Section \ref{sec.weight} we study weight modules over BR algebras, simultaneously extending some of our previous results in rank $n=1$ from \cite{GRW1} as well as known results about (T)GWAs. Specifically, the main result is Theorem \ref{thm.weight}, which gives a complete classification of simple weight modules supported on torsion-free orbits. 

A canonical way of producing higher rank simple rings is through taking tensor products. It is well-known that the class of GWAs (of arbitrary rank) is closed under taking tensor products \cite{B1}. By \cite[Theorem 2.16]{GR}, the class of TGWAs which satisfy certain mild conditions is closed under tensor products. In \cite{GR3}, this was extended to $\ZZ^n$-graded twisted tensor products of TGWAs (under similar hypotheses). In Section \ref{sec.twtensor}, we show that BR algebras are also closed under twisted tensor products which preserve the natural grading on both tensor factors (Theorem \ref{thm.twtensor}). This leads to new examples of $\ZZ^n$-graded simple rings.

In Section \ref{sec.simplicity}, we give a simplicity criterion for BR algebras of rank $n$ (Theorem \ref{thm.HBRsimple}). This reduces to Bell and Rogalski's simplicity criterion in the rank $1$ case \cite[Proposition 2.18 (3)]{BR} (see also \cite[Proposition 2.3]{GR}).
However, for $n>1$, this criterion is not 
easy to check. We conclude with some ideas that may lead eventually to a better, more checkable set of conditions.

The diagram below presents the classes of algebras discussed, where arrows represent inclusion:
\[ 
\begin{tikzcd}[sep=small]
A_1(\kk) \ar[r] \ar[d] & \text{rank one GWAs} \ar[r] \ar[d] & \text{BR algebras of rank 1} \ar[dd] \\
A_n(\kk) \ar[r] & \text{rank $n$ GWAs} \ar[d] &  \\
& \text{type $(A_1)^n$ TGWAs} \ar[r] \ar[d] & \text{BR algebras of rank $n$} \ar[d] \\
& \text{TGWAs} \ar[r] & ???
\end{tikzcd}
\]
A future goal is to understand the missing corner. That is: is there a construction that properly generalizes TGWAs and BR algebras? Such a construction should also generalize, in a reasonable way, the ring theoretic and/or representation theoretic properties of both classes.

\subsection*{Acknowledgements}
D. R. was supported in this work by a Grant-In-Aid of research and a Summer Faculty Fellowship from Indiana University Northwest, and by an AMS--Simons Research Enhancement Grant for PUI Faculty.
R. W. was supported by Simons Foundation grant \#961085.

\section{Definitions and initial results}
\label{sec.defn}

Throughout this paper, let $\kk$ be a field. All algebras in this paper will be associative unital $\kk$-algebras. Let $[n]$ denote the set $\{1,\hdots,n\}$. A matrix $\bp = (p_{ik}) \in M_n(\kk^\times)$ is \emph{multiplicatively antisymmetric} if $p_{ik}=p_{ki}\inv$ for all $i, k \in [n]$.

Let $G$ be an abelian monoid. An algebra $A$ is \emph{$G$-graded} if there is a $\kk$-vector space decomposition $A=\bigoplus_{g \in G} A_g$ such that $A_g \cdot A_h \subseteq A_{gh}$ for all $g,h \in G$. For $g \in G$, an element $a \in A_g$ is called \emph{homogeneous} of degree $g$. For a $G$-graded Ore domain $A$, we write $Q_{\mathrm{gr}}(A)$ for the \emph{graded quotient ring of $A$}, the localization of $A$ at all nonzero homogeneous elements.  Throughout this paper, all $A$-modules will be left $A$-modules. An $A$-module $M$ is called \emph{$G$-graded} if there is a $\kk$-vector space decomposition $M = \bigoplus_{g \in G} M_g$ such that $A_g \cdot M_h  \subseteq M_{gh}$ for all $g,h \in G$. A (left/right/two-sided) ideal of $A$ is \emph{$G$-graded} if it is generated by homogeneous elements. In this paper, we focus almost exclusively on the case $G=\ZZ^n$.

\begin{definition}
Let $R$ be an algebra, let $\bsigma=(\sigma_1,\hdots,\sigma_n)$ be an $n$-tuple of commuting algebra automorphisms of $R$, and let $\bp = (p_{ik}) \in M_n(\kk^\times)$ be multiplicatively antisymmetric. The corresponding \emph{iterated skew Laurent extension over $R$}, denoted $R_{\bp}[\bt^{\pm 1};\bsigma]$, is generated over $R$ by $t_1^{\pm},\hdots,t_n^{\pm}$ subject to the relations
\begin{align*}
t_i^{\pm 1} r &= \sigma_i^{\pm 1}(r) t_i^{\pm} \quad \text{and} \quad
t_kt_i = p_{ik} t_i t_k
\qquad \text{for all $i, k \in [n]$, $i\neq k$, $r \in R$.}
\end{align*}
Note that $R_{\bp}[\bt^{\pm 1};\bsigma]$ is $\ZZ^n$-graded by setting $\deg t_i = \be_i = (0, \dots, 0, \stackrel{i}{1}, 0, \dots, 0)$ for all $i \in [n]$ and $\deg r = \bzero$ for all $r \in R$. In fact, if $R$ is $\Gamma$-graded by an abelian monoid $\Gamma$, then $R_{\bp}[\bt^{\pm 1};\bsigma]$ is $(\Gamma \times \ZZ^n)$-graded by setting $\deg t_i  = (0, \be_i)$ for all $i \in [n]$ and $\deg r = (\gamma, \bzero)$ for $r \in R_{\gamma}$.
\end{definition}

When $n=1$, there is only a single automorphism $\sigma$ and the matrix $\bp$ is irrelevant. 
The corresponding skew Laurent extension over $R$ is denoted $R[t^{\pm 1}; \sigma]$ in this case.

Let $R$ be an algebra, $\sigma \in \Aut(R)$, and let $H$ and $J$ be ideals of $R$. For each $k\in \ZZ$, set
\[
I_{\sigma}^{(k)}(H,J) = \begin{cases}
	J\sigma(J)\cdots \sigma^{k-1}(J) & \text{if $k > 0$,} \\
	R & \text{if $k = 0$,}\\
	\sigma\inv(H)\sigma^{-2}(H)\cdots\sigma^{k}(H) & \text{if $k < 0$.} 
	\end{cases}
\]

\begin{definition}\label{def.brd}
Let $R$ be an algebra and $n$ a positive integer. Let $\bsigma=(\sigma_1,\hdots,\sigma_n)$ be an $n$-tuple of commuting algebra automorphisms of $R$. Let $\bp \in M_n(\kk^\times)$ be multiplicatively antisymmetric. Let $\bH = (H_1,\hdots,H_n)$ and $\bJ = (J_1,\hdots,J_n)$ be $n$-tuples of two-sided ideals of $R$. For each $i \in [n]$ and $k \in \ZZ$, let $I_i^{(k)} := I_{\sigma_i}^{(k)}(H_i,J_i)$. The tuple $(R,\bt,\bsigma,\bp,\bH,\bJ)$ is a \emph{Bell--Rogalski datum} (BRD) if 
\begin{align}\label{eq.comm1}
\sigma_i(H_k)=H_k \quad\text{and}\quad \sigma_i(J_k)=J_k \quad\text{for $i \neq k \in [n]$},
\end{align}
\begin{align}
\label{eq.comm2}
H_iH_k = H_kH_i, \quad H_iJ_k = J_kH_i, \quad \text{and} \quad J_iJ_k = J_kJ_i \quad \text{for $i\neq k \in [n]$},
\end{align}
and $I_{i}^{(k)} \neq 0$ for all $i \in [n]$ and all $k \in \ZZ$.
\end{definition}

By applying $\sigma_i$ to equation~\eqref{eq.comm2}, it follows that $H_k$ and $J_k$ commute with $\sigma_i^\ell(H_i)$ and $\sigma_i^\ell(J_i)$ for every  $i \neq k \in [n]$ and every integer $\ell$. 
Given the setup of Definition \ref{def.brd} and $\balpha = (\alpha_1, \dots, \alpha_n) \in \ZZ^n$, let $I^{(\balpha)} = I_1^{(\alpha_1)} \cdots I_n^{(\alpha_n)}$ and $\bt^{\balpha} = t_1^{\alpha_1} \cdots t_n^{\alpha_n}$. We are now ready to define a higher-rank version of Bell--Rogalski algebras \cite{BR,GRW1}. 

\begin{definition}\label{defn.BRalg}
Given a BRD $(R,\bt,\bsigma,\bp,\bH,\bJ)$, the corresponding \emph{Bell--Rogalski (BR) algebra (of rank $n$)} is defined as 
\[ R_{\bp}(\bt,\bsigma,\bH,\bJ) = \bigoplus_{\balpha \in \ZZ^n} I^{(\balpha)} \bt^\balpha \subset R_{\bp}[\bt^{\pm 1};\bsigma].\]
\end{definition}

The definition above in rank one reduces to the construction of Bell and Rogalski, which was used to classify $\ZZ$-graded simple rings up to graded Morita equivalence \cite{BR}. These were also studied by the authors in \cite{GRW1}. 
Throughout, we will suppress ``of rank $n$" unless the parameter $n$ is relevant to the discussion.

\begin{lemma}\label{lem.domain}
Let $B$ be a BR algebra over an algebra $R$. Then $B$ is a domain if and only if $R$ is a domain.
\end{lemma}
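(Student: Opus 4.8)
The plan is to exploit the fact that $B = R_{\bp}(\bt,\bsigma,\bH,\bJ)$ sits inside the iterated skew Laurent extension $R_{\bp}[\bt^{\pm 1};\bsigma]$, so it suffices to prove that the ambient ring is a domain precisely when $R$ is. One direction is trivial: if $B$ is a domain then so is its subring $R = I^{(\bzero)}\bt^{\bzero}$. For the converse, assume $R$ is a domain and argue that $R_{\bp}[\bt^{\pm 1};\bsigma]$ is a domain; since $B \subseteq R_{\bp}[\bt^{\pm 1};\bsigma]$, this forces $B$ to be a domain as well.

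To see that $S := R_{\bp}[\bt^{\pm 1};\bsigma]$ is a domain, I would proceed by induction on $n$, building $S$ as an iterated extension $R[t_1^{\pm 1};\sigma_1][t_2^{\pm 1};\sigma_2']\cdots$, where at each stage the new automorphism is the (commuting, hence well-defined up to the scalar $p_{ik}$) extension of $\sigma_{k}$ to the skew Laurent ring constructed so far. At each step one uses the standard fact that a skew Laurent extension $T[t^{\pm 1};\tau]$ of a domain $T$ by an automorphism $\tau$ is again a domain: every nonzero element has a well-defined top-degree and bottom-degree coefficient in $t$, and the leading term of a product of two nonzero elements is the product (twisted by a power of $\tau$, and by a nonzero scalar coming from $\bp$) of the leading terms, hence nonzero because $T$ is a domain. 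The scalars $p_{ik} \in \kk^\times$ never vanish, so they do not interfere with this leading-term computation. One should check that the relations $t_k t_i = p_{ik} t_i t_k$ and $t_i r = \sigma_i(r) t_i$ are consistent with realizing $S$ as such an iterated extension — this is routine since the $\sigma_i$ commute and $\bp$ is multiplicatively antisymmetric, so each $\sigma_k$ extends to an automorphism of the partial skew Laurent ring sending $t_i \mapsto p_{ik}\inv t_i$ for $i < k$.

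Alternatively, and perhaps more cleanly, one can use the $\ZZ^n$-grading on $S$ directly: each homogeneous component $S_{\balpha} = R\,\bt^{\balpha}$ is a free left (and right) $R$-module of rank one, and for nonzero $x \in S_{\balpha}$, $y \in S_{\bbeta}$ one has $xy \in S_{\balpha + \bbeta}$ with $xy = r\,\bt^{\balpha+\bbeta}$ where $r$ is, up to a nonzero scalar and an application of an automorphism, the product of the $R$-coefficients of $x$ and $y$; hence $xy \neq 0$. For general (non-homogeneous) nonzero elements, pick a term-ordering on $\ZZ^n$ (say lexicographic) and compare leading terms as above. I expect the only mild obstacle is bookkeeping: verifying that the leading term of a product is genuinely the product of leading terms requires knowing that distinct monomials $\bt^{\balpha}$ remain distinct after multiplication and that no cancellation occurs in the top component — but this follows immediately from the freeness of each graded piece over the domain $R$ and the invertibility of the structure scalars $p_{ik}$. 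No part of the argument is deep; the content is simply organizing the reduction to $S$ and invoking the classical skew Laurent domain lemma.
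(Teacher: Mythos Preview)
Your proposal is correct and follows essentially the same approach as the paper: observe that $R = B_{\bzero}$ for one direction, and for the converse note that $B$ is a subalgebra of the iterated skew Laurent extension $R_{\bp}[\bt^{\pm 1};\bsigma]$, which is a domain whenever $R$ is. The paper simply asserts this last fact without justification, whereas you supply the (standard) leading-term or iterated-extension argument; the extra detail is fine but not required.
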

\begin{proof}
If $B$ is a domain, then so is $B_{\bzero}=R$. Conversely, if $R$ is a domain, then the iterated skew Laurent extension $R_{\bp}[\bt^{\pm 1}; \bsigma]$ is a domain, hence so is the subalgebra $B$.
\end{proof}

The following should be compared to \cite[Lemma 2.10(1)]{BR} and \cite[Lemma 2.1(1)]{GRW1}. Note that every algebra $R$ admits a trivial grading, so $\Gamma$ can be taken to be trivial below.

\begin{lemma}\label{lem.BRgen}
Let $B = R_{\bp}(\bt, \bsigma, \bH, \bJ)$ be a BR algebra of rank $n$. Suppose that $R$ is $\Gamma$-graded by an abelian monoid $\Gamma$. Then $B$ is a $(\Gamma \times \ZZ^n)$-graded subring of $R_{\bp}[\bt^{\pm 1}; \bsigma]$. Moreover, $B$ is generated as an algebra by $B_\bzero = R$, $B_{\be_i} = J_it_i$, and $B_{-\be_i} = \sigma_i\inv(H_i)t_i\inv$, over all $i \in [n]$. In particular, if $R$ is finitely generated as an algebra and each $J_i$ and $H_i$ is finitely generated as an ideal of $R$, then $B$ is finitely generated as an algebra.
\end{lemma}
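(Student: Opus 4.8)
The plan is to prove the three assertions in order, working inside the ambient iterated skew Laurent extension $S := R_{\bp}[\bt^{\pm 1};\bsigma]$, which the preceding definition already equips with a $(\Gamma\times\ZZ^n)$-grading ($\deg t_i = (0,\be_i)$, $\deg r = (\gamma,\bzero)$ for $r\in R_\gamma$). For the grading assertion, $B = \bigoplus_{\balpha\in\ZZ^n} I^{(\balpha)}\bt^\balpha$ is by construction a $\kk$-subspace that is a direct sum of $(\Gamma\times\ZZ^n)$-homogeneous subspaces of $S$, since each $I^{(\balpha)}$ is a product of the homogeneous ideals $\sigma_i^j(H_i)$, $\sigma_i^j(J_i)$ and hence is itself a homogeneous ideal of $R$; as $1\in R = I^{(\bzero)}\bt^{\bzero}$, the only real point is that $B$ is closed under multiplication. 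Using $t_ir = \sigma_i(r)t_i$ and $t_kt_i = p_{ik}t_it_k$, a product of homogeneous elements $a\bt^\balpha\cdot b\bt^\bbeta$ with $a\in I^{(\balpha)}$, $b\in I^{(\bbeta)}$ equals $\lambda\, a\,\bsigma^\balpha(b)\,\bt^{\balpha+\bbeta}$ for a scalar $\lambda\in\kk^\times$ built from $\bp$, where $\bsigma^\balpha := \sigma_1^{\alpha_1}\cdots\sigma_n^{\alpha_n}$; so closure amounts to the ideal inclusion
\[ I^{(\balpha)}\cdot\bsigma^\balpha\!\big(I^{(\bbeta)}\big)\ \subseteq\ I^{(\balpha+\bbeta)}. \]

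To establish that inclusion I would first observe, using \eqref{eq.comm1} and commutativity of the $\sigma_i$, that $\sigma_i$ fixes $I_k^{(m)}$ whenever $i\neq k$, so $\bsigma^\balpha\!\big(I_k^{(\beta_k)}\big) = \sigma_k^{\alpha_k}\!\big(I_k^{(\beta_k)}\big)$. Next, applying powers of the $\sigma_i$ to the relations \eqref{eq.comm2} and to the remark following Definition \ref{def.brd}, one checks that for $i\neq k$ the ideals $I_i^{(\alpha_i)}$ and $\sigma_k^{\alpha_k}\!\big(I_k^{(\beta_k)}\big)$ commute; expanding both sides of the displayed inclusion as products over the indices and regrouping then reduces it to the rank-one statement $I_\sigma^{(j)}(H,J)\,\sigma^j\!\big(I_\sigma^{(k)}(H,J)\big)\subseteq I_\sigma^{(j+k)}(H,J)$, a short case analysis on the signs of $j$, $k$ and $j+k$ in which the relevant products of $\sigma^m(H)$'s and $\sigma^m(J)$'s telescope, using only that $H$ and $J$ are two-sided ideals (so $HR = H$, $RJ = J$, and $HJ\subseteq R$). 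I expect this commute-and-regroup step to be the main obstacle: it is where the hypotheses \eqref{eq.comm1}--\eqref{eq.comm2} genuinely do their work, and one must take care to move $\sigma_i$'s through the commutation relations without upsetting the $i\neq k$ bookkeeping.

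For the generating set, let $B'$ be the subalgebra generated by $R$, the $J_it_i$, and the $\sigma_i^{-1}(H_i)t_i^{-1}$; plainly $B'\subseteq B$, and it suffices to show $I^{(\balpha)}\bt^\balpha\subseteq B'$ for every $\balpha$. For a single index, the $\kk$-span of the $\ell$-fold products of elements of $J_it_i$ is exactly $J_i\sigma_i(J_i)\cdots\sigma_i^{\ell-1}(J_i)\,t_i^{\ell} = I_i^{(\ell)}t_i^{\ell}$ when $\ell>0$, and symmetrically the $\ell$-fold products from $\sigma_i^{-1}(H_i)t_i^{-1}$ span $I_i^{(-\ell)}t_i^{-\ell}$ (here using that a product ideal is the span of products of generators); hence each $I_i^{(\alpha_i)}t_i^{\alpha_i}\subseteq B'$. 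Using again that $\sigma_i$ fixes $I_k^{(\beta)}$ for $i\neq k$, together with $\bt^\balpha = t_1^{\alpha_1}\cdots t_n^{\alpha_n}$, the product $\prod_i\big(I_i^{(\alpha_i)}t_i^{\alpha_i}\big)$ collapses to $I^{(\balpha)}\bt^\balpha$, so $I^{(\balpha)}\bt^\balpha\subseteq B'$ and thus $B = B'$. Finally, if $R$ is generated as a $\kk$-algebra by $r_1,\dots,r_m$ and each $J_i$ (resp. $H_i$) is generated as a two-sided ideal by $a_{i,1},\dots$ (resp. $b_{i,1},\dots$), then it is enough to express $R$, $J_it_i$ and $\sigma_i^{-1}(H_i)t_i^{-1}$ using the finite set $\{r_j\}\cup\{a_{i,\ell}t_i\}\cup\{\sigma_i^{-1}(b_{i,\ell})t_i^{-1}\}$: a spanning element $ra_{i,\ell}r'\,t_i$ of $J_it_i$ rewrites as $r\,(a_{i,\ell}t_i)\,\sigma_i^{-1}(r')$ via $r't_i = t_i\sigma_i^{-1}(r')$, and a spanning element of $\sigma_i^{-1}(H_i)t_i^{-1}$ rewrites as $\sigma_i^{-1}(r)\,(\sigma_i^{-1}(b_{i,\ell})t_i^{-1})\,r'$ via $\sigma_i^{-1}(r')t_i^{-1} = t_i^{-1}r'$, all degree-$\bzero$ factors being expressible in the $r_j$. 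This last part is routine bookkeeping once the generating statement is in hand.
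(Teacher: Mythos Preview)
Your proposal is correct and follows essentially the same approach as the paper's proof. Both arguments verify closure under multiplication by moving $\bt^{\balpha}$ past $I^{(\bbeta)}$ and using \eqref{eq.comm1}--\eqref{eq.comm2} to reduce to a coordinate-by-coordinate (rank-one) check, and both obtain the generating statement by factoring $B_{\balpha} = \prod_k B_{\alpha_k\be_k}$ and recognising each factor as a power of $B_{\pm\be_k}$; your write-up simply supplies more detail than the paper's terse ``applied to each coordinate,'' and you explicitly spell out the finite-generation step that the paper leaves as an immediate consequence. One incidental point: you state only the inclusion $I^{(\balpha)}\,\bsigma^{\balpha}(I^{(\bbeta)})\subseteq I^{(\balpha+\bbeta)}$, which is all that is needed, whereas the paper writes equality---your version is actually the more careful one, since equality can fail when $\alpha_i$ and $\beta_i$ have opposite signs.
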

\begin{proof}
Let $\balpha,\bbeta \in \ZZ^n$. By \eqref{eq.comm1} and \eqref{eq.comm2} applied to each coordinate, we have
\[
I^{(\balpha)}\bt^{\balpha} I^{(\bbeta)}\bt^{\bbeta}
= I^{(\balpha)}\bsigma^{\balpha}\left(I^{(\bbeta)}\right)\bt^{\balpha+\bbeta}
= I^{(\balpha+\bbeta)}\bt^{\balpha+\bbeta}.
\]
Hence, $B$ is indeed a subring of $R_{\bp}[\bt^{\pm 1}; \bsigma]$ which inherits the natural $\Gamma \times \ZZ^n$-grading.

If $\balpha \in \ZZ^n$, then
\[
B_{\balpha} = I^{(\balpha)} \bt^{\balpha} = \prod_{k = 1}^n I_k^{(\alpha_k)} t_k^{\alpha_k} = \prod_{k=1}^n B_{\alpha_k \be_k}.
\]
Hence, it suffices to show that $B_\bzero$, $B_{\be_k}$, and $B_{-\be_k}$ generate each $B_{\alpha \be_k}$, where $\alpha \in \ZZ$. If $\alpha = 0$, then $B_{\alpha \be_k} = B_\bzero$. If $\alpha > 0$, then since
\[
I_k^{(\alpha \be_k)} t_k^{\alpha} = J_k \sigma_k(J_k) \cdots \sigma_k^{\alpha-1}(J_k) t_k^{\alpha} = (J_k t_k) \cdots (J_k t_k),
\]
then $B_{\alpha \be_k} = (B_{\be_k})^{\alpha}$. Similarly, if $\alpha < 0$, then $B_{\alpha \be_k} = (B_{-\be_k})^{-\alpha}$. The result follows.
\end{proof}

\begin{lemma}\label{lem.iter}
Let $B$ be a BR algebra of rank $n>1$. 
Then $B$ can be expressed as a BR algebra of rank $1$ over a BR algebra of rank $n - 1$.
\end{lemma}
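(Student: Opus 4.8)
The plan is to peel off the last coordinate $t_n$. Write $\bsigma' = (\sigma_1,\dots,\sigma_{n-1})$ and $\bt' = (t_1,\dots,t_{n-1})$, let $\bp'$ be the upper-left $(n-1)\times(n-1)$ submatrix of $\bp$, and set $\bH' = (H_1,\dots,H_{n-1})$, $\bJ' = (J_1,\dots,J_{n-1})$. The BRD axioms for $(R,\bt',\bsigma',\bp',\bH',\bJ')$ are a subset of those for $(R,\bt,\bsigma,\bp,\bH,\bJ)$, so this is a BRD of rank $n-1$; let $B' = R_{\bp'}(\bt',\bsigma',\bH',\bJ') \subseteq R_{\bp'}[{\bt'}^{\pm 1};\bsigma']$ be the associated BR algebra. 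First I would record the (routine) fact that iterated skew Laurent extensions can be built up one variable at a time, so that $R_{\bp}[\bt^{\pm 1};\bsigma] = R_{\bp'}[{\bt'}^{\pm 1};\bsigma'][t_n^{\pm 1};\tau_0]$, where $\tau_0 \in \Aut(R_{\bp'}[{\bt'}^{\pm 1};\bsigma'])$ is conjugation by $t_n$; concretely $\tau_0(r) = \sigma_n(r)$ for $r \in R$ and $\tau_0(t_i) = p_{in}t_i$ for $i \in [n-1]$.

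Next I would check that $\tau_0$ restricts to an automorphism $\tau$ of $B'$. By \eqref{eq.comm1} the automorphism $\sigma_n$ fixes each $H_k$ and $J_k$ with $k \in [n-1]$, and since it commutes with every $\sigma_i$ it therefore fixes each ideal $I^{(\balpha')}$ for $\balpha' \in \ZZ^{n-1}$; combined with $\tau_0(t_i) = p_{in}t_i$ this gives $\tau_0(I^{(\balpha')}\bt^{\balpha'}) = I^{(\balpha')}\bt^{\balpha'}$ for every $\balpha'$, whence $\tau_0(B') = B'$. Then I would introduce the subspaces
\[
J := \bigoplus_{\balpha' \in \ZZ^{n-1}} I^{(\balpha')}J_n\,\bt^{\balpha'}, \qquad H := \bigoplus_{\balpha' \in \ZZ^{n-1}} I^{(\balpha')}H_n\,\bt^{\balpha'}
\]
of $B'$ and verify that they are two-sided ideals; this follows from the multiplication rule $I^{(\bbeta')}\bt^{\bbeta'}\cdot I^{(\balpha')}\bt^{\balpha'} = I^{(\bbeta'+\balpha')}\bt^{\bbeta'+\balpha'}$ inside $B'$, from the fact that each $\sigma_i$ with $i \in [n-1]$ fixes $H_n$ and $J_n$, and from the remark following Definition \ref{def.brd}, which gives that $H_n$ and $J_n$ commute with every $\sigma_i^\ell(H_i)$ and $\sigma_i^\ell(J_i)$.

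The heart of the argument is the identity
\[
I_\tau^{(k)}(H,J) = \bigoplus_{\balpha' \in \ZZ^{n-1}} I^{(\balpha')}I_n^{(k)}\,\bt^{\balpha'} \qquad \text{for all } k \in \ZZ,
\]
as subspaces of $B'$. For $k > 0$ one computes $\tau^j(J) = \bigoplus_{\balpha'} I^{(\balpha')}\sigma_n^j(J_n)\bt^{\balpha'}$, then multiplies out $J\,\tau(J)\cdots\tau^{k-1}(J)$ inside $B'$ by induction on $k$, using three inputs: the skew Laurent twist, which collapses $I^{(\balpha'_0)}\bt^{\balpha'_0}\cdot I^{(\balpha'_1)}\bt^{\balpha'_1}$ to $I^{(\balpha'_0+\balpha'_1)}\bt^{\balpha'_0+\balpha'_1}$ (note the naive product $I^{(\balpha'_0)}I^{(\balpha'_1)}$ would \emph{not} work, so the $\bsigma$-twist built into the multiplication is essential); the fact that each $\sigma_i$ with $i \in [n-1]$ fixes every $\sigma_n^j(J_n)$; and the commutation of the $\sigma_n^j(J_n)$ with the ideals $I^{(\cdot)}$, so that the $J_n$-factors collect as $J_n\sigma_n(J_n)\cdots\sigma_n^{k-1}(J_n) = I_n^{(k)}$. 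The case $k < 0$ is identical with $H$ in place of $J$ (the explicit form of $H$ is chosen precisely so that $\tau^{-j}(H) = \bigoplus_{\balpha'}I^{(\balpha')}\sigma_n^{-j}(H_n)\bt^{\balpha'}$), and $k = 0$ is immediate since $I_n^{(0)} = R$. In particular, taking $\balpha' = \bzero$ shows $I_\tau^{(k)}(H,J) \supseteq I_n^{(k)} \neq 0$, so $(B',t_n,\tau,H,J)$ is a rank-one BRD. Finally, writing $\balpha = (\balpha',\alpha_n)$ one has $I^{(\balpha)} = I^{(\balpha')}I_n^{(\alpha_n)}$ and $\bt^{\balpha} = \bt^{\balpha'}t_n^{\alpha_n}$, and since $\bsigma^{\balpha'}$ fixes $I_n^{(\alpha_n)}$, reorganizing the sum defining $B$ according to the value of $\alpha_n$ yields
\[
B = \bigoplus_{\alpha_n \in \ZZ}\Bigl(\bigoplus_{\balpha' \in \ZZ^{n-1}}I^{(\balpha')}I_n^{(\alpha_n)}\bt^{\balpha'}\Bigr)t_n^{\alpha_n} = \bigoplus_{\alpha_n \in \ZZ} I_\tau^{(\alpha_n)}(H,J)\,t_n^{\alpha_n} = B'(t_n,\tau,H,J),
\]
the rank-one BR algebra over the rank-$(n-1)$ BR algebra $B'$, which is what we want. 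I expect the main obstacle to be the bookkeeping behind the displayed formula for $I_\tau^{(k)}(H,J)$: one must juggle two different notions of commutation — the automorphism twists $\sigma_i$ built into the skew Laurent multiplication and the elementwise commutativity of ideals from \eqref{eq.comm2} — while keeping careful track of which ideals are fixed by which automorphisms.
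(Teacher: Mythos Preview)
Your proof is correct and follows the same strategy as the paper's: peel off $t_n$, form the rank-$(n-1)$ BR algebra $B'$ (the paper's $S$), extend $\sigma_n$ to $B'$ via $t_i \mapsto p_{in}t_i$, and take the rank-one BR algebra over $B'$ with ideals generated by $H_n$ and $J_n$ (your $H,J$ coincide with the paper's $\widehat H = H_nS$, $\widehat J = J_nS$). The paper simply asserts the graded isomorphism $S(t_n,\sigma_n,\widehat H,\widehat J) \cong B$ as ``clear,'' whereas you supply the explicit verification via the formula $I_\tau^{(k)}(H,J) = \bigoplus_{\balpha'} I^{(\balpha')}I_n^{(k)}\bt^{\balpha'}$, which is exactly the missing detail.
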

\begin{proof}
Let $B = R_{\bp}(\bt, \bsigma, \bH, \bJ)$ be as above. Let $\bp' \in M_{n-1}(\kk^\times)$ be the submatrix of $\bp$ consisting of rows and columns $1,\hdots,n-1$. Note that $\bp'$ is again multiplicatively antisymmetric. Let $\bsigma'=(\sigma_1,\hdots,\sigma_{n-1})$, let $\bt'=(t_1, \dots, t_{n-1})$, let $\bH'=(H_1, \dots, H_{n-1})$, and let $\bJ'=(J_1, \dots, J_{n-1})$. Set
\[
S = R_{\bp'}(\bt', \bsigma', \bH',\bJ') \cong \bigoplus_{\substack{\balpha \in \ZZ^n \\ \text{with } \alpha_n = 0}} I^{(\balpha)} \bt^\balpha,
\]
so that $S$ is a rank $n-1$ BR algebra with its natural $\ZZ^{n-1}$-grading.

We extend $\sigma_n$ to an automorphism of $S$ by setting $\sigma_n(t_i)=p_{in}t_i$ for $i<n$. 
Set $\widehat{H}=H_nS$ and $\widehat{J}=J_nS$.
Note that $\widehat{H}$ and $\widehat{J}$ are two-sided ideals of $S$ since $H_n$ and $J_n$ are two-sided ideals of $R$ which are fixed by $\sigma_i$ for all $i < n$. Then it is clear that 
$S(t_n,\sigma_n,\widehat{H},\widehat{J})$ is a $\ZZ^{n-1} \times \ZZ = \ZZ^n$-graded ring which is graded isomorphic to $B$.
\end{proof}

The class of BR algebras is closed under tensor products. This is not difficult to show. We give a more general argument in Section \ref{sec.twtensor} where we show that the class of BR algebras is, in fact, closed under certain types of \emph{twisted} tensor products.

\subsection{Relation to TGWAs}

The next definition is a special type of \emph{twisted generalized Weyl algebra} (TGWA). We do not give the full definition of a TGWA here, but recall a particular presentation of this type due to Futorny and Hartwig \cite[Theorem 4.1]{FH1}.

\begin{definition}\label{def.tgwa}
Let $R$ be an algebra and $n$ a positive integer. Let $\bsigma=(\sigma_1,\hdots,\sigma_n)$ be an $n$-tuple of commuting automorphisms and let $\ba=(a_1,\hdots,a_n)$ be an $n$-tuple of central, non-zero divisors in $R$ and suppose there exist, for $i \neq k$, scalars $\gamma_{ik} \in \kk^\times$ such that $\sigma_i(a_k)=\gamma_{ik}a_k$.
For $\mu_{ik} \in \kk^\times$ with $i\neq k$ satisfying 
\begin{align}\label{eq.Acons}
\mu_{ik}\mu_{ki} = \gamma_{ik}\gamma_{ki} \text{ for all $i \neq k$},
\end{align}
the corresponding \emph{twisted generalized Weyl algebra (TGWA) of type $(A_1)^n$}, denoted $A_\mu(R,\bsigma,\ba)$, is generated over $R$ by the $2n$ indeterminantes $X_1^{\pm},\hdots,X_n^{\pm}$ modulo the two-sided ideal generated by the following set of elements:
\begin{align*}
&X_i^{\pm} r - \sigma_i^{\pm 1}(r) X_i^{\pm}, \quad
X_i^-X_i^+ - a_i, \quad X_i^+X_i^- - \sigma_i(a_i),
&\text{for all $r \in R$ and $i \in [n]$,} \\
&X_i^+ X_k^- - \mu_{ik} X_k^- X_i^+, \quad
X_i^+ X_k^+ - \gamma_{ik}\mu_{ik}\inv X_k^+ X_i^+, \quad
X_k^- X_i^- - \gamma_{ik} \mu_{ki}\inv X_i^- X_k^-,
 &\text{for all $i,k \in [n]$, $i \neq k$.}
\end{align*}
\end{definition}

We say a BR algebra $R_\bp(\bt,\bsigma,\bH,\bJ)$ has \emph{scalar units} if $\sigma_i(u)=u$ for all $u \in R^\times$ and all $i \in [n]$. This hypothesis appears in the classification of TGWAs over polynomial rings \cite[Definition 2.5]{HR1} and applies to the most relevant examples of BR algebras.

\begin{theorem}\label{thm.tgwa}
Let $R$ be a commutative algebra. 
\begin{enumerate}
    \item Let $B=R_\bp(\bt,\bsigma,\bH,\bJ)$ be a BR algebra of rank $n$ with scalar units. If $H_i,J_i$ are principal for each $i$, then $B$ is graded isomorphic to a consistent TGWA of type $(A_1)^n$ over $R$.

    \item If $A=A_\mu(R,\bsigma,\ba)$ is a TGWA of type $(A_1)^n$ over $R$, then there exists a BR algebra $B$ such that $A \iso B$ as graded algebras.
\end{enumerate}
\end{theorem}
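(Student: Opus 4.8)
The plan is to prove the two directions separately, in each case producing an explicit presentation and matching relations coordinate-by-coordinate, using Lemma \ref{lem.BRgen} to reduce checking algebra isomorphisms to checking that generators and relations correspond.

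\textbf{Part (1).} Suppose $B = R_\bp(\bt,\bsigma,\bH,\bJ)$ has scalar units and $H_i = (h_i)$, $J_i = (j_i)$ are principal. By Lemma \ref{lem.BRgen}, $B$ is generated over $R$ by the elements $X_i^+ := j_i t_i \in B_{\be_i}$ and $X_i^- := \sigma_i\inv(h_i) t_i\inv \in B_{-\be_i}$. First I would compute the ``internal'' relations: $X_i^- X_i^+ = \sigma_i\inv(h_i) t_i\inv j_i t_i = \sigma_i\inv(h_i)\sigma_i\inv(j_i)$ and $X_i^+ X_i^- = j_i t_i \sigma_i\inv(h_i) t_i\inv = j_i h_i$, so setting $a_i := \sigma_i\inv(h_i j_i)$ gives $X_i^- X_i^+ = a_i$ and $X_i^+ X_i^- = \sigma_i(a_i)$ once one checks $\sigma_i(a_i) = h_i j_i$ — this uses $\sigma_i(H_i) $ need not equal $H_i$, so care is needed; in fact $\sigma_i(a_i) = h_i j_i$ holds as an \emph{element}, not just up to the ideal, because $h_i, j_i$ are the chosen generators, but one must note $a_i$ is only well-defined up to a unit, and scalar units ensures the resulting scalars $\gamma_{ik}$ are unchanged. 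Commutativity of $R$ makes $a_i$ central. Next I would compute the cross relations using \eqref{eq.comm1}, \eqref{eq.comm2}, and $t_k t_i = p_{ik} t_i t_k$: e.g. $X_i^+ X_k^- = j_i t_i \sigma_k\inv(h_k) t_k\inv = j_i \sigma_i\sigma_k\inv(h_k) t_i t_k\inv = j_i \sigma_i\sigma_k\inv(h_k) p_{ki}\inv t_k\inv t_i$, and similarly expand $X_k^- X_i^+$; since $\sigma_k$ fixes $H_i$ and $j_i$ commutes with $\sigma_k^\ell(h_k)$, the two agree up to an explicit scalar $\mu_{ik}$ which is a monomial in the $p_{ik}$. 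One then reads off $\gamma_{ik}$ from $\sigma_i(a_k) = \gamma_{ik} a_k$ (a scalar since $\sigma_i(J_k)=J_k$, $\sigma_i(H_k)=H_k$, $R$ commutative, and scalar units force the proportionality constant to be a scalar), and verifies the consistency condition \eqref{eq.Acons} $\mu_{ik}\mu_{ki} = \gamma_{ik}\gamma_{ki}$ as an identity among these $p$-monomials. Finally, to conclude the isomorphism rather than just a surjection $A_\mu(R,\bsigma,\ba) \twoheadrightarrow B$, I would invoke the known fact that a TGWA of type $(A_1)^n$ is free as a left $R$-module on monomials indexed by $\ZZ^n$ with the ``expected'' size of graded pieces, and that here each graded piece $B_\balpha = I^{(\balpha)}\bt^\balpha$ is principal as an $R$-module of the same rank, so the surjection is a graded isomorphism degree by degree.

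\textbf{Part (2).} Conversely, given $A = A_\mu(R,\bsigma,\ba)$, I would reverse-engineer a BRD. Set $H_i := \sigma_i(a_i) R = a_i' R$ and $J_i := a_i R$ (choosing $h_i = \sigma_i(a_i)$, $j_i = a_i$, so that $a_i = \sigma_i\inv(h_i) = \sigma_i\inv(h_i j_i)/j_i$ — more precisely $\sigma_i\inv(h_i)\sigma_i\inv(j_i) = a_i \sigma_i\inv(a_i)$, hmm, so one should instead take $J_i = a_i R$, $H_i = R$ or adjust; the clean choice is $j_i = a_i$ and $h_i = \sigma_i(a_i)$ giving $X_i^- X_i^+ = \sigma_i\inv(h_i)\sigma_i\inv(j_i)$, so one actually wants the GWA-style normalization where only one of $H_i, J_i$ carries $a_i$). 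The correct bookkeeping: take $J_i = a_i R$ and $H_i = R$ (so $h_i = 1$), then $X_i^+ := a_i t_i$... but then $X_i^- X_i^+ = \sigma_i\inv(a_i)$, not $a_i$. So one must use the relabeling where $B$ over $R$ with $J_i = a_i R$, $H_i = R$ recovers a GWA with the shift convention matching Definition \ref{def.tgwa} after possibly replacing $\sigma_i$ by $\sigma_i\inv$ or $a_i$ by $\sigma_i\inv(a_i)$ — I would fix one consistent convention and verify the defining relations of $A$ map to relations holding in $B$, i.e. construct a surjection $A \to B$. For the commutation data: since $\ba$ consists of central nonzerodivisors and $\sigma_i(a_k) = \gamma_{ik} a_k$, conditions \eqref{eq.comm1} and \eqref{eq.comm2} hold automatically for principal ideals generated by the $a_k$ and $R$; and $I_i^{(k)} \neq 0$ for all $k$ since the $a_i$ are nonzerodivisors in the domain-free sense (products of shifts of nonzerodivisors are nonzero), so the BRD axioms are met. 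The matrix $\bp$ is then forced: $t_k t_i = p_{ik} t_i t_k$ must be consistent with $X_i^+ X_k^+ = \gamma_{ik}\mu_{ik}\inv X_k^+ X_i^+$, which pins down $p_{ik}$ in terms of $\gamma_{ik}, \mu_{ik}$, and the consistency condition \eqref{eq.Acons} is exactly what guarantees this choice of $p_{ik}$ is multiplicatively antisymmetric ($p_{ik} p_{ki} = 1$). Having built the BRD $(R, \bt, \bsigma', \bp, \bH, \bJ)$, the induced map $A \to B$ sending $X_i^\pm$ to the generators of $B$ is a graded surjection, and again it is an isomorphism because both sides are free/principal of the same rank in each $\ZZ^n$-degree (for $A$, this is the ``consistency'' that the TGWA of type $(A_1)^n$ has the expected Hilbert series; for $B$ it is the direct-sum definition).

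\textbf{Main obstacle.} The conceptual content is routine, so the real work — and the main obstacle — is bookkeeping the scalars correctly: matching the shift conventions in $I_{\sigma_i}^{(k)}(H_i,J_i)$ (which splits $H_i$ to the negative side and $J_i$ to the positive side) against the asymmetric roles of $a_i$ versus $\sigma_i(a_i)$ in $X_i^- X_i^+ = a_i$, $X_i^+ X_i^- = \sigma_i(a_i)$, and then tracking how $p_{ik}$, $\mu_{ik}$, $\gamma_{ik}$ interrelate through all three families of cross relations simultaneously. I expect the verification of \eqref{eq.Acons} $\Leftrightarrow$ ($\bp$ multiplicatively antisymmetric) and the degree-by-degree rank count (to upgrade surjection to isomorphism) to be the two points requiring genuine care; the role of the scalar units hypothesis in part (1) — ensuring $\sigma_i$ scales $a_k$ by a \emph{scalar} rather than by a unit, so that $\gamma_{ik}$ is well-defined — should be highlighted explicitly.
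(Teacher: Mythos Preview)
Your overall strategy matches the paper's: in both directions one identifies $X_i^+ \leftrightarrow j_i t_i$ and $X_i^- \leftrightarrow \sigma_i^{-1}(h_i) t_i^{-1}$ and checks relations. Two points need correction.

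First, in Part (1) you assert that $\mu_{ik}$ is ``a monomial in the $p_{ik}$.'' This is wrong, and the error stems from conflating $\sigma_i(H_k)=H_k$ with $\sigma_i(h_k)=h_k$. Because $\sigma_i$ only preserves the \emph{ideals} $J_k, H_k$, it sends the chosen generators to unit multiples: $\sigma_i(j_k) = u_{ik} j_k$ and $\sigma_i(h_k) = v_{ik} h_k$ for some $u_{ik}, v_{ik} \in R^\times$. These units appear in both $\gamma_{ik} = u_{ik} v_{ik}$ and $\mu_{ik} = u_{ki} v_{ik} p_{ik}$, so the consistency check $\mu_{ik}\mu_{ki} = \gamma_{ik}\gamma_{ki}$ is a genuine identity among these units, not a formal one in the $p_{ik}$ alone. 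Relatedly, you misread the scalar-units hypothesis: it does not say units lie in $\kk$, it says each $\sigma_\ell$ \emph{fixes} every unit. That is exactly what lets you commute $u_{ik}, v_{ik}$ past the various $\sigma_\ell$ when verifying the cross relations; without introducing and tracking these units your computation cannot close.

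Second, to upgrade the surjection to an isomorphism you propose a degree-by-degree rank count. The paper instead constructs an explicit inverse $\psi: B \to A$, sending $j_i t_i \mapsto X_i^+$, $\sigma_i^{-1}(h_i) t_i^{-1} \mapsto X_i^-$, and checks that the skew-Laurent relations among these generators of $B$ hold in $A$. This is more robust: your rank count needs each principal ideal $I^{(\balpha)}$ to be free of rank one over $R$, i.e.\ needs the products of shifts of $h_i, j_i$ to be nonzerodivisors, which the BRD axioms do not guarantee; and it also requires citing the structure theorem for consistent TGWAs. The explicit inverse sidesteps both issues.

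For Part (2), your instinct to take $H_i = R$ and $J_i$ principal is correct, but the shift you propose is off by a sign: the paper takes $j_i = \sigma_i(a_i)$ (not $a_i$ or $\sigma_i^{-1}(a_i)$), so that $\sigma_i^{-1}(h_i j_i) = \sigma_i^{-1}(\sigma_i(a_i)) = a_i$ on the nose. With this choice one reads off $u_{ik} = \gamma_{ik}$, $v_{ik} = 1$, and $p_{ik} = \mu_{ik}\gamma_{ki}^{-1}$; your observation that \eqref{eq.Acons} is precisely the condition $p_{ik}p_{ki}=1$ is then correct.
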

\begin{proof}
(1)
Suppose that for all $k=1,\ldots,n$ we have $J_k=(j_k)$ and $H_k=(h_k)$. Since, for $i\neq k$, $\sigma_i$ leaves both ideals $J_k$, $H_k$ invariant, we have that the image of a generator needs to generate the ideal, hence
\[
\sigma_i(j_k) = u_{ik}j_k \qquad
\sigma_i(h_k) = v_{ik}h_k
\]
for all $i \neq k$ for some units $u_{ik},v_{ik}\in R^\times$. Since $B$ has scalar units, then $u_{ik},v_{ik}$ are invariant under $\sigma_\ell$ for all $\ell \in [n]$.

Set $a_k = \sigma_k\inv(h_kj_k)$. For $i \neq k$, set $\gamma_{ik}=u_{ik}v_{ik}$, and $\mu_{ik}=u_{ki}v_{ik}p_{ik}$. Then
\[ \sigma_i(a_k) =\sigma_i(\sigma_k\inv(h_kj_k))=\sigma_k\inv(\sigma_i(h_kj_k))=\sigma_k\inv(u_{ik}v_{ik}h_kj_k) = u_{ik}v_{ik} a_k = \gamma_{ik} a_k.\]
Define $A=A_\mu(R,\bsigma,\ba)$. We claim $A \iso B$.

Define a $\ZZ^n$-graded map $\phi:A \to B$ by $\phi(X_i^+)=j_it_i$ and $\phi(X_i^-)=\sigma_i\inv(h_i)t_i\inv$ for $i \in [n]$, and $\phi(r)=r$ for all $r \in R$. Then for $i,k \in [n]$, $i \neq k$, and $r \in R$, we have 
\begin{align*}
\phi(X_i^+)r - \phi(\sigma(r))\phi(X_i^+)
    &= (j_it_i)r - \sigma(r)(j_it_i) = 0 \\
\phi(X_i^-)r - \phi(\sigma(r))\phi(X_i^-)
    &= (\sigma_i\inv(h_i)t_i\inv) r - \sigma(r)(\sigma_i\inv(h_i)t_i\inv) = 0 \\
\phi(X_i^-)\phi(X_i^+) - \phi(a_i)
    &= (\sigma_i\inv(h_i)t_i\inv)(j_it_i) - \phi(a_i) = 0 \\
\phi(X_i^+)\phi(X_i^-) - \phi(\sigma(a_i))
    &= (j_it_i)(\sigma_i\inv(h_i)t_i\inv) - \phi(\sigma(a_i)) = 0 \\
\phi(X_i^+)\phi(X_k^-) - \mu_{ik}\phi(X_k^-)\phi(X_i^+)
        &= (j_it_i)(\sigma_k\inv(h_k)t_k\inv) 
            - u_{ki}v_{ik}p_{ik} (\sigma_k\inv(h_k)t_k\inv)(j_it_i) \\
        &= v_{ik}p_{ik} \sigma_k\inv(h_k)(j_it_k\inv) t_i
            - u_{ki}v_{ik}p_{ik} (\sigma_k\inv(h_k)t_k\inv)(j_it_i) 
            = 0 \\
\phi(X_i^+)\phi(X_k^+) - \gamma_{ik}\mu_{ik}\inv X_k^+X_i^+
        &= (j_it_i)(j_kt_k) - (u_{ik}v_{ik})(u_{ki}v_{ik}p_{ik})\inv (j_kt_k)(j_it_i) \\
        &= p_{ki}u_{ik} j_k(j_it_k)t_i - p_{ki}u_{ik}u_{ki}\inv (j_kt_k)(j_it_i) = 0.
\end{align*}
Finally,
\begin{align*}
\phi(X_k^-)\phi(X_i^-)&-\gamma_{ik}\mu_{ki}\inv \phi(X_i^-)\phi(X_k^-) \\
        &= (\sigma_k\inv(h_k)t_k\inv)(\sigma_i\inv(h_i)t_i\inv)
            - (u_{ik}v_{ik})(u_{ik}v_{ki}p_{ki})\inv (\sigma_i\inv(h_i)t_i\inv)(\sigma_k\inv(h_k) t_k\inv) \\
        &= p_{ik} v_{ki}\inv \sigma_i\inv(h_i)(\sigma_k\inv(h_k)t_i\inv) t_k\inv 
            - p_{ik} v_{ki}\inv v_{ik} (\sigma_i\inv(h_i)t_i\inv)(\sigma_k\inv(h_k) t_k\inv) 
        = 0.
\end{align*}
Note that this TGWA is consistent since
\[
\mu_{ik}\mu_{ki} = (u_{ki}v_{ik}p_{ik})(u_{ik}v_{ki}p_{ki})
    = (u_{ik}v_{ik})(u_{ki}v_{ki})
    = \gamma_{ik}\gamma_{ki}.
\]

Since $B$ is generated as an algebra by $R$ and $\{j_i t_i, \sigma\inv(h_i)t_i\inv \mid i \in [n]\}$, we can define an inverse map $\psi:B \to A$ by $\psi(j_it_i) = X_i^+$ and $\psi(\sigma_i\inv(h_i)t_i\inv) = X_i^-$ for all $i \in [n]$, and $\psi(r) = r$ for all $r \in R$. Since $B$ is a subalgebra of the skew Laurent ring $R_{\bp}[\bt^{\pm 1}; \bsigma]$, we need only verify that $\psi$ preserves the relations in the skew Laurent ring (between the generators of $B$). Note for all $i,k \in [n]$ with $i\neq k$ and $r \in R$, we have
\begin{align*}
\psi(j_it_i)\psi(r) - \psi(\sigma_i(r))\psi(j_it_i)
    &= X_i^+ r - \sigma_i(r)X_i^+ = 0 \\
\psi(\sigma\inv(h_i)t_i\inv)\psi(r) - \psi(\sigma_i\inv(r))\psi(\sigma\inv(h_i)t_i\inv)
    &= X_i^- r - \sigma_i\inv(r)X_i^- = 0 \\
\psi(j_it_i)\psi(\sigma\inv(h_i)t_i\inv) - \psi(\sigma_i(a_i))
    &= X_i^+X_i^- - \sigma_i(a_i) = 0 \\
\psi(\sigma\inv(h_i)t_i\inv)\psi(j_it_i) - \psi(a_i)
    &= X_i^-X_i^+ - a_i = 0 \\
\psi(j_it_i)\psi(\sigma\inv(h_k)t_k\inv) - (\gamma_{ki} p_{ik})\psi(\sigma\inv(h_k)t_k\inv)\psi(\sigma_i(a_i)t_i)
    &= X_i^+X_k^- - \mu_{ik} X_k^- X_i^+ = 0 \\
\psi(j_it_i)\psi(j_k t_k) - (\gamma_{ik}\gamma_{ki}\inv p_{ki})\psi(j_k t_k)\psi(j_i t_i)
    &= X_i^+X_k^+ - \mu_{ik}\inv \gamma_{ik} X_k^+ X_i^+ = 0 \\
\psi(\sigma\inv(h_k)t_k\inv)\psi(\sigma\inv(h_i)t_i\inv) - p_{ik} \psi(\sigma\inv(h_i)t_i\inv)\psi(\sigma\inv(h_k)t_k\inv)
    &= X_k^-X_i^- - \gamma_{ik}\mu_{ki}\inv X_i^- X_k^- = 0.
\end{align*}
Hence, $\psi$ extends to an algebra homomorphism $B \to A$. Furthermore, it is clear that $\phi$ and $\psi$ are inverses.

(2) 
For $i \in [n]$, set $H_i = R$ and $J_i = (j_i)$ where $j_i=\sigma_i(a_i)$. Then $u_{ik}=\gamma_{ik}$ and $p_{ik}=\mu_{ik}\gamma_{ki}\inv$. Let $B=R_\bp(\bt,\bsigma,\bH,\bJ)$ be the BR algebra defined by this data. The above arguments show that $A \iso B$.
\end{proof}

\subsection{Invariant rings}

We now discuss invariant rings under certain automorphisms on higher rank BR algebras. This generalizes \cite[Proposition 2.13]{GRW1}. In light of the above, this also generalizes \cite[Theorem 4.5]{GR} in the case that the base ring $R$ is commutative.

\begin{lemma}\label{lem.induced}
Let $R$ be a commutative algebra. Let $B=R_\bp(\bt,\bsigma,\bH,\bJ)$ and $B'=R_{\bp'}'(\bt',\bsigma',\bH',\bJ')$ be BR algebras. Suppose $\phi:R \sto R'$ is a $\kk$-algebra homomorphism such that for each $i \in [n]$, $\phi(H_i) \subseteq H_i'$, $\phi(J_i)\subseteq J_i'$, and $\phi\sigma_i=\sigma_i'\phi$.
Then for each $\bgamma \in (\kk^{\times})^n$ there is a graded algebra morphism $\Phi_{\bgamma}:B \sto B'$ satisfying 
$\Phi_{\bgamma}(r t_i) = \gamma_i\phi(r) t_i'$ for all $r \in J_i$ and $\Phi_{\bgamma}(r t_i\inv) = \gamma_i\inv\phi(r) (t_i')\inv$ for all $r \in \sigma_i\inv(H_i)$
such that the following diagram commutes:
\[\begin{tikzcd}        
    R \arrow{r}{\phi} \arrow{d}{\iota} & R'  \arrow{d}{\iota'} \\
    B \arrow{r}{\Phi_\bgamma} & B' 
\end{tikzcd}\]
Furthermore, we have the following:
\begin{enumerate}
    \item If $\Phi_{\bgamma}$ is injective (resp. surjective), then $\phi$ is injective (resp. surjective).
    \item If $R'$ is a domain and $\phi$ is injective, then $\Phi_{\bgamma}$ is injective.
    \item If $\phi(H_i)=H_i'$ and $\phi(J_i)=J_i'$ for all $i\in[n]$, and $\phi$ is surjective, then $\Phi_{\bgamma}$ is surjective.
\end{enumerate}
\end{lemma}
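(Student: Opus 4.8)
The plan is to realize $\Phi_\bgamma$ as the restriction to $B$ of a homomorphism of the ambient skew Laurent rings. First I would extend $\phi$ to a $\kk$-algebra map $\widetilde\phi\colon R_\bp[\bt^{\pm1};\bsigma]\to R'_{\bp'}[\bt^{\pm1};\bsigma']$ determined by $\widetilde\phi|_R=\phi$ and $\widetilde\phi(t_i)=\gamma_i t_i'$. By the universal property of the iterated skew Laurent extension this is legitimate once the images satisfy the defining relations: $t_i r=\sigma_i(r)t_i$ is respected because $\phi\sigma_i=\sigma_i'\phi$, and $t_kt_i=p_{ik}t_it_k$ is respected once $\bp=\bp'$ (which we assume; note that the scalars $\gamma_i$ cancel out of this relation, so a mismatch between $\bp$ and $\bp'$ cannot be absorbed and the hypothesis is genuinely needed here). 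Next I would check $\widetilde\phi(B)\subseteq B'$: since $\widetilde\phi(I^{(\balpha)}\bt^\balpha)=\phi(I^{(\balpha)})\bigl(\prod_i\gamma_i^{\alpha_i}\bigr)(\bt')^\balpha$, it suffices to prove $\phi(I^{(\balpha)})\subseteq I'^{(\balpha)}$, and this follows from the elementary inclusions $\phi(\sigma_i^m(H_i))=\sigma_i'^m(\phi(H_i))\subseteq\sigma_i'^m(H_i')$ and $\phi(\sigma_i^m(J_i))\subseteq\sigma_i'^m(J_i')$, together with the fact that $\phi(XY)\subseteq X'Y'$ whenever $\phi(X)\subseteq X'$ and $\phi(Y)\subseteq Y'$, applied to the defining products of $I_i^{(k)}$ and to $I^{(\balpha)}=\prod_i I_i^{(\alpha_i)}$.

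Then $\Phi_\bgamma:=\widetilde\phi|_B$ is the desired map: it is $\ZZ^n$-graded since $\widetilde\phi$ preserves $\bt$-degree, and the prescribed formulas $\Phi_\bgamma(rt_i)=\gamma_i\phi(r)t_i'$, $\Phi_\bgamma(rt_i\inv)=\gamma_i\inv\phi(r)(t_i')\inv$ together with the commuting square are read off directly from the definition of $\widetilde\phi$. (Alternatively one can build $\Phi_\bgamma$ on the algebra generators $R,\ J_it_i,\ \sigma_i\inv(H_i)t_i\inv$ of $B$ supplied by Lemma~\ref{lem.BRgen}, verifying that the images satisfy the skew Laurent relations among those generators, exactly as for the map $\psi$ in the proof of Theorem~\ref{thm.tgwa}.) For (1): the square $\iota'\circ\phi=\Phi_\bgamma\circ\iota$, with $\iota$ and $\iota'$ the injective inclusions of the degree-$\bzero$ components $R\hookrightarrow B$, $R'\hookrightarrow B'$, shows injectivity of $\Phi_\bgamma$ forces injectivity of $\phi$; and if $\Phi_\bgamma$ is surjective then, given $r'\in R'=B'_{\bzero}$, any $b\in B$ with $\Phi_\bgamma(b)=r'$ has (by gradedness of $\Phi_\bgamma$) degree-$\bzero$ component $b_\bzero\in B_\bzero=R$ with $\phi(b_\bzero)=r'$, so $\phi$ is surjective. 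For (2): $R'$ a domain makes $R'_{\bp'}[\bt^{\pm1};\bsigma']$ a domain, and $\ker\Phi_\bgamma$ is a graded ideal, so it is enough to show a homogeneous element $c\,\bt^\balpha$ with $c\in I^{(\balpha)}$ is killed only when it is $0$; but $\Phi_\bgamma(c\bt^\balpha)=\bigl(\prod_i\gamma_i^{\alpha_i}\bigr)\phi(c)(\bt')^\balpha$ vanishes iff $\phi(c)=0$ (as $(\bt')^\balpha$ is a unit), whence $c=0$ since $\phi$ is injective. For (3): under $\phi(H_i)=H_i'$, $\phi(J_i)=J_i'$ for all $i$ and $\phi$ surjective, the generating set $R',\ J_i't_i',\ \sigma_i'^{-1}(H_i')(t_i')\inv$ of $B'$ furnished by Lemma~\ref{lem.BRgen} lies in $\im\Phi_\bgamma$ (for instance $J_i't_i'=\phi(J_i)t_i'=\gamma_i\inv\,\Phi_\bgamma(J_it_i)$, and similarly for the other two families), so $\Phi_\bgamma$ is onto.

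The only genuinely delicate point is the existence of $\Phi_\bgamma$ itself, which rests on the equivariance $\phi\sigma_i=\sigma_i'\phi$ (for the $R$-side relations), the matching $\bp=\bp'$ (for the $t$-side relations), and the ideal inclusions $\phi(I^{(\balpha)})\subseteq I'^{(\balpha)}$; once $\Phi_\bgamma$ is constructed, parts (1)--(3) are immediate from the $\ZZ^n$-grading and, for (2), from the invertibility of $(\bt')^\balpha$ in the skew Laurent ring over the domain $R'$.
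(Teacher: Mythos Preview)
Your proof is correct and follows essentially the same approach as the paper: construct the map at the level of the ambient skew Laurent rings and then restrict to $B$, using Lemma~\ref{lem.BRgen} for part~(3). You are in fact slightly more careful than the paper in two places: you explicitly verify the $t_kt_i=p_{ik}t_it_k$ relation (which forces $\bp=\bp'$, a hypothesis the paper's statement suppresses by writing $\bp'$), and you spell out the inclusion $\phi(I^{(\balpha)})\subseteq I'^{(\balpha)}$ rather than leaving it implicit.
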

\begin{proof}
Define $\Phi_{\bgamma}: R_{\bp}[\bt^{\pm 1};\bsigma] \sto R_{\bp'}'[(\bt')^{\pm 1};\bsigma']$ by $\Phi_{\bgamma}(t_i^{\pm 1}) = \gamma_i^{\pm 1} (t_i')^{\pm 1}$, and $\Phi_{\bgamma}(r) = \phi(r)$ for all $r \in R$ and all $i \in [n]$.
Then for all $r \in R$ and $i \in [n]$,
\[
\Phi_{\bgamma}(t_ir - \sigma_i(r)t_i) =
\Phi_{\bgamma}(t_i)\Phi_{\bgamma}(r)-\Phi_{\bgamma}(\sigma_i(r))\Phi_{\bgamma}(t_i)
    = \gamma_i t_i'\phi(r) - \phi(\sigma_i(r)) \gamma t_i'
    = \gamma_i (\sigma_i'\phi-\phi\sigma_i)(r)t_i'
    = 0.
\]
Hence, $\Phi_{\bgamma}$ can be extended linearly and multiplicatively to an algebra homomorphism $R_{\bp}[\bt^{\pm 1};\bsigma] \sto R_{\bp'}'[(\bt')^{\pm 1};\bsigma']$. Since $\phi(H_i) \subseteq H_i$ and $\phi(J_i) \subseteq J_i$ for all $i \in [n]$, then it is clear that $\Phi_{\bgamma}$ restricts to a homomorphism $B \sto B'$. By abuse of notation we call the restriction $\Phi_{\bgamma}$, as well. By construction, $\Phi_{\bgamma}$ respects the $\ZZ^n$-grading, so is a graded algebra map.

(1) Since $\Phi_{\bgamma}$ and $\phi$ agree on $R$, then it is clear that when $\Phi_{\bgamma}$ injective/surjective, then so is $\phi$. 

(2) Suppose $R'$ is a domain and $\phi$ is injective. Let $at^\balpha \in \ker\Phi_{\bgamma}$. Then $0=\Phi_\bgamma(at^\balpha) = \bgamma^\balpha \phi(a) (t')^\balpha$. Since $B'$ is a domain by Lemma \ref{lem.domain}, then $\phi(a)=0$. Thus, $a=0$.

(3) Suppose $\phi(H_i)=H_i'$ and $\phi(J_i)=J_i'$ for all $i\in[n]$, and $\phi$ is surjective. To show that $\Phi_{\bgamma}$ is surjective, it suffices to show that the image contains $B'_{\bzero}$ and $B_{\pm\be_i}'$ for all $i$, as these generate $B'$ as an algebra, by Lemma \ref{lem.BRgen}. If $a' \in B_{\bzero}' = R'$, since $\phi$ is surjective, there exists $a \in B_\bzero$ such that $a' = \phi(a) = \Phi_{\bgamma}(a)$. Further, if $a't_i' \in B'_{\be_i}$, then $a' \in J_i$ and so by hypothesis, there exists $a \in J_i$ such that $\phi(a) = a'$. Then $\Phi_{\bgamma}(\gamma_i\inv at_i) = a't_i'$. Analogously, $B'_{-\be_i}$ is in the image of $\Phi_{\bgamma}$, and so $\Phi_{\bgamma}$ is surjective.
\end{proof}

\begin{theorem}\label{thm.fixed}
Let $R$ be a commutative domain and let
$B=R_\bp(\bt,\bsigma,\bH,\bJ)$ be a BR algebra.
Let $\phi$ be an automorphism of $R$ such that $\phi\sigma_i=\sigma_i\phi$ for all $i \in [n]$. Let $\bgamma \in (\kk^\times)^n$ and extend $\phi$ to an automorphism $\Phi_\bgamma$ of $R_{\bp}[\bt^{\pm 1};\bsigma]$. Let $m_i$ be the order of $\gamma_i$ and suppose $\ord(\phi),m_1,\hdots,m_n$ are pairwise coprime. Set $\bq=(q_{ik})$ where $q_{ik} = p_{ik}^{m_i m_k}$, $\bs=(t_i^{m_i})$, and $\btau=(\sigma_i^{m_i})$.
\begin{enumerate}
\item
$(R_{\bp}[\bt^{\pm 1};\bsigma])^{\grp{\Phi_\gamma}} = (R^{\grp{\phi}})_{\bq}[\bs^{\pm 1};\btau]$.

\item Suppose $\phi(J_i) \subset J_i$ and $\phi(H_i) \subset H_i$ for each $i$.
For each $i$, set
\begin{align*}
    K_i &= \left(\sigma_i\inv(H_i)\sigma_i^{-2}(H_i) \cdots \sigma_i^{-m_i}(H_i) \right) \cap R^\grp{\phi} \\
    L_i &= \left( J_i\sigma_i(J_i) \cdots \sigma_i^{m_i-1}(J_i) \right) \cap R^\grp{\phi}.
\end{align*}
Then 
\[ B^\grp{\Phi_\bgamma} = (R^{\grp{\phi}})_\bq\left(\bs,\btau,\bK,\bL \right).\]
\end{enumerate}
\end{theorem}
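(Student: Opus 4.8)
The plan is to prove both parts by exploiting that $\Phi_\bgamma$ is a \emph{graded} automorphism, so that fixed rings may be computed one homogeneous component at a time.

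For part (1), write $T = R_\bp[\bt^{\pm 1};\bsigma]$ and $T_\balpha = R\bt^\balpha$; then $\Phi_\bgamma$ preserves each $T_\balpha$, so $T^{\grp{\Phi_\bgamma}} = \bigoplus_\balpha (T_\balpha)^{\grp{\Phi_\bgamma}}$. Since $\Phi_\bgamma(r\bt^\balpha) = \lambda_\balpha\,\phi(r)\,\bt^\balpha$ with $\lambda_\balpha := \prod_i \gamma_i^{\alpha_i}$, such an element is fixed iff $\phi(r) = \lambda_\balpha\inv r$. I would first show $(T_\balpha)^{\grp{\Phi_\bgamma}} = 0$ unless $m_i \mid \alpha_i$ for all $i$: if $0 \neq r$ with $\phi(r) = \lambda_\balpha\inv r$, then iterating $\phi$ and using $\phi^{\ord(\phi)} = \mathrm{id}$ in the domain $R$ forces $\lambda_\balpha^{\ord(\phi)} = 1$; but $\lambda_\balpha$ is a product of the $\gamma_i^{\alpha_i}$, whose orders divide the pairwise coprime $m_i$, so $\ord(\lambda_\balpha) = \prod_i \ord(\gamma_i^{\alpha_i})$ is coprime to $\ord(\phi)$, hence $\lambda_\balpha = 1$, and then pairwise coprimality of the $m_i$ gives $\gamma_i^{\alpha_i} = 1$, that is, $m_i \mid \alpha_i$, for each $i$. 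When all $m_i \mid \alpha_i$ one has $\lambda_\balpha = 1$ and $(T_\balpha)^{\grp{\Phi_\bgamma}} = R^{\grp{\phi}}\bt^\balpha$. Writing $\alpha_i = m_i\beta_i$ and $\bt^\balpha = \bs^\bbeta$, this yields $T^{\grp{\Phi_\bgamma}} = \bigoplus_\bbeta R^{\grp{\phi}}\bs^\bbeta$. Finally, $\phi\sigma_i = \sigma_i\phi$ shows each $\tau_i = \sigma_i^{m_i}$ restricts to $R^{\grp{\phi}}$, and inside $T$ one checks $s_i r = \tau_i(r)s_i$ for $r \in R^{\grp{\phi}}$ and $s_k s_i = q_{ik} s_i s_k$; so the obvious graded map $(R^{\grp{\phi}})_\bq[\bs^{\pm 1};\btau] \to T^{\grp{\Phi_\bgamma}}$ is well defined, onto by the displayed decomposition, and injective because the $\bs^\bbeta$ are left $R^{\grp{\phi}}$-independent in $T$.

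For part (2), note first that since $\phi$ has finite order and $\phi(J_i) \subseteq J_i$, $\phi(H_i) \subseteq H_i$, iterating gives $\phi(J_i) = J_i$ and $\phi(H_i) = H_i$; together with $\phi\sigma_i = \sigma_i\phi$ this shows $\phi$ stabilizes every $I^{(\balpha)}$, so $\Phi_\bgamma$ restricts to an automorphism of $B$ and $B^{\grp{\Phi_\bgamma}} = B \cap T^{\grp{\Phi_\bgamma}} = B \cap (R^{\grp{\phi}})_\bq[\bs^{\pm 1};\btau]$ by part (1). Comparing homogeneous components as before, the degree-$\balpha$ part of this intersection vanishes unless $\alpha_i = m_i\beta_i$ for all $i$, and equals $(I^{(\balpha)} \cap R^{\grp{\phi}})\bs^\bbeta$ otherwise, so
\[
B^{\grp{\Phi_\bgamma}} = \bigoplus_{\bbeta \in \ZZ^n} \bigl( I^{(m_1\beta_1,\dots,m_n\beta_n)} \cap R^{\grp{\phi}} \bigr)\,\bs^\bbeta .
\]
The theorem therefore reduces to the ideal identity $I^{(m_1\beta_1,\dots,m_n\beta_n)} \cap R^{\grp{\phi}} = \prod_{i=1}^n I_{\tau_i}^{(\beta_i)}(K_i,L_i)$ for all $\bbeta$, the right side being the degree-$\bbeta$ ideal of $(R^{\grp{\phi}})_\bq(\bs,\btau,\bK,\bL)$.

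The inclusion $\supseteq$ of this identity is routine: $\sigma_i$ preserves $R^{\grp{\phi}}$, so unwinding the definitions of $K_i$ and $L_i$ as intersections shows each factor $I_{\tau_i}^{(\beta_i)}(K_i,L_i)$ lies in both $R^{\grp{\phi}}$ and $I_{\sigma_i}^{(m_i\beta_i)}(H_i,J_i)$, and products of ideals respect inclusions. The inclusion $\subseteq$ is the crux and the step I expect to be hardest: it asserts that a $\phi$-invariant element of the product $\prod_i I_{\sigma_i}^{(m_i\beta_i)}(H_i,J_i)$ can already be written through the ``$\cap R^{\grp{\phi}}$'' factors. I would prove it by induction on $n$ via Lemma \ref{lem.iter}: realizing $B$ as a rank-one BR algebra over the rank-$(n-1)$ BR algebra $S$, one has that $\Phi_\bgamma$ restricts to $S$ (being graded and fixing the relevant ideals setwise), so the rank-one case --- a suitably base-ring-general version of \cite[Proposition 2.13]{GRW1} --- yields $B^{\grp{\Phi_\bgamma}}$ as a rank-one BR algebra over $S^{\grp{\Phi_{\bgamma'}}}$, the inductive hypothesis identifies $S^{\grp{\Phi_{\bgamma'}}}$, and one then matches $\bq$, $\btau$, $\bK$, $\bL$ against the data supplied by Lemma \ref{lem.iter}. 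The bookkeeping I anticipate being delicate is synchronizing the telescoping of the shifts $\sigma_i^{\pm m_i k}$ with the definitions of $K_i$ and $L_i$, and verifying the rank-one statement in the generality needed for the (noncommutative) base $S$; it is here that the pairwise coprimality hypotheses do the real work.
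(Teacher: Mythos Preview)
Your approach to part~(1) is essentially the paper's: both observe that $\Phi_\bgamma$ is graded, write the fixed condition on $r\bt^\balpha$ as $\phi(r)=\bgamma^{-\balpha}r$, and use pairwise coprimality to force $m_i\mid\alpha_i$ and $r\in R^{\grp{\phi}}$. The paper phrases the coprimality step via the orbit size $|\orb_\phi(r)|$ dividing both $m_1\cdots m_n$ and $\ord(\phi)$, while you phrase it via $\ord(\lambda_\balpha)$; these amount to the same argument.

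For part~(2) you and the paper coincide through the componentwise description
\[
(B^{\grp{\Phi_\bgamma}})_\bbeta \;=\; \bigl(I^{(m_1\beta_1,\dots,m_n\beta_n)}\cap R^{\grp{\phi}}\bigr)\,\bs^\bbeta,
\]
obtained as $B\cap T^{\grp{\Phi_\bgamma}}$. At this point the paper's proof simply \emph{ends}: it records that the coefficient of $\bs^\bbeta$ lies in $I^{(\balpha)}\cap R^{\grp{\phi}}$ and does not separately verify the ideal identity
\[
I^{(m_1\beta_1,\dots,m_n\beta_n)}\cap R^{\grp{\phi}} \;=\; \prod_{i=1}^n I_{\tau_i}^{(\beta_i)}(K_i,L_i)
\]
that you correctly isolate as the remaining content. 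Your proposed inductive reduction via Lemma~\ref{lem.iter} to the rank-one case of \cite{GRW1} is therefore additional work beyond what the paper carries out; the paper treats the identification with the BR data $(\bK,\bL)$ as immediate from the componentwise description, in effect reading off $K_i$ and $L_i$ from the $\pm\be_i$ components and stopping there. So your plan is not a different route so much as a more scrupulous one on this point---though the induction over the noncommutative base $S$ that you anticipate needing is genuinely more machinery than the paper invokes, and your worry about whether the hard inclusion $\subseteq$ really holds in full generality is not addressed by the paper's argument either.
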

\begin{proof}
(1) The inclusion $\supset$ is clear. It is also clear that $\Phi_\bgamma$ preserves the $\ZZ^n$-grading on $A=R_{\bp}[\bt^{\pm 1};\bsigma]$. Let $rt^\balpha \in A^{\grp{\Phi_\bgamma}}$ with $r \in R$, $\balpha \in \ZZ^n$. Since $rt^\balpha = \Phi_\bgamma(rt^\balpha) = \bgamma^\balpha\phi(r) t^\balpha$, then because $R$ (and hence $B$) is a domain by Lemma \ref{lem.domain}, we have $\phi(r) = \bgamma^{-\balpha}r$. Set $m=m_1\cdots m_n$. Then 
\[ \phi^m(r) = (\bgamma^{-\balpha})^m r = (\bgamma^m)^{-\balpha}r = r.\] 
Hence, $\left|\orb_\phi(r)\right|$ divides $m$, but $\left|\orb_\phi(r)\right|$ divides $\ord(\phi)$ by the Orbit-Stabilizer Theorem. This is a contradiction unless $m_i \mid \alpha_i$ for each $i$. Hence, $\alpha_i=\ell_i m_i$. Thus, $r \in R^\grp{\phi}$.

(2) The hypotheses imply that $\Phi_\bgamma(B) \subset B$. Hence, \[B^\grp{\Phi_\bgamma} = R_{\bp}[\bt^{\pm 1};\bsigma]^{\grp{\Phi_\bgamma}} \cap B.\]
Since $B^\grp{\Phi_\bgamma}$ inherits the $\ZZ^n$-grading on $R_{\bp}[\bt^{\pm 1};\bsigma]$ and $B$, then $(B^\grp{\Phi_\bgamma})_\bzero = R^{\grp{\phi}}$. Let $\balpha \in \ZZ^n$. If $(R_{\bp}[\bt^{\pm 1};\bsigma]^{\grp{\Phi_\bgamma}})_\balpha \neq 0$, then for each $i \in [n]$, $\alpha_i = \beta_i m_i$ for some $\beta_i \in \ZZ$. Since $B^\grp{\Phi_\bgamma} \subset B$, then the coefficient of $\bs^\bbeta$ lies in $I^{\balpha} \cap R^{\grp{\phi}}$.
\end{proof}

\subsection{GK dimension}

An automorphism $\sigma$ of an algebra $R$ is said to be \emph{locally algebraic} if for every $r$, the set $\{ \sigma^n(r) : n \in \NN\}$ is contained in a finite-dimensional subspace of $R$. The following lemma is likely well known, but because we were unable to find a reference, we provide a short proof for the benefit of the reader.

\begin{lemma}\label{lem.gkdim1}
Let $R_\bp[\bt^{\pm 1};\bsigma]$ be an iterated skew Laurent extension over $R$ of rank $n$. Suppose each $\sigma_i$ is a locally algebraic automorphism of $R$. Then
$\GKdim R_\bp[\bt^{\pm 1};\bsigma] = \GKdim R + n$.
\end{lemma}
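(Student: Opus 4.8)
The plan is to prove this by induction on $n$, using the rank-one case as the base and an iterated skew Laurent construction for the inductive step, appealing to the standard result that for a single skew Laurent extension $R[t^{\pm1};\sigma]$ with $\sigma$ locally algebraic one has $\GKdim R[t^{\pm1};\sigma] = \GKdim R + 1$ (this is classical; see e.g. work of Bell or the treatment in Krause--Lenagan, and it follows from a filtration argument). So the first thing I would do is reduce to the rank-one statement. Since $R_\bp[\bt^{\pm1};\bsigma]$ can be built as $\left(R_{\bp'}[(\bt')^{\pm1};\bsigma']\right)[t_n^{\pm1};\widehat{\sigma_n}]$, where $\bt' = (t_1,\dots,t_{n-1})$ and $\widehat{\sigma_n}$ extends $\sigma_n$ by $\widehat{\sigma_n}(t_i) = p_{in}t_i$ for $i < n$ (exactly as in the proof of Lemma~\ref{lem.iter}), it suffices to check that $\widehat{\sigma_n}$ is again locally algebraic on $R_{\bp'}[(\bt')^{\pm1};\bsigma']$, and then apply the rank-one result together with the inductive hypothesis $\GKdim R_{\bp'}[(\bt')^{\pm1};\bsigma'] = \GKdim R + (n-1)$.

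The key steps, in order, are: (i) state and invoke the rank-one skew Laurent GK-dimension identity for a locally algebraic automorphism; (ii) set up the iterated description of $R_\bp[\bt^{\pm1};\bsigma]$ as a rank-one skew Laurent extension of the rank-$(n-1)$ extension; (iii) verify that $\widehat{\sigma_n}$ is locally algebraic — given $x = \sum r_{\balpha}(\bt')^{\balpha}$ in the subring (a finite sum), each orbit $\{\widehat{\sigma_n}^k(x)\}$ has $\widehat{\sigma_n}^k(x) = \sum \left(\prod_{i<n} p_{in}^{k\alpha_i}\right)\sigma_n^k(r_{\balpha})(\bt')^{\balpha}$, and since each $\sigma_n$ is locally algebraic the $\sigma_n^k(r_{\balpha})$ lie in a fixed finite-dimensional subspace $V_{\balpha} \subseteq R$; hence all $\widehat{\sigma_n}^k(x)$ lie in the finite-dimensional space $\bigoplus_{\balpha \in \supp(x)} V_{\balpha}(\bt')^{\balpha}$ (note the scalars $\prod p_{in}^{k\alpha_i}$ contribute nothing to the dimension count); (iv) conclude by the rank-one identity and induction.

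I expect the main obstacle to be purely bookkeeping rather than conceptual: one must be a little careful that "locally algebraic" is being applied to $\widehat{\sigma_n}$ acting on a \emph{non-commutative} ring that is not finitely generated as a module over anything convenient, but since local algebraicity is a statement about individual elements and the formula for $\widehat{\sigma_n}^k$ on a homogeneous (in the $\bt'$-grading) piece is transparent, this goes through. A secondary point worth a sentence is the legitimacy of the rank-one black box: one should either cite it or note that it follows from choosing a generating subspace for $R$ stable under $\sigma$ (possible by local algebraicity) and estimating the growth of the standard filtration of $R[t^{\pm1};\sigma]$, which matches that of $R[t^{\pm1}]$, i.e.\ adds exactly $1$ to the GK dimension; the lower bound $\GKdim R[t^{\pm1};\sigma] \geq \GKdim R + 1$ is automatic since $R[t^{\pm1};\sigma]$ contains the commutative Laurent polynomial ring $\kk[t^{\pm1}]$ and contains $R$ as a subalgebra with $t$ a normal regular element. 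Assembling these gives the claimed equality.
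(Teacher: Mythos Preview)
Your proof is correct and follows the same strategy as the paper: induct on $n$, extend $\sigma_n$ to the rank-$(n-1)$ ring via $t_i \mapsto p_{in}t_i$, and verify local algebraicity of the extension by exactly the monomial computation you sketch. The only packaging difference is that the paper iterates skew \emph{polynomial} extensions $T = R[t_1;\sigma_1]\cdots[t_n;\sigma_n]$ (applying \cite[Proposition~1]{LMO} at each stage) and then localizes once at the end via \cite[Theorem~2]{LMO} to reach the Laurent ring, whereas you iterate Laurent extensions directly and invoke the rank-one Laurent identity at each step; both routes work.
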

\begin{proof}
The case $n=1$ is \cite[Proposition 1]{LMO}. We proceed inductively.

For each $k$, extend $\sigma_k$ to an automorphism of $R[t_1;\sigma_1]\cdots[t_{k-1};\sigma_{k-1}]$ by defining $\sigma_k(t_i) = p_{ik} t_i$ for all $i < k$.
Consider a homogeneous element $rt^\balpha \in S = R[t_1;\sigma_1]\cdots[t_{k-1};\sigma_{k-1}]$. 
Since $\sigma_k$ is a locally algebraic automorphism of $R$, then there is a finite-dimensional subspace $V$ of $R$ such that $\{ \sigma_k^n(r) : n \in \NN\} \subset V$. 
Choose a basis $\{v_1, v_2, \dots, v_d\}$ of $V$. 
Let $V' \subset S$ be the $\kk$-span of $\{v_1 t^\balpha, \hdots, v_d t^\balpha\}$. Since $\sigma_k$ scales $t^\balpha$, then it is clear that $\{ \sigma_k^n(rt^\balpha) : n \in \NN\} \subset V'$.

Now suppose that $s \in S$ and write $s = \sum_{\bgamma \in \Gamma} r_\bgamma t^\bgamma$ for some finite set $\Gamma \subseteq \NN^{k-1}$. 
By the above, for each $\bgamma \in \Gamma$, we have
$\{ \sigma^n(r_\bgamma t^\bgamma) : n \in \NN\} \subset V_\bgamma$ where $V_\bgamma$ is some finite-dimensional subspace of $S$. But then $\{ \sigma^n(s) : n \in \NN\} \subset \bigoplus_{\bgamma \in \Gamma} V_\bgamma$, which is finite-dimensional. Hence $\sigma_k$ is a locally algebraic automorphism of $S$ so $\GKdim S[t_{k},\sigma_{k}] = \GKdim S + 1$ by \cite[Proposition 1]{LMO}. 

It follows that each $\sigma_i$ is locally algebraic on $T=R[t_1;\sigma_1] \cdots [t_n;\sigma_n]$. Now let $X$ be the multiplicative sets generated by the $t_i$. By \cite[Theorem 2]{LMO}, 
\[ 
\GKdim R_\bp[\bt^{\pm 1};\bsigma] 
    = \GKdim TX\inv = \GKdim T = \GKdim R +n.\]
This proves the result.
\end{proof}

\begin{theorem}\label{thm.gkdim}
Let $B=R_{\bp}(\bt, \bsigma, \bH, \bJ)$ be a BR algebra of rank $n$. Suppose that each $\sigma_i$ is a locally algebraic automorphism of $R$. Then $\GKdim R\leq\GKdim B \leq \GKdim R + n$. Moreover, if each $J_i$ (resp. each $H_i$) contains a normal element which is not a zero divisor, then $\GKdim B = \GKdim R + n$. In particular, this holds if $R$ is a commutative domain.
\end{theorem}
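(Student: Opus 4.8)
The plan is to get $\GKdim R \le \GKdim B$ and $\GKdim B \le \GKdim R + n$ for free from monotonicity of GK dimension, and then to upgrade the second bound to an equality under the normality hypothesis. For the cheap bounds: $R = B_\bzero$ is a subalgebra of $B$, so $\GKdim R \le \GKdim B$; and by Lemma~\ref{lem.BRgen}, $B$ is a subalgebra of $R_\bp[\bt^{\pm1};\bsigma]$, whose GK dimension equals $\GKdim R + n$ by Lemma~\ref{lem.gkdim1} (this is where local algebraicity of each $\sigma_i$ enters), so $\GKdim B \le \GKdim R + n$.

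For the lower bound $\GKdim B \ge \GKdim R + n$, suppose each $J_i$ contains a normal non-zero-divisor $c_i$; the case where each $H_i$ instead contains such an element is symmetric, replacing $c_i t_i$ below by $\sigma_i^{-1}(d_i)t_i^{-1}$ for a normal non-zero-divisor $d_i \in H_i$. Put $y_i := c_i t_i \in J_i t_i = B_{\be_i}$ and let $C = \langle R, y_1, \dots, y_n\rangle \subseteq B$; note $y_i R \subseteq R y_i$ since $c_i$ is normal and $\sigma_i$ is surjective. The key observation is that for $\be = (e_1, \dots, e_n) \in \NN^n$, pushing all the $t$'s to the right gives $m_\be := y_1^{e_1}\cdots y_n^{e_n} = \rho_\be\, t_1^{e_1}\cdots t_n^{e_n}$, where $\rho_\be \in R$ is a product of automorphic images of the $c_i$ and hence a non-zero-divisor. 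Thus left multiplication by $m_\be$ is injective on $R$, the subspaces $R m_\be$ (for $\be \in \NN^n$) lie in pairwise distinct $\ZZ^n$-graded components of $R_\bp[\bt^{\pm1};\bsigma]$ so that $\sum_\be R m_\be$ is direct, and all of the $R m_\be$ lie inside $C$.

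From here I would run the standard Gelfand--Kirillov estimate. Fix a finite-dimensional generating subspace $V \ni 1$ of $R$ and set $W = V + \sum_i \kk y_i$, a finite-dimensional generating subspace of $C$. For each $\be$ with $|\be| := e_1 + \cdots + e_n \le m$ we have $V^{m-|\be|}m_\be \subseteq W^m$, and since these lie in distinct graded components their dimensions add; using $1 \in V$ this gives $\dim_\kk W^m \ge \binom{\lceil m/2\rceil + n}{n}\dim_\kk V^{\lfloor m/2\rfloor}$. Taking logarithms and letting $m \to \infty$ (along a subsequence realizing $\GKdim R$ for $V$), and then taking the supremum over all such $V$, yields $\GKdim C \ge n + \GKdim R$, hence $\GKdim B \ge \GKdim R + n$; together with the upper bound this proves $\GKdim B = \GKdim R + n$. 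The final assertion follows because if $R$ is a commutative domain then each $J_i = I_i^{(1)}$ is nonzero by the axioms of a BRD and every nonzero element of $R$ is then a normal non-zero-divisor.

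I expect the lower bound to be the main obstacle. A natural alternative would be to use Lemma~\ref{lem.iter} to write $B$ as a rank-one BR algebra over a rank-$(n-1)$ one and induct, but the obvious candidate $c_n$ for the required normal non-zero-divisor need not be normal in the rank-$(n-1)$ subalgebra once the ideals are not principal; for the same reason $C$ is not an iterated skew polynomial (Ore) extension of $R$ in general, so there is no structure theorem to invoke and the $n$ extra dimensions must be read off directly from the decomposition $\bigoplus_\be R m_\be \subseteq C$. One must also take care that $B$ and $R_\bp[\bt^{\pm1};\bsigma]$ need not be domains, so the argument should use only that the coefficients $\rho_\be$ are non-zero-divisors rather than that various products are patently nonzero.
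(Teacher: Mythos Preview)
Your argument is correct. The upper bound matches the paper exactly. For the lower bound the paper takes a shorter path: writing $x_i := a_i t_i$ with $a_i \in J_i$ normal and regular, it notes that $x_i r = \tau_i\sigma_i(r)\,x_i$, asserts that the subalgebra generated by $R$ and the $x_i$ is the iterated Ore extension $R[x_1;\tau_1\sigma_1]\cdots[x_n;\tau_n\sigma_n]$, and then invokes \cite[Lemma~1]{LMO} to get $\GKdim \ge \GKdim R + n$ in one stroke. Your direct estimate via the graded pieces $R\,m_\be \subseteq C$ and the inequality $\dim W^m \ge \binom{\lceil m/2\rceil + n}{n}\dim V^{\lfloor m/2\rfloor}$ reaches the same conclusion without that citation and, as you rightly observe, sidesteps the question of whether $C$ really carries an iterated Ore structure: indeed $x_k x_i \notin R\,x_i x_k$ as soon as $\sigma_k(a_i)$ is not a unit multiple of $a_i$, which can happen when $J_i$ is not principal. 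So your route is more self-contained, at the cost of redoing by hand what \cite{LMO} would otherwise supply.

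Two cosmetic slips: since you use the subspaces $V^{k} m_\be$, it is \emph{right} multiplication by $m_\be$ that you need to be injective on $R$ (equally true, since $r m_\be = r\rho_\be\,\bt^\be$ and $\rho_\be$ is regular); and $V$ need not generate $R$---any finite-dimensional $V \ni 1$ suffices, with the final supremum over such $V$ recovering $\GKdim R$.
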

\begin{proof}
Since $B$ is a subalgebra of a skew Laurent extension of $R$, the first statement follows directly from Lemma \ref{lem.gkdim1}.

Now suppose that each $J_i$ contains a normal element $a_i$ which is not a zero divisor. Then there are automorphisms $\tau_i$ such that $a_ir=\tau_i(r)a_i$ for all $r \in R$. Set $x_i=a_i t_i$, so $x_ir=\tau_i\sigma_i(r)x_i$ for each $i$ and for each $r \in R$. Thus, the Ore extension $A=R[x_1;\tau_1\sigma_1]\cdots[x_n;\tau_n\sigma_n]$ is a subalgebra of $B$. By \cite[Lemma 1]{LMO}, $\GKdim A \geq \GKdim R+n$. Since $A$ is a subalgebra of $B$, then this combined with part (1) gives the result. The proof in the case that each $H_i$ contains a normal element is similar.
\end{proof}

\begin{corollary}\label{cor.gkdim}
Let $A=A_\mu(R,\bsigma,\ba)$ be a TGWA of type $(A_1)^n$ over a commutative $R$. Suppose that each $\sigma_i$ is locally algebraic on $R$. Then $\GKdim A = \GKdim R + n$.
\end{corollary}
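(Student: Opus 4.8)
The plan is to deduce this directly from the BR-algebra machinery already established. First I would invoke Theorem \ref{thm.tgwa}(2): since $R$ is commutative, the TGWA $A = A_\mu(R,\bsigma,\ba)$ is graded isomorphic to the BR algebra $B = R_\bp(\bt,\bsigma,\bH,\bJ)$ constructed there, with $H_i = R$, $J_i = (\sigma_i(a_i))$, and $p_{ik} = \mu_{ik}\gamma_{ki}\inv$; in particular $B$ is built over the same ring $R$ with the same tuple $\bsigma$ of automorphisms. Because Gelfand--Kirillov dimension is an isomorphism invariant, it then suffices to compute $\GKdim B$.

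Next I would check that $B$ satisfies the hypotheses of Theorem \ref{thm.gkdim}. Each $\sigma_i$ is locally algebraic on $R$ by assumption, so that theorem already gives $\GKdim R \le \GKdim B \le \GKdim R + n$. For the sharp value I would apply the ``$H_i$'' form of the \emph{moreover} clause: here $H_i = R$ contains $1$, a normal non-zero-divisor, so Theorem \ref{thm.gkdim} yields $\GKdim B = \GKdim R + n$, and therefore $\GKdim A = \GKdim R + n$. Equivalently one could use the ``$J_i$'' form: $a_i$ is a central non-zero-divisor in $R$ by the definition of a TGWA and $\sigma_i$ is an automorphism, so $\sigma_i(a_i) \in J_i$ is a normal non-zero-divisor.

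There is essentially no obstacle here; the whole content is confirming that the hypotheses of Theorems \ref{thm.tgwa} and \ref{thm.gkdim} are in force. The only point worth making explicit is that the ``in particular'' case of Theorem \ref{thm.gkdim} (where $R$ is a commutative domain) does not directly apply, since $R$ is assumed only commutative; what saves the argument is that $H_i = R$ (equivalently, that the $a_i$ are non-zero-divisors), which makes the more general normal-element criterion available.
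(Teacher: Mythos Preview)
Your proposal is correct and follows exactly the approach the paper takes: the paper's proof simply reads ``This follows from Theorems \ref{thm.tgwa} and \ref{thm.gkdim},'' and you have faithfully unpacked that citation, noting in particular that one should use the normal-element clause of Theorem \ref{thm.gkdim} (via $1 \in H_i = R$ or $\sigma_i(a_i) \in J_i$) rather than the commutative-domain clause.
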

\begin{proof}
This follows from Theorems \ref{thm.tgwa} and \ref{thm.gkdim}. 
\end{proof}

\section{Weight modules}
\label{sec.weight}

Throughout this section we assume that $R$ is a commutative algebra. Let $B=R_{\bp}(\bt,\bsigma,\bH,\bJ)$ be a BR algebra of rank $n$ over $R$. Let $\Gamma$ denote the subgroup of $\Aut(R)$ generated by the $\sigma_i \in \bsigma$.

There is a natural $\ZZ^n$-action on $\Maxspec(R)$ given by
\[ \balpha \cdot \frm = \bsigma^\balpha(\frm) = \sigma_1^{\alpha_1}\sigma_2^{\alpha_2}\cdots \sigma_n^{\alpha_n}(\frm) \qquad \text{for all $\balpha \in \ZZ^n$,  $\frm \in \Maxspec(R)$}.\]
Here we use the fact that the $\sigma_i$'s all commute so that this action is well defined. We say an orbit $\cO \in \Maxspec(R)/\ZZ^n$ is \emph{torsion-free} if for any $\frm \in \cO$, $\bsigma^\balpha(\frm)=\frm$ implies that $\balpha=\bzero$. Throughout we work only in the setting of torsion-free orbits. In particular, this implies that each $\sigma_i$ has infinite order.

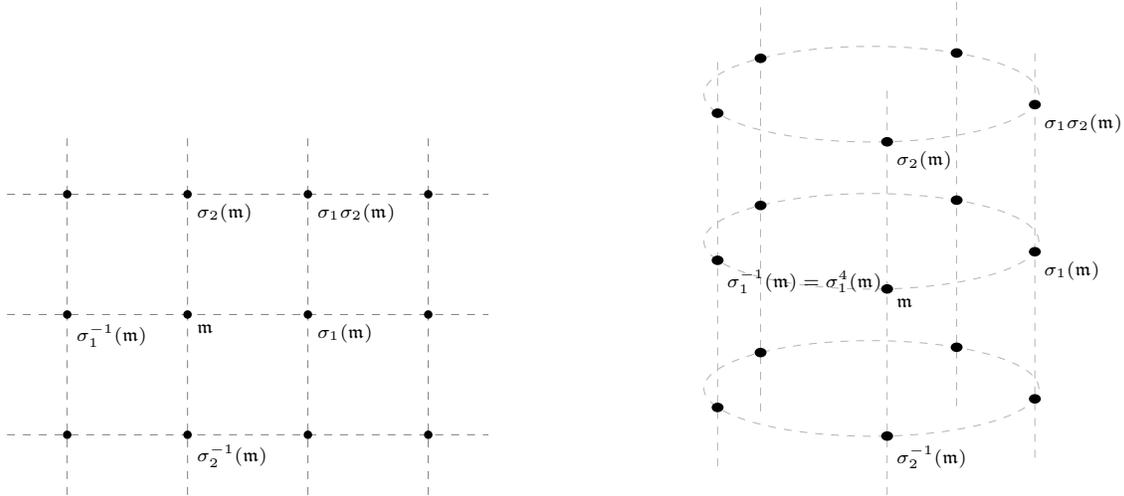
\begin{figure}[ht!]
\begin{subfigure}{.45\textwidth}
\centering
\begin{tikzpicture}[scale = .8]
\foreach \y in { -1, 0, 1}{\draw[help lines, dashed] (-3, 2*\y)--(5, 2*\y);}
\foreach \x in { -1,0,1,2}{\draw[help lines, dashed] (2*\x, -3)--(2*\x, 3);}
\foreach \y in { -1, 0, 1}{
	\foreach \x in { -1, 0, 1, 2}{
		\fill (2*\x, 2*\y) circle (2pt);
	}
}
\node[font = \scriptsize, anchor = north west] at (0, 0){\scriptsize $\mathfrak{m}$};
\node[font = \scriptsize, anchor = north west] at (2, 0){\scriptsize $\sigma_1(\mathfrak{m})$};
\node[font = \scriptsize, anchor = north west] at (-2, 0){\scriptsize $\sigma_1^{-1}(\mathfrak{m})$};
\node[font = \scriptsize, anchor = north west] at (2, 2){\scriptsize $\sigma_1\sigma_2(\mathfrak{m})$};
\node[font = \scriptsize, anchor = north west] at (0, 2){\scriptsize $\sigma_2(\mathfrak{m})$};
\node[font = \scriptsize, anchor = north west] at (0, -2){\scriptsize $\sigma_2^{-1}(\mathfrak{m})$};
\end{tikzpicture}
\caption{A torsion-free orbit of a maximal ideal $\frm$ of $R$ under the action of $\ZZ^2 = \langle \sigma_1, \sigma_2 \rangle$.}
\end{subfigure}
\begin{subfigure}{.1\textwidth}
~
\end{subfigure}
\begin{subfigure}{.4\textwidth}
\centering
\begin{tikzpicture}[scale = .9, x={(-.7cm,-.25cm)},
                    y={(.7cm,-.25cm)},
                    z={(0cm, .87cm)},
                    xscale=1.25]
  \def\r{2}       
  \def\h{5}       

\foreach \z in {0,2.5,5}
{
  \draw[-,Gray,very thin, dashed] (\r,0,\z)
    \foreach \t in {5,10,...,360}
      {--({\r*cos(\t)},{\r*sin(\t)},\z)};
}

  \foreach \t in {1,...,5}
  {
    \def\ang{(\t-1.3)*360/5}
    \draw[-,Gray,very thin, dashed] ({\r*cos(\ang)},{\r*sin(\ang)},-1) to ({\r*cos(\ang)},{\r*sin(\ang)},6);
    \foreach \z in {0, 2.5, 5}{
    \fill ({\r*cos(\ang)}, {\r*sin(\ang)}, \z) circle (2pt);
        }
    }
  \node[font = \scriptsize, anchor = north west] at ({\r*cos(.7*360/5)}, {\r*sin(.7*360/5)}, 2.5){\scriptsize $\mathfrak{m}$};
    \node[font = \scriptsize, anchor = north west] at ({\r*cos(.7*360/5)}, {\r*sin(.7*360/5)}, 5){\scriptsize $\sigma_2(\mathfrak{m})$};
    \node[font = \scriptsize, anchor = north west] at ({\r*cos(.7*360/5)}, {\r*sin(.7*360/5)}, 0){\scriptsize $\sigma_2^{-1}(\mathfrak{m})$};
 \node[font = \scriptsize, anchor = north west] at ({\r*cos(1.7*360/5)}, {\r*sin(1.7*360/5)}, 2.5){\scriptsize $\sigma_1(\mathfrak{m})$};
  \node[font = \scriptsize, anchor = north west] at ({\r*cos(-.3*360/5)}, {\r*sin(-.3*360/5)}, 2.5){\scriptsize $\sigma_1^{-1}(\mathfrak{m}) = \sigma_1^4(\mathfrak{m})$};
   \node[font = \scriptsize, anchor = north west] at ({\r*cos(1.7*360/5)}, {\r*sin(1.7*360/5)}, 5){\scriptsize $\sigma_1\sigma_2(\mathfrak{m})$};
\end{tikzpicture}
\caption{An orbit of a maximal ideal $\frm$ of $R$ that is not torsion-free. Here, $\sigma_1^5(\frm) = \frm$.}
\end{subfigure}
\caption{We picture a maximal ideal $\frm$ as a point in $\RR^n$ and picture the action of the automorphism $\sigma_i$ as translating $\frm$ in the direction of the $i$th coordinate axis. In this section, since we assume orbits are torsion-free, the orbits can be pictured as the lattice $\ZZ^n \subseteq \RR^n$.}
\label{fig1}
\end{figure}

We say a $B$-module $M$ is a \emph{weight module} if 
\[ M = \bigoplus_{\frm \in \Maxspec(R)} M_\frm \quad\text{where}\quad M_\frm = \{ m \in M \mid \frm \cdot m = 0\}.\]
We call $M_\frm$ a \emph{weight space} for $M$. The \emph{support} of $M$ is defined as
\[ \supp(M) = \{ \frm \in \Maxspec(R) \mid M_\frm \neq 0\}.\]

The category of weight modules for $B$ is denoted by $(B,R)\wmod$. Let $\cO \in \Maxspec(R)/\ZZ^n$ be an orbit. We denote by $(B,R)\wmod_\cO$ the full subcategory of modules $M \in (B,R)\wmod$ such that $\supp(M) \subseteq \cO$. If $M \in (B,R)\wmod$, then $B_{\pm \be_i} M_{\frm} \subseteq M_{\sigma_i^{\pm 1}(\frm)}$ for each $\frm \in \supp(M)$ and each $i \in [n]$. It follows that
\[ (B,R)\wmod \simeq \prod_{\cO \in \Maxspec(R)/\ZZ^n} (B,R)\wmod_{\cO}.\]

Our goal will be to describe the simple objects of $(B,R)\wmod_\cO$ corresponding to a torsion-free orbit. Our results restrict to those of higher rank GWAs and, more generally, TGWAs of Cartan type $(A_1)^n$ \cite{MT}.
 
\begin{definition}\label{defn.ibreak}
Let $i \in [n]$. Set
\begin{align}\label{eq.css}
\css_i(B) = \Spec(R/H_iJ_i) = \{ \bp \in \Spec(R) \mid \bp \supset H_iJ_i
\}.
\end{align}
Then $\frm \in \Maxspec(M)$ is an \emph{$i$-break}, or \emph{break with respect to $i$}, if $\sigma_i(\frm) \in \css_i(B)$.
\end{definition}

The term ``$i$-break" here should not be confused with that of $j$-breaks and $h$-breaks in \cite{GRW1}.

\begin{lemma}\label{lem.invariant-breaks}
Let $\frm\in\Maxspec(M)$ be an $i$-break, and let $i\neq k\in[n]$. Then $\sigma_k^{\pm 1}(\frm)$ is also an $i$-break.
\end{lemma}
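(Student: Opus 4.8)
The plan is to unwind the definition of an $i$-break and use the commutation relations \eqref{eq.comm1} from the Bell--Rogalski datum. Recall that $\frm$ is an $i$-break means $\sigma_i(\frm) \supseteq H_i J_i$, i.e.\ $\sigma_i(\frm) \in \css_i(B)$. I want to show $\sigma_i\bigl(\sigma_k^{\pm 1}(\frm)\bigr) \supseteq H_i J_i$ as well. Since the automorphisms commute, $\sigma_i \sigma_k^{\pm 1}(\frm) = \sigma_k^{\pm 1}\bigl(\sigma_i(\frm)\bigr)$, so it suffices to show that $\css_i(B)$ is stable under $\sigma_k^{\pm 1}$ for $k \neq i$; then applying $\sigma_k^{\pm 1}$ to the containment $\sigma_i(\frm) \supseteq H_iJ_i$ and using stability gives the claim.

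The key step is therefore: for $k \neq i$, $\sigma_k^{\pm 1}$ maps $\css_i(B)$ to itself. This is immediate from \eqref{eq.comm1}: since $\sigma_k(H_i) = H_i$ and $\sigma_k(J_i) = J_i$ for $k \neq i$, we get $\sigma_k(H_i J_i) = \sigma_k(H_i)\sigma_k(J_i) = H_i J_i$ (here commutativity of $R$ makes $H_iJ_i$ an ideal and $\sigma_k$ respects products of ideals), and likewise $\sigma_k^{-1}(H_iJ_i) = H_iJ_i$. Hence a prime $\bq$ contains $H_iJ_i$ if and only if $\sigma_k^{\pm 1}(\bq)$ does, so $\sigma_k^{\pm 1}(\css_i(B)) = \css_i(B)$.

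Putting this together: given $\frm$ an $i$-break, we have $\sigma_i(\frm) \in \css_i(B)$, so $\sigma_k^{\pm1}\bigl(\sigma_i(\frm)\bigr) \in \css_i(B)$ by the previous step, and since $\sigma_i\sigma_k^{\pm1}(\frm) = \sigma_k^{\pm1}\sigma_i(\frm)$ this says exactly that $\sigma_k^{\pm1}(\frm)$ is an $i$-break. There is no real obstacle here — the statement is essentially a bookkeeping consequence of the invariance hypothesis \eqref{eq.comm1} built into the BRD, combined with the fact that the $\sigma_i$ commute. The only point requiring a moment's care is checking that $\sigma_k$ of a product of ideals is the product of the images, which holds since $\sigma_k$ is an algebra automorphism and $R$ is commutative; I would state this explicitly rather than leave it implicit.
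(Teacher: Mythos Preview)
Your proof is correct and follows essentially the same approach as the paper: both use commutativity of the $\sigma_i$ to write $\sigma_i(\sigma_k^{\pm1}(\frm)) = \sigma_k^{\pm1}(\sigma_i(\frm))$ and then invoke \eqref{eq.comm1} to conclude $\sigma_k^{\pm1}(H_iJ_i) = H_iJ_i$, so that the containment $\sigma_i(\frm) \supseteq H_iJ_i$ is preserved under $\sigma_k^{\pm1}$. The paper's version is simply a one-line compression of your argument.
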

\begin{proof}
If $\sigma_i(\frm)\in \css_i(B)$, then $\sigma_i(\frm)\supset H_iJ_i$, so $\sigma_i(\sigma_k^{\pm 1}(\frm))=\sigma_k^{\pm 1}\sigma_i(\frm)\supset \sigma_k^{\pm 1}(H_iJ_i)=H_iJ_i$,
hence the result follows.
\end{proof}

\begin{proposition}\label{prop.dim-weight}
Suppose that $\cO=\ZZ^n\cdot \frm \subset \Maxspec(R)/\ZZ^n$ is a torsion-free orbit and $M\in (B,R)\wmodO$. 
\begin{enumerate}
    \item \label{dim1} If $w\in M_{\frm}$ is a weight vector and $N=B\cdot w\subseteq M$, then $\dim_{R/\bsigma^{\balpha}(\frm)}N_{\bsigma^{\balpha}(\frm)}\leq 1$ for all $\balpha\in \ZZ^n$.
    \item \label{dim2} If $M$ is simple, then 
    $\dim_{R/\bsigma^{\balpha}(\frm)}M_{\bsigma^{\balpha}(\frm)}\leq 1$ for all $\balpha\in \ZZ^n$.
\end{enumerate}
\end{proposition}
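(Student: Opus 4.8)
The plan is to derive part (2) from part (1) — a simple $M$ equals $B\cdot w$ for any nonzero weight vector $w$, so part (1) applies — and to prove part (1) by describing each weight space of $N=B w$ explicitly. Write $w\in M_{\frm}$ and $\frm_{\balpha}:=\bsigma^{\balpha}(\frm)$; since $\cO$ is torsion-free the $\frm_{\balpha}$ are pairwise distinct, so $N=\bigoplus_{\balpha}B_{\balpha}w$ and $N_{\frm_{\balpha}}=B_{\balpha}w=I^{(\balpha)}\bt^{\balpha}w$. The key observation is that $B_{\balpha}w$ is exactly the image of the $R$-linear map $\rho_{\balpha}\colon I^{(\balpha)}\to M_{\frm_{\balpha}}$, $c\mapsto (c\bt^{\balpha})w$ (it is $R$-linear because $r\cdot(c\bt^{\balpha})=(rc)\bt^{\balpha}$ in $B$). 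Its image is killed by $\frm_{\balpha}$, so $\rho_{\balpha}$ annihilates $\frm_{\balpha}I^{(\balpha)}$ and $N_{\frm_{\balpha}}$ is a quotient of the $R/\frm_{\balpha}$-vector space $I^{(\balpha)}/\frm_{\balpha}I^{(\balpha)}$. So everything comes down to when this quotient is at most one-dimensional, and the obstruction turns out to be precisely the presence of breaks.

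In the generic case this is immediate. Recall $I^{(\balpha)}=\prod_i I_i^{(\alpha_i)}$ is a product of ideals of the form $\sigma_i^{\ell}(J_i)$ (when $\alpha_i>0$) or $\sigma_i^{\ell}(H_i)$ (when $\alpha_i<0$). If none of these factors lies in $\frm_{\balpha}$, then (as $\frm_{\balpha}$ is prime) $I^{(\balpha)}\not\subseteq\frm_{\balpha}$, i.e.\ $I^{(\balpha)}+\frm_{\balpha}=R$; comaximality gives $I^{(\balpha)}\cap\frm_{\balpha}=\frm_{\balpha}I^{(\balpha)}$, hence $I^{(\balpha)}/\frm_{\balpha}I^{(\balpha)}\cong (I^{(\balpha)}+\frm_{\balpha})/\frm_{\balpha}=R/\frm_{\balpha}$ is one-dimensional, and $\dim_{R/\frm_{\balpha}}N_{\frm_{\balpha}}\le 1$. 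Unwinding: a factor $\sigma_i^{\ell}(J_i)\subseteq\frm_{\balpha}$ forces $J_i\subseteq\frm_{\balpha-\ell\be_i}$, whence $\sigma_i(\frm_{\balpha-(\ell+1)\be_i})\in\css_i(B)$ and $\frm_{\balpha-(\ell+1)\be_i}$ is an $i$-break (and symmetrically for the $H_i$-factors). So the only situation not handled is when an $i$-break occurs among these points — this is the heart of the matter. An induction on $|\balpha|$ reduces to single steps: by Lemma~\ref{lem.BRgen} one has $B_{\alpha_k\be_k}=(B_{\be_k})^{\alpha_k}$ and the $B_{\alpha_k\be_k}$ multiply across coordinates to $B_{\balpha}$, so $N_{\frm_{\balpha}}$ is obtained from a previous weight space $N_{\frm_{\balpha\mp\be_i}}$ by one application of $B_{\pm\be_i}$.

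For the break case I would exploit the identity $B_{-\balpha}B_{\balpha}=I^{(-\balpha)}\,\bsigma^{-\balpha}(I^{(\balpha)})$ in $R$, read off from the skew Laurent relations. If a factor of $I^{(\balpha)}$ lies in $\frm_{\balpha}$, then $I^{(\balpha)}\subseteq\frm_{\balpha}$, so $\bsigma^{-\balpha}(I^{(\balpha)})\subseteq\frm$ and therefore $B_{-\balpha}\cdot N_{\frm_{\balpha}}=(B_{-\balpha}B_{\balpha})w=0$. Hence the submodule $B\cdot N_{\frm_{\balpha}}$ of $N$ has vanishing $\frm$-component (by torsion-freeness the only move $M_{\frm_{\balpha}}\to M_{\frm}$ is via $B_{-\balpha}$). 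When $M$ is simple this already finishes part (2): $B\cdot N_{\frm_{\balpha}}$ is $0$ or all of $M$, and it cannot be all of $M$ since the nonzero vector $w$ lies in $M_{\frm}$, so $N_{\frm_{\balpha}}=0$.

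The remaining point — part (1) when $N=B w$ is not simple and a break occurs — is the step I expect to be the main obstacle, since $I^{(\balpha)}/\frm_{\balpha}I^{(\balpha)}$ can genuinely be more than one-dimensional for a general ideal (think of a cotangent space), so the bare quotient argument is not enough. The way I would try to push through is to replace $\rho_{\balpha}$ by a map with strictly larger kernel: peel off the offending "break factor" of $I^{(\balpha)}$ and pass to a sub-quotient of $N$ on which it no longer obstructs, or use Lemma~\ref{lem.iter} together with Lemma~\ref{lem.invariant-breaks} to reduce to the rank-one situation and argue string by string between consecutive breaks. (In the cases of primary interest — e.g.\ when the $H_i,J_i$ are principal, so that all $I^{(\balpha)}$ are locally principal, which by Theorem~\ref{thm.tgwa} covers the TGWA examples — the quotient $I^{(\balpha)}/\frm_{\balpha}I^{(\balpha)}$ is automatically at most one-dimensional and no further work is needed.) Assembling the generic case, the break case, and the single-step induction then gives the proposition.
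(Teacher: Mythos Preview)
The paper's argument is much shorter than yours: for any $u,u'\in N_{\frm_{\balpha}}$ it writes $u=r\bt^{\balpha}w$, $u'=r'\bt^{\balpha}w$ with $r,r'\in I^{(\balpha)}$, uses commutativity of $R$ to get $r'u-ru'=(r'r-rr')\bt^{\balpha}w=0$, and then asserts that this $R$-linear dependence ``descends'' to $R/\bsigma^{\balpha}(\frm)$. Part~(2) is deduced from~(1) by simplicity. There is no separate treatment of breaks.

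Your route through $I^{(\balpha)}/\frm_{\balpha}I^{(\balpha)}$ and the break/no-break dichotomy is not a detour; it isolates exactly the point where the paper's argument is incomplete. The ``descent'' step fails precisely when both coefficients $r,r'$ lie in $\frm_{\balpha}$, i.e.\ in your break case, where the relation $r'u=ru'$ reduces to $0=0$ over $R/\frm_{\balpha}$. Your proof of~(2) is correct and genuinely plugs this hole: in the no-break case $I^{(\balpha)}\not\subseteq\frm_{\balpha}$ and comaximality gives $\dim\le 1$; in the break case $B_{-\balpha}B_{\balpha}\subseteq\frm$ kills the $\frm$-component of $B\cdot N_{\frm_{\balpha}}$, and simplicity forces $N_{\frm_{\balpha}}=0$.

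Your hesitation on~(1) is justified, because part~(1) is \emph{false} as stated. Take $\kk$ of characteristic~$0$, $R=\kk[x,y,z]$, rank one with $\sigma(x)=x$, $\sigma(y)=y$, $\sigma(z)=z+1$, $H=R$, $J=(x,y)$, and $\frm=(x,y,z)$; the orbit is torsion-free. For the induced module $M=B\otimes_R R/\frm$ with $w=1\otimes\bar 1$ one computes
\[
N_{\sigma(\frm)}\;\cong\;J\big/\sigma(\frm)J\;=\;(x,y)\big/(x,y,z+1)(x,y),
\]
and the classes of $x$ and $y$ are linearly independent over $R/\sigma(\frm)\cong\kk$ (set $z=-1$ and compare degrees), so $\dim_{R/\sigma(\frm)}N_{\sigma(\frm)}=2$. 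Thus no argument can complete~(1) in the break case; the obstacle you flagged is real, and the paper's short proof glosses over the same point. Only part~(2) is invoked later in the paper, so nothing downstream is affected.
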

\begin{proof}
\eqref{dim1}
Since $\cO$ is a torsion-free orbit, we have $B_{\balpha} w=N_{\bsigma^{\balpha}(\frm)}$ for all $\balpha\in\ZZ^n$. Let $u, u'\in N_{\bsigma^{\balpha}(\frm)}$, then there exist $r,r'\in I^{(\balpha)}$ such that $u:=r\bt^{\balpha}\cdot w$, $u':=r'\bt^{\balpha}\cdot w$. It follows that
\[r' u-ru'= r'r\bt^{\balpha}w -rr'\bt^{\balpha}w=0,\]
hence $u$ and $u'$ are linearly dependent over $R$, and since $u', u\in N_{\bsigma^{\balpha}(\frm)}$, the linear dependence descends to show that $u$ and $u'$ are linearly dependent over $R/\bsigma^{\balpha}(\frm)$.

\eqref{dim2} For any nonzero weight vector $0\neq w\in M$, we have $B\cdot w= M$ because $M$ is simple. The result then follows from part \eqref{dim1}.
\end{proof}

\begin{lemma}\label{lem:cross-break}
Let $\cO$ be a torsion-free orbit and let $\frm\in\cO$ be an $i$-break. Let $M$ be a simple weight module. Then $B_{\be_i}M_{\frm} = 0$ and $B_{-\be_i}M_{\sigma_i(\frm)}=0$.
\end{lemma}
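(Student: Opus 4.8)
The statement says: if $\frm$ is an $i$-break in a torsion-free orbit $\cO$, and $M$ is a simple weight module, then $B_{\be_i} M_\frm = 0$ and $B_{-\be_i} M_{\sigma_i(\frm)} = 0$. The plan is to argue directly from the definition of an $i$-break together with the structure of the graded pieces $B_{\pm\be_i}$. Recall that $B_{\be_i} = J_i t_i$ and $B_{-\be_i} = \sigma_i\inv(H_i) t_i\inv$ (Lemma \ref{lem.BRgen}), and that $\frm$ being an $i$-break means $\sigma_i(\frm) \supseteq H_i J_i$, equivalently (applying $\sigma_i\inv$, which fixes nothing in general but translates the containment) $\frm \supseteq \sigma_i\inv(H_i)\sigma_i\inv(J_i)$.

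First I would handle $B_{\be_i} M_\frm$. Take $w \in M_\frm$ and $r t_i \in B_{\be_i}$ with $r \in J_i$. Then $r t_i \cdot w \in M_{\sigma_i(\frm)}$. I want to show $r t_i \cdot w = 0$. The element $B_{-\be_i} B_{\be_i} = \sigma_i\inv(H_i) t_i\inv J_i t_i = \sigma_i\inv(H_i)\sigma_i\inv(J_i) = \sigma_i\inv(H_i J_i)$ acts on $w$; since $\frm$ is an $i$-break, $\sigma_i\inv(H_i J_i) \subseteq \frm$ (this is exactly $\sigma_i\inv$ applied to $\sigma_i(\frm) \supseteq H_i J_i$), so $B_{-\be_i}B_{\be_i} \cdot w \subseteq \frm \cdot w = 0$. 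Thus $B_{-\be_i}\cdot(r t_i \cdot w) = 0$, i.e. $\sigma_i\inv(H_i) t_i\inv \cdot (rt_i\cdot w) = 0$, which says the weight vector $v := r t_i \cdot w \in M_{\sigma_i(\frm)}$ is killed by $B_{-\be_i}$. If $v \neq 0$, then $Bv$ is a nonzero submodule of $M$; by simplicity $Bv = M$. But by Proposition \ref{prop.dim-weight}, applied to the cyclic module generated by $v$ at the weight $\sigma_i(\frm)$, every weight space of $Bv$ has dimension $\le 1$, and $(Bv)_{\sigma_i^2(\frm)} = B_{\be_i} v \subseteq B_{\be_i}(B_{-\be_i}$-torsion$)$... more cleanly: $(Bv)_\frm = B_{-\be_i} v = 0$, so $\frm \notin \supp(Bv)$, contradicting $Bv = M \ni w \neq 0$ with $w \in M_\frm$. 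Hence $v = 0$, proving $B_{\be_i}M_\frm = 0$.

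The argument for $B_{-\be_i} M_{\sigma_i(\frm)} = 0$ is symmetric: take $w' \in M_{\sigma_i(\frm)}$ and $r' t_i\inv \in B_{-\be_i}$ with $r' \in \sigma_i\inv(H_i)$, set $v' := r' t_i\inv \cdot w' \in M_\frm$, and compute $B_{\be_i} \cdot v' \subseteq J_i t_i \sigma_i\inv(H_i) t_i\inv \cdot w' = J_i \sigma_i(\sigma_i\inv(H_i)) \cdot w' = J_i H_i \cdot w' = H_i J_i \cdot w' \subseteq \sigma_i(\frm) \cdot w' = 0$ (using $H_i J_i \subseteq \sigma_i(\frm)$ since $\frm$ is an $i$-break). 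So $v' \in M_\frm$ is killed by $B_{\be_i}$; if $v' \neq 0$ then $Bv' = M$ by simplicity, but $(Bv')_{\sigma_i(\frm)} = B_{\be_i} v' = 0$, contradicting $\sigma_i(\frm) \in \supp(M)$ — wait, this needs $\sigma_i(\frm) \in \supp(M)$, which holds precisely because we may assume $w' \neq 0$; if $M_{\sigma_i(\frm)} = 0$ there is nothing to prove. So $v' = 0$.

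The main obstacle is getting the bookkeeping of $\sigma_i$ and $\sigma_i\inv$ on the ideals $H_i, J_i$ exactly right when composing the actions of $B_{\pm\be_i}$, and making sure the simplicity/support contradiction is invoked correctly — one must use that a nonzero weight vector generates all of $M$, that cyclic modules have $\le 1$-dimensional weight spaces along the torsion-free orbit (Proposition \ref{prop.dim-weight}\eqref{dim1}), and that if a weight vector $v$ at weight $\frn$ is killed by the relevant $B_{\pm\be_i}$ then the weight $\sigma_i^{\mp 1}(\frn)$ drops out of $\supp(Bv)$, contradicting $Bv = M$. Everything else is routine.
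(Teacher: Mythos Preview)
Your proposal is correct and follows essentially the same approach as the paper: assume the image vector $v = j_i t_i \cdot w$ (resp.\ $v' = \sigma_i\inv(h_i) t_i\inv \cdot w'$) is nonzero, use the $i$-break condition $H_iJ_i \subseteq \sigma_i(\frm)$ to show $B_{-\be_i} v = 0$ (resp.\ $B_{\be_i} v' = 0$), and then derive a contradiction to simplicity because $(Bv)_\frm = B_{-\be_i} v = 0$ while $w \in M_\frm$ is nonzero. Your detour through Proposition~\ref{prop.dim-weight} is unnecessary and you rightly abandon it; the paper's proof does not invoke that proposition at all, only torsion-freeness of $\cO$ (to identify $(Bv)_\frm$ with $B_{-\be_i} v$) and simplicity of $M$.
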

\begin{proof}
We prove that $B_{\be_i}M_{\frm} = 0$; the other proof is similar. If $M_{\frm} = 0$, then the statement is obvious. Now
let $0\neq w\in M_{\frm}$, and $j_i \in J_i$. Suppose that $w':=j_it_i\cdot w\neq 0$. Then for all $h_i \in H_i$ we have 
\[\sigma_i\inv(h_i)t_i\inv\cdot w'=\sigma_i\inv(h_i)t_i\inv j_it_i\cdot w=\sigma_i\inv(h_i)\sigma_i\inv(j_i)\cdot w=\sigma_i\inv(h_i j_i)\cdot w=0\]
where the last equality follows because $\frm$ is an $i$-break, which means that $\sigma_i\inv(H_i J_i) \subset \frm$.
Since $\cO$ is a torsion-free orbit, $(B w')_\frm=B_{-\be_i} w'=\sigma\inv(H_i)t_i\inv\cdot w'=0$, so $w\not\in B_{-\be_i}w'$. Hence $w\not\in B w'$ and so $0\neq B w'\subsetneq M$ is a nontrivial proper weight submodule, contradicting the simplicity of $M$.
\end{proof}

Let $\cO \in \Maxspec(R)/\ZZ^n$ be a torsion-free orbit and let $\beta_i\subseteq \cO$ be the set of $i$-breaks for some $i \in [n]$.
By Lemma \ref{lem.invariant-breaks}, there is an action of $\Gamma_i=\langle \sigma_1,\ldots,\sigma_{i-1},\sigma_{i+1},\ldots,\sigma_n\rangle\simeq \ZZ^{n-1}$ on $\beta_i$. We consider the set $\overline{\beta}_i=\beta_i/\Gamma_i$ and we think of the elements of $\overline{\beta}_i$ as hyperplanes, normal to the direction of $\be_i$, in the rank $n$ lattice given by $\cO$.
For each $i$, we define a partial order on $\cO$ by setting $\frm \prec_i \sigma_i(\frm)$ for all $\frm\in \cO$ and extending transitively. Notice that this ordering is well defined because $\cO$ is torsion-free. This partial order induces a total order on $\overline{\beta}_i$ by setting $[\frm]\prec_i[\frn]$ if there exist $\gamma_1,\gamma_2\in\Gamma_i$ with $\gamma_1(\frm)\prec_i\gamma_2(\frn)$. We define
\[
\overline{\beta}'_i = \begin{cases} \{\infty_i\} & \text{if $\overline{\beta}_i=\emptyset$} \\
\overline{\beta}_i\cup\{\infty_i\} 
    & \text{if $\overline{\beta}_i$ contains a maximal element with respect to $\prec_i$} \\
\overline{\beta}_i
    & \text{otherwise.}
\end{cases}\]
For $[\frn]\in\overline{\beta}_i'$, we let $[\frn]^-$ be the maximal element of $\overline{\beta}_i'$ such that $[\frn]^-\prec_i[\frn]$, or $[\frn]^-=-\infty_i$ if $[\frn]$ was a minimal element of $\overline{\beta}_i'$. We extend the partial ordering on $\cO$ to $\cO\cup\{\pm\infty_i\}$ in the obvious way, i.e., for all $\frm\in\cO$ we have $-\infty_i\prec_i \frm\prec_i \infty_i$. Also, by abuse of notation, for $\frm\in\cO$ and $[\frn]\in\overline{\beta}_i'$ we say that $\frm\preceq_i[\frn]$ (resp. $[\frn]\preceq_i \frm$) if there exists $\gamma\in\Gamma_i$ such that $\frm\preceq_i\gamma\frn$ (resp. $\gamma\frn\preceq_i\frm$). This denotes on which side of the hyperplane $[\frn]$ the lattice point $\frm$ sits, and in case of equality it means that $\frm\in[\frn]$ is a point on the hyperplane. We give an explicit example of picturing a torsion-free orbit and its breaks in Figure~\ref{fig2}.

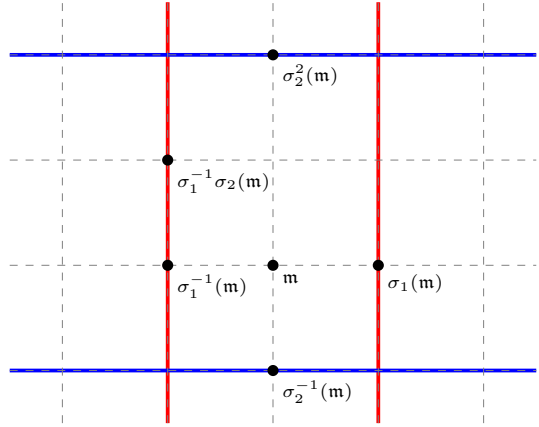
\begin{figure}[ht!]    
\begin{tikzpicture}[scale = .7]
\draw[red, line width=.5mm] (2, -3)--(2,5);
\draw[red, line width=.5mm] (-2, -3)--(-2,5);
\draw[blue, line width=.5mm] (-5, 4)--(5,4);
\draw[blue, line width=.5mm] (-5, -2)--(5,-2);
\foreach \y in { -1, 0, 1, 2}{\draw[help lines, dashed] (-5, 2*\y)--(5, 2*\y);}
\foreach \x in {-2, -1,0,1,2}{\draw[help lines, dashed] (2*\x, -3)--(2*\x, 5);}
\node[font = \scriptsize, anchor = north west] at (0, 0){\scriptsize $\mathfrak{m}$};
\fill (0, 0) circle (3pt);
\node[font = \scriptsize, anchor = north west] at (2, 0){\scriptsize $\sigma_1(\mathfrak{m})$};
\fill (2, 0) circle (3pt);
\node[font = \scriptsize, anchor = north west] at (-2, 0){\scriptsize $\sigma_1^{-1}(\mathfrak{m})$};
\fill (-2, 0) circle (3pt);
\node[font = \scriptsize, anchor = north west] at (-2, 2){\scriptsize $\sigma_1^{-1}\sigma_2(\mathfrak{m})$};
\fill (-2, 2) circle (3pt);
\node[font = \scriptsize, anchor = north west] at (0, 4){\scriptsize $\sigma_2^2(\mathfrak{m})$};
\fill (0, 4) circle (3pt);
\node[font = \scriptsize, anchor = north west] at (0, -2){\scriptsize $\sigma_2^{-1}(\mathfrak{m})$};
\fill (0, -2) circle (3pt);
\end{tikzpicture}
\caption{For the pictured orbit, $\sigma_1^{-1}(\frm)$ is a $1$-break, and so $[\sigma_1^{-1}(\frm)]=\{\sigma_1^{-1}\sigma_2^k(\frm)~|~k\in\ZZ\}$ can be pictured as the hyperplane $x = -1$ in our picture. Similarly, $\sigma_1(\frm)$ is a $1$-break, pictured on the hyperplane $x = 1$. We therefore see that $\overline{\beta}_1 = \{ [\sigma_1\inv(\frm)], [\sigma_1(\frm)] \}$ and $\overline{\beta}'_1 = \{ [\sigma_1\inv(\frm)], [\sigma_1(\frm)] , \infty_1 \}$. We also have $[\sigma_1\inv(\frm)]^- = -\infty_1$,  $[\sigma_1(\frm)]^- = [\sigma_1\inv(\frm)]$. For the $2$-breaks, we have $\overline{\beta}_2' = \{[\sigma_2\inv(\frm)], [\sigma_2^2(\frm)], \infty_2\}$. }
\label{fig2}
\end{figure}

Our analysis of simple weight modules below follows \cite{hart1} for TGWAs. 
Let $\frm$ be a maximal ideal. Define
\[ G_\frm = \{ \balpha \in \ZZ^n \mid B_{-\balpha}B_\balpha \not\subset \frm \}.\]
In \cite{hart1}, this set was called $\widetilde{G_\frm}$.

\begin{lemma}\label{lem.Gm}
Let $\frm\in\cO$, then there exists a unique $n$-tuple $([\frn_1],\ldots,[\frn_n])\in \prod_{i=1}^n\overline{\beta}'_i$ such that $[\frn_i]^-\prec_i\frm\preceq_i[\frn_i]$ for all $i$. Then 
\[G_\frm=\{\balpha\in\ZZ^n~|~[\frn_i]^-\prec_i\sigma_i^{\alpha_i}(\frm)\preceq_i[\frn_i]\quad\text{for all }i=1,\ldots,n\}.\]
\end{lemma}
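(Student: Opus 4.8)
The plan is to analyze $G_\frm$ coordinate by coordinate, reducing the rank-$n$ statement to a collection of rank-$1$ computations in each direction $\be_i$, then reassembling. First I would observe that $B_{-\balpha}B_{\balpha} = I^{(-\balpha)}\bsigma^{-\balpha}(I^{(\balpha)})\bt^{\bzero}$ is an honest ideal of $R$ (using \eqref{eq.comm1}, \eqref{eq.comm2}), and that since all the $\sigma_i$ commute and each $I_k^{(\alpha_k)}$ is $\sigma_i$-invariant for $i \neq k$, this ideal factors as a product $\prod_{i=1}^n L_i$ where $L_i$ depends only on $\alpha_i$ (and the data in direction $i$). Since $\frm$ is prime (indeed maximal), $\prod_i L_i \subset \frm$ if and only if some $L_i \subset \frm$; hence $\balpha \in G_\frm$ if and only if for every $i$, the rank-$1$ condition "$L_i \not\subset \frm$" holds. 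This is exactly the point where the product grading does the work: membership in $G_\frm$ is a conjunction of $n$ independent one-dimensional conditions.

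Next I would establish the rank-$1$ fact: for a fixed $i$, the set of $\alpha_i \in \ZZ$ for which $L_i \not\subset \frm$ is precisely $\{\alpha_i \mid [\frn_i]^- \prec_i \sigma_i^{\alpha_i}(\frm) \preceq_i [\frn_i]\}$, where $[\frn_i]$ is the unique element of $\overline{\beta}_i'$ with $[\frn_i]^- \prec_i \frm \preceq_i [\frn_i]$. Here I would unwind the definition of $I_{\sigma_i}^{(k)}(H_i,J_i)$: the ideal $B_{-\be_i}B_{\be_i}$ contributes a factor of the form $\sigma_i\inv(H_iJ_i)$ (up to applying a power of $\sigma_i$), and more generally $B_{-\balpha}B_{\balpha}$ restricted to direction $i$ telescopes to a product of $\sigma_i$-translates of $H_iJ_i$ over the "interval" of integers strictly between $0$ and $\alpha_i$, together with one more translate depending on the sign. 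The condition $L_i \not\subset \frm$ then says that none of these finitely many translates $\sigma_i^{j}(H_iJ_i)$ is contained in $\frm$, i.e. $\sigma_i^{j+1}(\frm) \notin \css_i(B)$, i.e. $\sigma_i^j(\frm)$ is not an $i$-break, for $j$ ranging over the relevant interval between $\frm$ and $\sigma_i^{\alpha_i}(\frm)$. Since the $i$-breaks in $\cO$ are exactly the $\Gamma_i$-orbits listed in $\overline{\beta}_i$, and $[\frn_i]$ is the first break at or above $\frm$ while $[\frn_i]^-$ is the first break strictly below, "no break strictly between" translates precisely to $[\frn_i]^- \prec_i \sigma_i^{\alpha_i}(\frm) \preceq_i [\frn_i]$, with careful bookkeeping at the endpoints (whether $\sigma_i^{\alpha_i}(\frm)$ may itself lie on the hyperplane $[\frn_i]$, and whether $[\frn_i] = \infty_i$ or $[\frn_i]^- = -\infty_i$).

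Then existence and uniqueness of the tuple $([\frn_1],\dots,[\frn_n])$ is immediate: in each direction $i$, the total order $\prec_i$ on $\overline{\beta}_i'$ together with the construction of $\overline{\beta}_i'$ (adding $\infty_i$ exactly when needed) guarantees there is a unique least element at or above $\frm$, which is $[\frn_i]$, and $[\frn_i]^-$ is then well defined by the definition of the $(-)$-operation. Combining this with the factorization and the rank-$1$ computation yields the displayed formula for $G_\frm$.

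The main obstacle I expect is the endpoint bookkeeping in the rank-$1$ step: correctly matching the asymmetry in the definition of $I_{\sigma_i}^{(k)}(H_i,J_i)$ (products of $J_i$-translates for $k>0$ versus $H_i$-translates for $k<0$, and the $\sigma_i\inv$ shift) against the strict-versus-non-strict inequalities $[\frn_i]^- \prec_i \sigma_i^{\alpha_i}(\frm) \preceq_i [\frn_i]$, and verifying that the cases $\alpha_i = 0$, $\alpha_i > 0$, $\alpha_i < 0$ all produce the same interval description. One must check, for instance, that $\sigma_i^{\alpha_i}(\frm)$ is allowed to equal a point of the hyperplane $[\frn_i]$ (so $\preceq_i$, not $\prec_i$, on the right) but is not allowed to reach $[\frn_i]^-$ (so $\prec_i$, not $\preceq_i$, on the left) — this reflects that $B_{-\be_i}B_{\be_i} = \sigma_i\inv(H_iJ_i)$ sees the break at $\sigma_i(\frm)$, so one can "land on" a break going up but crossing it kills the product. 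Everything else is the routine factorization-and-primeness argument sketched above.
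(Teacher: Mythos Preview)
Your proposal is correct and follows essentially the same approach as the paper: factor $B_{-\balpha}B_\balpha$ as the product $\prod_i I_i^{-\alpha_i}\sigma_i^{-\alpha_i}(I_i^{\alpha_i})$, use primality of $\frm$ to reduce membership in $G_\frm$ to a conjunction of one-dimensional conditions, then in each direction compute the factor explicitly as a product of translates $\sigma_i^{-k}(H_iJ_i)$ (with range depending on the sign of $\alpha_i$) and translate ``no factor contained in $\frm$'' into the break inequalities. The paper's proof is simply a more explicit execution of your case analysis, spelling out the three cases $\alpha_i=0$, $\alpha_i>0$, $\alpha_i<0$ and the chain of equivalences ending at $[\frn_i]^-\prec_i\sigma_i^{\alpha_i}(\frm)\preceq_i[\frn_i]$; the endpoint bookkeeping you flag as the main obstacle is exactly where the paper spends its effort.
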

\begin{proof}
Recall that $B_\balpha=I^{(\balpha)}t^{\balpha}=\prod_i I_i^{\alpha_i}t_i^{\alpha_i}$, and $B_{-\balpha}=I^{(-\balpha)}t^{-\balpha}=\prod_i I_i^{-\alpha_i}t_i^{-\alpha_i}$, then
\[ B_{-\balpha}B_\balpha=\left(\prod_i I_i^{-\alpha_i}t_i^{-\alpha_i}\right)\left(\prod_i I_i^{\alpha_i}t_i^{\alpha_i}\right)=\prod_iI_i^{-\alpha_i}\sigma_i^{-\alpha_i}\left(I_i^{\alpha_i}\right).\]
Since $\frm$ is maximal, $B_{-\balpha}B_\balpha\subset\frm$ if and only if there is $i$ such that $I_i^{-\alpha_i}\sigma_i^{-\alpha_i}\left(I_i^{\alpha_i}\right)\subset \frm$. Equivalently,  $B_{-\balpha}B_\balpha\not\subset\frm$ if and only for all $i$, $I_i^{-\alpha_i}\sigma_i^{-\alpha_i}\left(I_i^{\alpha_i}\right)\not\subset \frm$. We have
\[
I_i^{-\alpha_i}\sigma_i^{-\alpha_i}\left(I_i^{\alpha_i}\right)
    = \begin{cases}
    R & \text{ if }\alpha_i=0 \\ 
    \prod_{k=1}^{\alpha_i} \sigma_i^{-k}(H_iJ_i) & \text{ if }\alpha_i>0 \\ 
    \prod_{k=1}^{-\alpha_i} \sigma_i^{k-1}(H_iJ_i) & \text{ if }\alpha_i<0.\end{cases}\]
Let $\balpha=(\alpha_1,\ldots,\alpha_n)\in G_\frm$. If for an index $i$ we have $\alpha_i=0$, then this is the same as $R\not\subset\frm$, which is always satisfied just like the fact that $[\frn_i]^-\prec_i\sigma_i^{0}(\frm)\preceq_i[\frn_i]$ is always true. If $\alpha_i>0$, then this says that 
\begin{align*}
\prod_{k=1}^{\alpha_i} \sigma_i^{-k}(H_iJ_i)\not\subset\frm & \iff \sigma_i^{-k}(H_iJ_i)\not\subset\frm \qquad \forall~k=1,\ldots,\alpha_i \\
& \iff H_iJ_i\not\subset\sigma_i(\sigma_i^{k-1}(\frm)) \qquad \forall~k=1,\ldots,\alpha_i \\
& \iff\sigma_i^{k-1}(\frm) \text{ is not an $i$-break }\qquad \forall ~k=1,\ldots,\alpha_i \\ 
& \iff \sigma_i^{k-1}(\frm)\prec_i[n_i] \qquad \forall~k=1,\ldots,\alpha_i \\ 
& \iff \sigma_i^{k}(\frm)\preceq_i[n_i]\qquad \forall~k=0,\ldots,\alpha_i.
\end{align*}
Finally, if $\alpha_i<0$, then this says that 
\begin{align*}\prod_{k=1}^{-\alpha_i} \sigma_i^{k-1}(H_iJ_i)\not\subset\frm & \iff \sigma_i^{k-1}(H_iJ_i)\not\subset\frm \qquad \forall~k=1,\ldots,-\alpha_i \\
& \iff H_iJ_i\not\subset\sigma_i(\sigma_i^{-k}(\frm)) \qquad \forall~k=1,\ldots,-\alpha_i \\
& \iff\sigma_i^{k}(\frm) \text{ is not an $i$-break }\qquad \forall ~k=-1,\ldots,\alpha_i \\ 
& \iff [n_i]^-\prec_i \sigma_i^{k}(\frm) \qquad \forall~k=-1,\ldots,\alpha_i.
\qedhere\end{align*}
\end{proof}
\begin{definition}\label{def.balpha}
Let $\frm$ be a maximal ideal. For every $\balpha \in G_\frm$, since $B_{-\balpha} B_\balpha\not\subset\frm$, we choose $b_\balpha \in B_\balpha$ and $b_\balpha^* \in B_{-\balpha}$ such that $b_\balpha^* b_\balpha \notin \frm$.
As $\frm$ is maximal, then there exists $r \in R$ such that $1-rb_\balpha^*b_\balpha \in \frm$. Set $b_\balpha' = rb_\balpha^* \in B_{-\balpha}$ so that 
\begin{align}\label{eq.brel}
    b_\balpha'b_\balpha \equiv 1 \pmod{\frm}.
\end{align}
\end{definition}

\begin{lemma}\label{lem:basis}
Let $M$ be a simple weight module over $B$, such that $M_\frm\neq 0$, and let $0\neq v_{\frm}\in M_\frm$ be a weight vector, so that $\{v_\frm\}$ is a $\kk$-basis for $M_\frm$. Then the set 
$\{b_\balpha v_\frm~|~\balpha \in G_\frm\}$
is a basis for $M$ over $\kk$.
\end{lemma}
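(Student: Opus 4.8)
The plan is to show that the vectors $\{b_\balpha v_\frm \mid \balpha \in G_\frm\}$ span $M$ and are linearly independent. For linear independence, observe that by Lemma \ref{lem.Gm} and torsion-freeness of $\cO$, the ideals $\sigma_i^{\alpha_i}(\frm)$ for $\balpha \in G_\frm$ are pairwise distinct maximal ideals as $\balpha$ ranges over $G_\frm$; indeed $b_\balpha v_\frm \in M_{\bsigma^\balpha(\frm)}$ and distinct $\balpha$ give distinct $\bsigma^\balpha(\frm)$ since the orbit is torsion-free. Since $M = \bigoplus_{\frn} M_\frn$ is a direct sum over weight spaces, vectors lying in distinct weight spaces are automatically linearly independent, so it suffices to check that each $b_\balpha v_\frm \neq 0$. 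This is where Definition \ref{def.balpha} enters: applying $b_\balpha' \in B_{-\balpha}$ to $b_\balpha v_\frm$ and using $b_\balpha' b_\balpha \equiv 1 \pmod{\frm}$ together with $\frm \cdot v_\frm = 0$ gives $b_\balpha' b_\balpha v_\frm = v_\frm \neq 0$, hence $b_\balpha v_\frm \neq 0$.

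For spanning, let $N = \sum_{\balpha \in G_\frm} \kk\, b_\balpha v_\frm$. The first step is to show $N$ is a $B$-submodule of $M$; since $M$ is simple and $N \ni v_\frm \neq 0$, this forces $N = M$. To prove $N$ is a submodule, it is enough to check that $B_{\pm\be_i} \cdot b_\balpha v_\frm \subseteq N$ for every $i \in [n]$ and every $\balpha \in G_\frm$, because the $B_{\pm\be_i}$ together with $B_\bzero = R$ generate $B$ as an algebra (Lemma \ref{lem.BRgen}), and $R$ acts on each weight vector by a scalar. Fix $i$ and $\balpha$, and consider $b \in B_{\be_i}$; then $b\, b_\balpha v_\frm \in M_{\bsigma^{\balpha + \be_i}(\frm)}$. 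By Proposition \ref{prop.dim-weight}(2) this weight space is at most one-dimensional over $R/\bsigma^{\balpha+\be_i}(\frm) = \kk$. There are two cases: if $\balpha + \be_i \in G_\frm$, then $b_{\balpha+\be_i} v_\frm$ is a nonzero element of that one-dimensional space, so $b\, b_\balpha v_\frm$ is a scalar multiple of it and lies in $N$. If $\balpha + \be_i \notin G_\frm$, then I claim $b\, b_\balpha v_\frm = 0$: by Lemma \ref{lem.Gm}, $\balpha + \be_i \notin G_\frm$ means that for some index $j$, the point $\sigma_j^{\alpha_j + \delta_{ij}}(\frm)$ fails the condition $[\frn_j]^- \prec_j \,\cdot\, \preceq_j [\frn_j]$; since $\balpha$ itself satisfies all these conditions, the only way adding $\be_i$ can violate one is $j = i$ and $\sigma_i^{\alpha_i + 1}(\frm) \succ_i [\frn_i]$, i.e. $\sigma_i^{\alpha_i}(\frm) = [\frn_i]$ is exactly on the hyperplane of $i$-breaks, so $\sigma_i^{\alpha_i}(\frm)$ is an $i$-break. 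Applying Lemma \ref{lem:cross-break} to the $i$-break $\bsigma^\balpha(\frm)$ and noting $b_\balpha v_\frm \in M_{\bsigma^\balpha(\frm)}$, we get $B_{\be_i} M_{\bsigma^\balpha(\frm)} = 0$, hence $b\, b_\balpha v_\frm = 0 \in N$. The case $b \in B_{-\be_i}$ is symmetric, using the other conclusion $B_{-\be_i} M_{\sigma_i(\frn)} = 0$ of Lemma \ref{lem:cross-break} and the fact that $\balpha - \be_i \notin G_\frm$ forces $\sigma_i^{\alpha_i - 1}(\frm)$ to be below $[\frn_i]^-$, i.e. $\sigma_i^{\alpha_i}(\frm)$ sits on the break hyperplane $[\frn_i]^-$ directly.

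I expect the main obstacle to be the careful bookkeeping in the spanning step: precisely matching the failure of membership in $G_\frm$ (as characterized by the interval conditions in Lemma \ref{lem.Gm}) with the hypotheses of Lemma \ref{lem:cross-break}, and getting the direction of the $\pm\be_i$ shift and the "$\sigma_i(\frm)$ vs.\ $\frm$" indexing in \ref{lem:cross-break} to line up correctly in both the $B_{\be_i}$ and $B_{-\be_i}$ cases. The linear independence and the reduction to checking generators are routine; the crux is verifying that crossing a break hyperplane is exactly when the action kills the weight vector, so that $N$ is genuinely closed under the algebra action.
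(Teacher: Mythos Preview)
Your argument is correct and uses the same ingredients as the paper's proof (nonvanishing via $b_\balpha' b_\balpha \equiv 1$, linear independence via distinct weight spaces, one-dimensionality from Proposition~\ref{prop.dim-weight}, and Lemma~\ref{lem:cross-break} at the key step). The organization differs: the paper shows directly that $M_{\bsigma^{\bgamma}(\frm)} = B_\bgamma v_\frm = 0$ for every $\bgamma \notin G_\frm$ by factoring $B_\bgamma$ through a single $B_{\be_i}$ that crosses a break, whereas you instead verify that $N = \sum_{\balpha \in G_\frm} \kk\, b_\balpha v_\frm$ is closed under the generators $B_{\pm\be_i}$ and invoke simplicity. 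Both arrive at the same place; your version is arguably cleaner since it only ever crosses one break at a time.

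Two small points to tighten. First, when you pass from ``$\sigma_i^{\alpha_i}(\frm)$ lies on the hyperplane $[\frn_i]$'' to ``$\bsigma^\balpha(\frm)$ is an $i$-break,'' you are silently using Lemma~\ref{lem.invariant-breaks} to transport the $i$-break property along the $\Gamma_i$-orbit; this should be made explicit. Second, in the $B_{-\be_i}$ case your indexing slipped: it is $\sigma_i^{\alpha_i - 1}(\frm)$ (not $\sigma_i^{\alpha_i}(\frm)$) that sits on the hyperplane $[\frn_i]^-$, so the $i$-break is $\bsigma^{\balpha - \be_i}(\frm)$, and Lemma~\ref{lem:cross-break} then gives $B_{-\be_i} M_{\sigma_i(\bsigma^{\balpha - \be_i}(\frm))} = B_{-\be_i} M_{\bsigma^{\balpha}(\frm)} = 0$, exactly as needed. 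You correctly anticipated this as the delicate bookkeeping point.
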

\begin{proof}First of all, notice that $b_\balpha v_\frm\neq 0$ for all $\balpha\in G_\frm$ because, since $b'_{\balpha} b_\balpha\equiv 1 \pmod{\frm}$, we have
$$b'_\balpha(b_\balpha v_{\frm})=(b'_\balpha b_\balpha )v_{\frm}=v_{\frm}\neq 0.$$
Then, for $\balpha\neq \bbeta$ we have that $b_\balpha v_\frm$ and $b_\bbeta v_\frm$ belong to the different weight spaces $M_{\bsigma^{\balpha}(\frm)}$ and $M_{\bsigma^{\bbeta}(\frm)}$ respectively, so they are linearly independent over $R$, hence over $\kk$. Further, by Proposition \ref{prop.dim-weight}, since each weight space of $M$ is one-dimensional, $\{b_\balpha v_\frm\}$ is a $R/\bsigma^{\balpha}(\frm)$-basis for $M_{\bsigma^{\balpha}(\frm)}$ for all $\balpha\in G_\frm$. All that is left to prove, then, is that if $\bgamma\in \ZZ^n\setminus G_\frm$, then $M_{\bsigma^\bgamma(\frm)}=0$. Let $\bgamma=(\gamma_1,\ldots,\gamma_n)\in \ZZ^n\setminus G_\frm$, then by Lemma \ref{lem.Gm} there is an $i$ such that either $\sigma_i^{\gamma_i}(\frm)\succ_i[\frn_i]$ or such that $\sigma_i^{\gamma_i}(\frm)\preceq_i[\frn_i]^-$ (notice that for this $i$, $\gamma_i \neq 0$). We suppose we are in the first of these two cases and therefore that $\gamma_i>0$, the proof for the other case (with $\gamma_i<0$) being analogous. Let $0\leq k<\gamma_i$ be such that $\bsigma_i^{k}(\frm)\in[\frn_i]$, so that in particular $\bsigma_i^{k}(\frm)$ is an $i$-break. We have
\[
M_{\bsigma^{\bgamma}(\frm)}=B_{\bgamma}v_\frm=\left(\prod_{j=1}^n B_{\gamma_j\be_j}\right)v_{\frm}=\left(\prod_{j\neq i} B_{\gamma_j\be_j}\right)B_{\gamma_i\be_i}v_{\frm}=\left(\prod_{j\neq i} B_{\gamma_j\be_j}\right)B_{(\gamma_i-k-1)\be_i}B_{\be_i}B_{k\be_i}v_{\frm}.
\]
Now, $B_{k\be_i}v_{\frm}=M_{\sigma_i^{k}(\frm)}$ and, since $\sigma_i^{k}(\frm)$ is an $i$-break we have by Lemma \ref{lem:cross-break} that 
$$B_{\be_i}B_{k\be_i}v_{\frm}=B_{\be_i} M_{\sigma_i^{k}(\frm)}=0$$
hence $M_{\bsigma^{\bgamma}(\frm)}=0$.
\end{proof}

\begin{figure}
\begin{subfigure}{.45\textwidth}
\centering
    \begin{tikzpicture}[scale = .7]
\draw[red, line width=.5mm] (2, -3)--(2,5);
\draw[red, line width=.5mm] (-2, -3)--(-2,5);
\draw[blue, line width=.5mm] (-5, 4)--(5,4);
\draw[blue, line width=.5mm] (-5, -2)--(5,-2);
\foreach \y in {-1, 0, 1, 2}{\draw[help lines, dashed] (-5, 2*\y)--(5, 2*\y);}
\foreach \x in {-2, -1,0,1,2}{\draw[help lines, dashed] (2*\x, -3)--(2*\x, 5);}
\node[font = \scriptsize, anchor = north west] at (0, 0){\scriptsize $\mathfrak{m}$};
\fill (0, 0) circle (3pt);
\node[font = \scriptsize, anchor = north west] at (2, 0){\scriptsize $\sigma_1(\mathfrak{m})$};
\fill (2, 0) circle (3pt);
\node[font = \scriptsize, anchor = north west] at (2, 2){\scriptsize $\sigma_1\sigma_2(\mathfrak{m})$};
\fill (2, 2) circle (3pt);
\node[font = \scriptsize, anchor = north west] at (2, 4){\scriptsize $\sigma_1\sigma_2^2(\mathfrak{m})$};
\fill (2, 4) circle (3pt);
\node[font = \scriptsize, anchor = north west] at (0, 2){\scriptsize $\sigma_2(\mathfrak{m})$};
\fill (0, 2) circle (3pt);
\node[font = \scriptsize, anchor = north west] at (0, 4){\scriptsize $\sigma_2^2(\mathfrak{m})$};
\fill (0, 4) circle (3pt);

\draw [draw=black, fill = black, opacity = .2] (2.4,4.4) rectangle (-0.4,-0.4);
\end{tikzpicture}
\caption{$G_{\frm}$ or $M\left(\mathcal{O}, ([\sigma_1(\frm)], [\sigma_2^2(\frm)])\right)$}
\end{subfigure}
\begin{subfigure}{.45\textwidth}
\centering
    \begin{tikzpicture}[scale = .7]
\draw[red, line width=.5mm] (2, -3)--(2,5);
\draw[red, line width=.5mm] (-2, -3)--(-2,5);
\draw[blue, line width=.5mm] (-5, 4)--(5,4);
\draw[blue, line width=.5mm] (-5, -2)--(5,-2);
\foreach \y in {-1, 0, 1, 2}{\draw[help lines, dashed] (-5, 2*\y)--(5, 2*\y);}
\foreach \x in {-2, -1,0,1,2}{\draw[help lines, dashed] (2*\x, -3)--(2*\x, 5);}
\node[font = \scriptsize, anchor = north west] at (-2, 2){\scriptsize $\sigma_1^{-1}\sigma_2(\mathfrak{m})$};
\fill (-2, 2) circle (3pt);
\node[font = \scriptsize, anchor = north west] at (-2, 0){\scriptsize $\sigma_1^{-1}(\mathfrak{m})$};
\fill (-2, 0) circle (3pt);
\node[font = \scriptsize, anchor = north west] at (-2, 4){\scriptsize $\sigma_1^{-1}\sigma_2^2(\mathfrak{m})$};
\fill (-2, 4) circle (3pt);
\node[font = \scriptsize, anchor = north west] at (-4, 2){\scriptsize $\sigma_1^{-2}\sigma_2(\mathfrak{m})$};
\fill (-4, 2) circle (3pt);
\node[font = \scriptsize, anchor = north west] at (-4, 0){\scriptsize $\sigma_1^{-2}(\mathfrak{m})$};
\fill (-4, 0) circle (3pt);
\node[font = \scriptsize, anchor = north west] at (-4, 4){\scriptsize $\sigma_1^{-2}\sigma_2^2(\mathfrak{m})$};
\fill (-4, 4) circle (3pt);
\draw [draw=black, fill = black, opacity = .2] (-1.6,-.4) rectangle (-5.6,4.4);
\end{tikzpicture}
\caption{$G_{\sigma_1\inv(\frm)}$ or $M\left(\mathcal{O}, ([\sigma_1\inv(\frm)], [\sigma_2^2(\frm)])\right)$}
\end{subfigure}
\caption{In the terminology of Lemma~\ref{lem.Gm}, the maximal ideals contained in the grey rectangle in the left-hand figure are precisely those $\sigma^\balpha(\frm)$, such that $\balpha\in G_\frm$.
In the right-hand figure, the grey rectangle extends infinitely to the left, representing those $\sigma^\balpha(\sigma_1\inv(\frm))$ such that $\balpha\in G_{\sigma_1\inv(\frm)}$. This picture can also be interpreted, in the terminology of Definition~\ref{def.simplemod}, the left-hand figure depicts $M\left(\mathcal{O}, ([\sigma_1(\frm)], [\sigma_2^2(\frm)])\right)$ while the right-hand figure depicts $M\left(\mathcal{O}, ([\sigma_1\inv(\frm)], [\sigma_2^2(\frm)])\right)$.}
\end{figure}
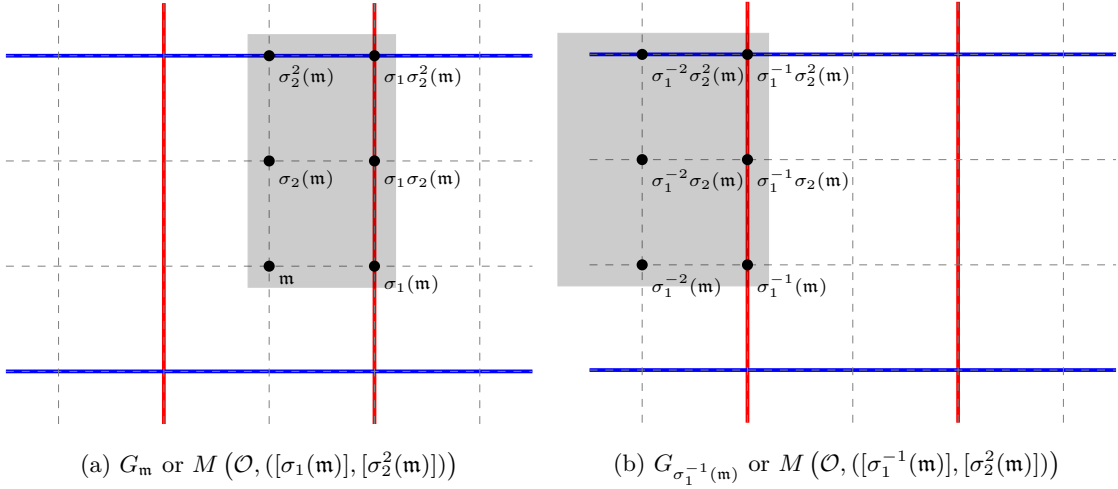

The following lemma is an analogue of \cite[Prop 7.2]{MPT}.

\begin{lemma}\label{lem:exist}Let $\frm$ be a maximal ideal, then there exists a simple weight module $M$ over $B$ such that $M_\frm\neq 0$.
\end{lemma}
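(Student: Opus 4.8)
### Proof plan for Lemma \ref{lem:exist}

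The plan is to build $M$ by hand as a quotient of the ``standard module'' $B/B\frm$ (or more precisely of the cyclic module generated by a weight vector at $\frm$), then cut down by a maximal submodule avoiding that weight space, and finally verify that the resulting simple quotient still has nonzero $\frm$-weight space. Concretely, first I would form the left $B$-module $N = B \otimes_R (R/\frm)$, where $R/\frm$ is viewed as an $R$-module. Since $B = \bigoplus_{\balpha} I^{(\balpha)}\bt^\balpha$ is free as a right $R$-module in each graded component (it sits inside the skew Laurent ring, which is free over $R$), $N$ is a weight module with $N_{\bsigma^\balpha(\frm)} \supseteq B_\balpha \otimes (R/\frm)$, and in particular $N_\frm \ni 1 \otimes \bar 1 =: w \neq 0$ because $B_\bzero = R$ acts through $R/\frm$. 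So $N$ is a nonzero weight module with $w \in N_\frm$ a cyclic generator.

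Next I would locate a maximal submodule. Let $\Sigma$ be the set of $B$-submodules $N' \subseteq N$ with $w \notin N'$, equivalently (since $N = Bw$) with $N' \cap N_\frm = 0$ — here I use that $N_\frm$ is one-dimensional over $R/\frm$, which follows from Proposition \ref{prop.dim-weight}\eqref{dim1} applied to $N$ (or directly: any element of $N_\frm$ is $r\cdot w$ for $r \in R$, by the grading and the torsion-free hypothesis, so a submodule meeting $N_\frm$ nontrivially contains $w$). The set $\Sigma$ is nonempty ($0 \in \Sigma$) and closed under unions of chains, since a union of submodules each avoiding $w$ still avoids $w$; by Zorn's Lemma pick a maximal element $N_{\max} \in \Sigma$. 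Set $M = N/N_{\max}$. Then $M_\frm \neq 0$ because $w \notin N_{\max}$ and the image $\bar w$ spans $M_\frm$.

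Finally I would check $M$ is simple as a weight module, equivalently (since $M$ lies in $(B,R)\wmod_\cO$ which splits off as a direct factor, by the discussion before Definition \ref{defn.ibreak}) that $M$ has no nonzero proper $B$-submodule. Suppose $0 \neq M' \subsetneq M$ is a submodule; being a weight module it has a nonzero weight space, say $M'_{\bsigma^\balpha(\frm)} \neq 0$ for some $\balpha \in \ZZ^n$. I must produce a nonzero element of $M'_\frm$, which would force $\bar w \in M'$ (as $M_\frm = (R/\frm)\bar w$) and hence $M' = M$, a contradiction; so it suffices to show that from any nonzero weight vector in $M$ one can act by an element of $B$ to get back to a nonzero vector in $M_\frm$. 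This is not automatic — it is exactly the kind of ``return to the base weight space'' statement that can fail, and this is the main obstacle. The way around it: $M'$ corresponds to a submodule $N' \supsetneq N_{\max}$ of $N$, and by maximality of $N_{\max}$ in $\Sigma$ we get $w \in N'$, i.e. $\bar w \in M'$, so $M' = M$ directly. Thus the only real content is setting up $N$, verifying $N_\frm = (R/\frm)w$ is one-dimensional and $w$ generates, and invoking Zorn plus maximality; the potential pitfall (an ``escaping'' submodule that never returns to $M_\frm$) is precisely what the choice of $N_{\max}$ as maximal-avoiding-$w$ rules out, rather than something needing the break combinatorics of Lemma \ref{lem:cross-break}.

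One point I would double-check carefully: that $N = B \otimes_R (R/\frm)$ really is a weight module, i.e. that it decomposes as $\bigoplus_{\balpha} B_\balpha \otimes_R (R/\frm)$ with each summand killed by a single maximal ideal. Since $B_\balpha = I^{(\balpha)}\bt^\balpha$ and $R$ is commutative, $B_\balpha \otimes_R (R/\frm) = I^{(\balpha)}\bt^\balpha \otimes_R (R/\frm)$, on which $R$ acts (on the left) via $\bsigma^\balpha$, hence is annihilated by $\bsigma^{-\balpha}(\frm)$ — wait, one must track whether the left $R$-action on $I^{(\balpha)}\bt^\balpha \otimes_R(R/\frm)$ is through $\frm$ or $\bsigma^\balpha(\frm)$; the relation $t_i r = \sigma_i(r)t_i$ shows $r\cdot(a\bt^\balpha \otimes \bar s) = (\bsigma^\balpha)^{-1}(r) a \bt^\balpha\otimes\bar s$ modulo moving scalars across $\otimes_R$, so the weight is $\bsigma^\balpha(\frm)$, consistent with $B_\balpha M_\frm \subseteq M_{\bsigma^\balpha(\frm)}$. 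The torsion-free hypothesis on $\cO$ guarantees these weights are distinct for distinct $\balpha$, so the sum is genuinely a weight-space decomposition. With that in hand the argument above goes through.
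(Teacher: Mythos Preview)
Your proposal is correct and follows essentially the same route as the paper's proof: form $N = B \otimes_R R/\frm$, verify it is a weight module with $N_\frm \simeq R/\frm$ generated by $w = 1 \otimes \bar 1$, quotient by a largest submodule missing $N_\frm$, and use that any strictly larger submodule must hit $N_\frm$ and hence contain the generator. The only cosmetic difference is that the paper takes $N'$ to be the \emph{sum} of all weight submodules $S$ with $S \cap N_\frm = 0$ (which is again such a submodule, hence automatically maximal), whereas you invoke Zorn's Lemma on the poset $\Sigma$; since submodules of weight modules over commutative $R$ are themselves weight submodules, these produce the same $N_{\max}$, and your Zorn step is unnecessary but harmless.
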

\begin{proof}Define the left $B$-module $N:=B\otimes_R R/\frm=\bigoplus_{\balpha\in\ZZ^n}B_\balpha\otimes_R R/\frm$. Let $r\in R$, and let $0\neq x_\balpha t^{\balpha}\in B_\balpha$, then
$$r\cdot x_\balpha t^\balpha\otimes (1_R+\frm)=x_{\balpha} t^{\balpha}\otimes \bsigma^{-\balpha}(r)(1_R+\frm)$$
which equals zero if and only if $\bsigma^{-\balpha}(r)\in\frm$ if and only if $r\in\sigma^{\balpha}(\frm)$. This proves that $N$ is a weight module, with $N_{\bsigma^{\balpha}(\frm)}=B_\balpha\otimes_R R/\frm$. Further, we have that $N_\frm=B_{\bzero}\otimes_R R/\frm=R\otimes_R R/\frm\simeq R/\frm\neq 0$ is a nonzero weight space and $N=BN_\frm$. Now define a weight submodule 
$$N':=\left\{\sum S~|~S\subset N \text{ is a weight submodule and }S\cap N_\frm=0\right\}.$$ We claim that $M=N/N'$ is the required simple weight module for $B$ with $M_\frm\neq 0$. Since $N'_{\frm}=0$, it is clear that $M_\frm\neq 0$. Suppose we have a non trivial weight submodule $0\neq K\subseteq N/N'$. This is the same as a weight submodule $N'\subsetneq K\subseteq N$, but since $N'\subsetneq K$ we have $K_\frm\neq 0$, which implies that $K_\frm=N_\frm=B_{\bzero}\otimes_R R/\frm$, hence $K=BK\supset BK_\frm=BN_\frm=N$, which implies that $K=N$. This proves that $M=N/N'$ is indeed a simple weight module.
\end{proof}

\begin{theorem}\label{thm.weight}
Let $\cO\in\Maxspec(R)/\ZZ^n$ be a torsion-free orbit, let $\beta_i \subseteq \cO$ be the set of $i$-breaks for each $i$, and keep the notation above. The isomorphism classes of simple modules in $(B,R)\wmodO$ are in bijection with the set $\prod_{i=1}^n\overline{\beta}'_i$. If $M\in (B,R)\wmodO$ is simple, it corresponds to the unique $n$-tuple $([\frn_1],\ldots,[\frn_n])\in \prod_{i=1}^n\overline{\beta}'_i$ such that $\supp(M)=\{\frm\in\cO~|~[\frn_i]^-\prec_i\frm\preceq_i[\frn_i]\text{ for all }i=1,\ldots,n\}$.
\end{theorem}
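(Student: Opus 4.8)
The plan is to construct an explicit bijection $\Psi$ from the set of isomorphism classes of simple objects of $(B,R)\wmodO$ to $\prod_{i=1}^n\overline{\beta}'_i$, using Lemmas~\ref{lem.Gm}, \ref{lem:basis}, and \ref{lem:exist} as the main inputs. To define $\Psi$, take a simple $M\in(B,R)\wmodO$; it is nonzero, so choose $\frm\in\supp(M)$, and then $M=B\cdot M_\frm$ with $M_\frm\neq0$. By Lemma~\ref{lem:basis}, $\supp(M)=\{\bsigma^\balpha(\frm)\mid\balpha\in G_\frm\}$, and by Lemma~\ref{lem.Gm} there is a unique tuple $([\frn_1],\dots,[\frn_n])\in\prod_i\overline{\beta}'_i$ with $[\frn_i]^-\prec_i\frm\preceq_i[\frn_i]$ for all $i$, with $G_\frm$ described explicitly in terms of it; combining the two statements gives
\[
\supp(M)=\{\frm'\in\cO\mid[\frn_i]^-\prec_i\frm'\preceq_i[\frn_i]\text{ for all }i\}.
\]
Set $\Psi(M)=([\frn_1],\dots,[\frn_n])$. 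This is independent of the choice of $\frm\in\supp(M)$, since any $\frm'\in\supp(M)$ satisfies the same slab inequalities by the displayed description, so the uniqueness clause of Lemma~\ref{lem.Gm} returns the same tuple; and $\Psi$ visibly depends only on the isomorphism class of $M$.

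For injectivity I would isolate the claim that for a fixed $\frm\in\cO$ there is, up to isomorphism, at most one simple weight module $M$ with $M_\frm\neq0$. By Proposition~\ref{prop.dim-weight}, $M_\frm$ is a nonzero $R/\frm$-module of dimension at most one, so $M_\frm\cong R/\frm$ as an $R$-module; the action map then induces a surjection of weight modules $N:=B\otimes_R R/\frm\twoheadrightarrow B\cdot M_\frm=M$ which restricts to an isomorphism on the $\frm$-weight spaces, so its kernel $L$ is a weight submodule with $L\cap N_\frm=0$ and hence lies inside the canonical submodule $N'$ appearing in the proof of Lemma~\ref{lem:exist}. Since $N/N'$ is simple and $M\cong N/L$ surjects onto the nonzero module $N/N'$, we get $M\cong N/N'$. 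Consequently, if $\Psi(M)=\Psi(M')$, then $\supp(M)=\supp(M')$ by the displayed formula, and this common nonempty set contains some $\frm$ at which both $M$ and $M'$ have nonzero weight space, so $M\cong M'$.

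For surjectivity, given $([\frn_1],\dots,[\frn_n])\in\prod_i\overline{\beta}'_i$, I would first produce an $\frm\in\cO$ with $[\frn_i]^-\prec_i\frm\preceq_i[\frn_i]$ for all $i$: fixing a base point identifies $\cO$ with $\ZZ^n$ so that $\sigma_i$ acts by translation by $\be_i$, and by Lemma~\ref{lem.invariant-breaks} each $\beta_i$ is a $\Gamma_i$-invariant union of coordinate hyperplanes $\{\alpha_i=c\}$, whence the two conditions on $\frm$ amount, in each coordinate $i$ separately, to requiring $\alpha_i$ to lie in a nonempty interval of $\ZZ$ determined by $[\frn_i]^-$ and $[\frn_i]$ (with $[\frn_i]^-=-\infty_i$, resp.\ $[\frn_i]=\infty_i$, imposing no lower, resp.\ upper, bound), and these choices can be made independently since the $\sigma_i$ commute. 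Then Lemma~\ref{lem:exist} supplies a simple weight module $M$ with $M_\frm\neq0$, and computing $\Psi(M)$ with this very $\frm$, via the uniqueness in Lemma~\ref{lem.Gm}, gives $\Psi(M)=([\frn_1],\dots,[\frn_n])$. The step I expect to be the main obstacle is the uniqueness sub-claim inside injectivity---pinning down an arbitrary simple weight module with prescribed nonzero weight space as the canonical quotient $N/N'$---together with the bookkeeping that translates the order-theoretic language of breaks and hyperplanes into concrete inequalities on lattice coordinates, needed both for the well-definedness of $\Psi$ and for locating $\frm$ in the surjectivity step; the rest is assembly of the earlier lemmas.
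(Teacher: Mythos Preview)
Your proof is correct and follows the same overall architecture as the paper's: define $\Psi$ via the support description coming from Lemmas~\ref{lem.Gm} and~\ref{lem:basis}, establish well-definedness, then surjectivity via Lemma~\ref{lem:exist}, then injectivity.

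The one place you diverge is injectivity. The paper, having fixed $\frm$ in the common support, writes down the explicit linear map $b_\balpha v_\frm\mapsto b_\balpha v'_\frm$ between the bases supplied by Lemma~\ref{lem:basis} and declares it to be the required isomorphism. You instead show the stronger statement that any simple weight module with $M_\frm\neq 0$ is canonically isomorphic to $N/N'$, where $N=B\otimes_R R/\frm$ and $N'$ is the submodule built in the proof of Lemma~\ref{lem:exist}. Your route has the advantage of sidestepping the verification that the paper's explicit map is actually a $B$-module homomorphism (a point the paper suppresses); the trade-off is that you rely on an object ($N'$) defined only inside another proof rather than in a stated result. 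You also spell out more explicitly than the paper why, for a given tuple in $\prod_i\overline{\beta}'_i$, a point $\frm$ in the corresponding slab exists---the paper's surjectivity line simply cites Lemma~\ref{lem:exist} and leaves this step implicit.
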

\begin{proof}
Consider the map from isomorphism classes of simple weight modules to $n$-tuples of breaks as in the statement. This map is well defined because, by Lemma \ref{lem:basis}, if $M$ is a simple weight module with $M_{\frm}\neq 0\neq M_{\frm'}$, then $\frm'=\bsigma^{\bbeta}(\frm)$ for some $\bbeta\in G_{\frm}$, hence, by Lemma \ref{lem.Gm}, $\frm'$ satisfies the same inequalities with respect to the breaks $[\frn_i]$ as $\frm$ does. Also, this map is surjective by Lemma \ref{lem:exist}, so we only need to show that it is injective. Suppose that we have two simple weight modules $M$ and $M'$ that map to the same $n$-tuple of breaks, then they have the same support. Let $\frm$ be a maximal ideal such that $M_\frm\neq 0\neq M'_\frm$ and we can choose $b_\balpha$, $\balpha\in G_\frm$, and $0\neq v_\frm\in M_\frm$, $0\neq v'_\frm\in M'_\frm$ such that $\{b_\balpha v_\frm~|~\balpha\in G_\frm\}$ is a $\kk$-basis for $M$ and $\{b_\balpha v'_\frm~|~\balpha\in G_\frm\}$ is a $\kk$-basis for $M'$. Then, the map defined by $b_\balpha v_\frm\mapsto b_\balpha v'_\frm$ for all $\balpha\in G_\frm$ is the required isomorphism of weight modules.
\end{proof}

\begin{definition}
\label{def.simplemod}
For each $\underline{[\frn]}=([\frn_1],\ldots,[\frn_n])\in \prod_{i=1}^n\overline{\beta}'_i$, denote by $M(\cO,\underline{[\frn]})$ the corresponding simple weight module.
\end{definition}

We can give a somewhat explicit description of the structure constants for the action of $B$ on $M(\cO,\underline{[\frn]})$, depending on the choice of $b_\balpha,b'_\balpha$, compare the following result to \cite[Thm 5.4]{hart1} for TGWAs.

\begin{proposition}
    Let $\underline{[\frn]}=([\frn_1],\ldots,[\frn_n])\in \prod_{i=1}^n\overline{\beta}'_i$. 
    Choose $\frm$ to be a maximal ideal such that $[\frn_i]^-\prec_i\frm\preceq_i[\frn_i]$ for all $i$, and choose $b_\balpha,b'_\balpha$, $\balpha\in G_\frm$ as in Definition \ref{def.balpha}. We have an isomorphism of $R$-modules
\[M(\cO,\underline{[\frn]})\simeq\bigoplus_{\balpha\in G_\frm}\left(R/\bsigma^{\balpha}(\frm)\right)( v_{\frm,\balpha})\qquad \]
 with $B$-action defined by 
\[
j_it_i  v_{\frm,\balpha} = \begin{cases}
    j_i x_{\balpha,i} v_{\frm,\balpha+\be_i} & \balpha+\be_i \in G_\frm \\
     0 & \text{otherwise,}
 \end{cases} \qquad
\sigma_i\inv(h_i)t_i\inv  v_{\frm,\balpha} = \begin{cases}
    \sigma_i\inv(h_i)y_{\balpha,i} v_{\frm,\balpha-\be_i} & \balpha-\be_i \in G_\frm \\
     0 & \text{otherwise,}
 \end{cases}
\]
for all $j_i \in J_i$ and $h_i \in H_i$
where $x_{\balpha,i} = t_i b_\balpha b_{\balpha+\be_i}'$
and
$y_{\balpha,i} = t_i\inv b_\balpha b_{\balpha-\be_i}'$ are elements of $B_\bzero=R$.
\end{proposition}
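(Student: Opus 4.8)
The plan is to transport the already-known $\kk$-basis structure of the simple module $M := M(\cO,\underline{[\frn]})$ onto the stated direct sum and then compute the action of the algebra generators directly. Fix a nonzero weight vector $v_\frm \in M_\frm$ (it exists because $\frm \in \supp(M)$ by Theorem \ref{thm.weight} and our choice of $\frm$), and (we may take $b_\bzero = b'_\bzero = 1$) set $v_{\frm,\balpha} := b_\balpha v_\frm$ for every $\balpha \in G_\frm$, so that $v_{\frm,\bzero} = v_\frm$. By Lemma \ref{lem:basis} each $v_{\frm,\balpha}$ is nonzero and $\{v_{\frm,\balpha} : \balpha \in G_\frm\}$ is a $\kk$-basis of $M$; by Proposition \ref{prop.dim-weight} the weight space $M_{\bsigma^\balpha(\frm)}$ is one-dimensional over the residue field $R/\bsigma^\balpha(\frm)$, and $M_{\bsigma^\bgamma(\frm)} = 0$ for $\bgamma \notin G_\frm$. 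Hence $M_{\bsigma^\balpha(\frm)} = (R/\bsigma^\balpha(\frm))\, v_{\frm,\balpha}$ as $R$-modules, and summing over $\balpha \in G_\frm$ yields the asserted isomorphism of $R$-modules; the $R$-action on the $\balpha$-th summand factors through $R \twoheadrightarrow R/\bsigma^\balpha(\frm)$ precisely because $v_{\frm,\balpha}$ is a weight vector of weight $\bsigma^\balpha(\frm)$.

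Next I would confirm that $x_{\balpha,i}$ and $y_{\balpha,i}$ really are elements of $B_\bzero = R$, which is a degree count in $R_\bp[\bt^{\pm 1};\bsigma]$. Since $b_\balpha \in B_\balpha$ and $b'_{\balpha+\be_i} \in B_{-\balpha-\be_i}$, the product $b_\balpha b'_{\balpha+\be_i}$ lies in $B_{-\be_i} = \sigma_i\inv(H_i)t_i\inv$ by Lemma \ref{lem.BRgen}; writing it as $\sigma_i\inv(h)t_i\inv$ with $h \in H_i$ and using $t_i\, r = \sigma_i(r)\, t_i$ gives $x_{\balpha,i} = t_i b_\balpha b'_{\balpha+\be_i} = h \in H_i \subseteq R$. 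Symmetrically, $b_\balpha b'_{\balpha-\be_i} \in B_{\be_i} = J_i t_i$, say equal to $j t_i$ with $j \in J_i$, whence $y_{\balpha,i} = t_i\inv b_\balpha b'_{\balpha-\be_i} = \sigma_i\inv(j) \in \sigma_i\inv(J_i) \subseteq R$.

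The main step is to verify the two action formulas. Fix $\balpha \in G_\frm$, $i \in [n]$, and $j_i \in J_i$; since $j_i t_i \in B_{\be_i}$, the vector $j_i t_i\, v_{\frm,\balpha} = j_i t_i b_\balpha\, v_\frm$ lies in $M_{\bsigma^{\balpha+\be_i}(\frm)}$. If $\balpha+\be_i \notin G_\frm$, this weight space is zero, giving the ``otherwise'' case. If $\balpha+\be_i \in G_\frm$, then
\[
j_i x_{\balpha,i}\, v_{\frm,\balpha+\be_i} = j_i t_i b_\balpha\, b'_{\balpha+\be_i} b_{\balpha+\be_i}\, v_\frm = j_i t_i b_\balpha\, v_\frm = j_i t_i\, v_{\frm,\balpha},
\]
where the middle equality uses \eqref{eq.brel}: $b'_{\balpha+\be_i} b_{\balpha+\be_i} = 1 + m$ with $m \in \frm$, and $m\, v_\frm = 0$ because $v_\frm \in M_\frm$. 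The verification for $\sigma_i\inv(h_i)t_i\inv$ is word-for-word the same with $\balpha+\be_i$ replaced by $\balpha-\be_i$. Since $B$ is generated over $R$ by the elements $\{j_i t_i, \sigma_i\inv(h_i)t_i\inv : i \in [n]\}$ (Lemma \ref{lem.BRgen}), these formulas together with the $R$-module structure determine the $B$-action completely, so the $R$-module isomorphism is in fact an isomorphism of $B$-modules.

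I do not anticipate a genuine obstacle here; the only points needing care are bookkeeping: checking that $v_{\frm,\balpha}$ spans the weight space over $R/\bsigma^\balpha(\frm)$ rather than merely over $\kk$, recognizing $x_{\balpha,i}$ and $y_{\balpha,i}$ as elements of $R$ before letting them act on a weight vector, and observing that reducing modulo $\bsigma^{\balpha\pm\be_i}(\frm)$ is harmless — the relations $b'_\bbeta b_\bbeta \equiv 1$ hold only modulo $\frm$, but they are always applied to $v_\frm \in M_\frm$, so the error terms annihilate it.
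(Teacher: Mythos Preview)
Your proposal is correct and follows essentially the same approach as the paper: define $v_{\frm,\balpha}=b_\balpha v_\frm$, invoke Lemma~\ref{lem:basis} for the $R$-module decomposition, and compute the action of $j_it_i$ and $\sigma_i\inv(h_i)t_i\inv$ by inserting $b'_{\balpha\pm\be_i}b_{\balpha\pm\be_i}\equiv 1\pmod\frm$. The only minor differences are that you add an explicit degree check that $x_{\balpha,i},y_{\balpha,i}\in R$ (which the paper asserts without proof), and for the ``otherwise'' case you appeal directly to the vanishing of the target weight space from Lemma~\ref{lem:basis}, whereas the paper instead identifies $\bsigma^\balpha(\frm)$ as an $i$-break via Lemma~\ref{lem.Gm} and applies Lemma~\ref{lem:cross-break}; both routes are valid and amount to the same thing.
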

\begin{proof}
Let $\frm$ be a maximal ideal as in the statement, and choose $b_\balpha, b'_\balpha$ for $\balpha\in G_\frm$. Let $0\neq v_\frm$ be any vector in $M(\cO,\underline{[\frn]})_\frm$, and define $v_{\frm,\balpha}:=b_\balpha v_\frm$ for all $\balpha\in G_\frm$. By Lemma \ref{lem:basis}, $\{v_{\frm,\balpha}~|~\balpha\in G_\frm \}$ is a $\kk$-basis of $M(\cO,\underline{[\frn]})$, and since $b_\balpha\in B_\balpha$, it is clear that $v_{\frm,\balpha}\in M(\cO,\underline{[\frn]})_{\sigma^{\balpha}(\frm)}$, hence the isomorphism as $R$-modules.
Now, if $\balpha\in G_\frm$, $\balpha+\be_i\not\in G_\frm$, then $\bsigma^{\balpha}(\frm)$ is an $i$-break by Lemma \ref{lem.Gm}, hence by Lemma \ref{lem:cross-break} we have indeed that $j_it_i  v_{\frm,\balpha}=0$. If $\balpha+\be_i\in G_\frm$, then, since $b_{\balpha+\be_i}'b_{\balpha+\be_i}\equiv 1 \pmod \frm$,
$$ j_it_i  v_{\frm,\balpha}=j_it_i b_{\balpha} v_{\frm}=j_it_ib_{\balpha}b_{\balpha+\be_i}'b_{\balpha+\be_i}v_{\frm}=j_i x_{\balpha,i} v_{\frm,\balpha+\be_i}.$$
If $\balpha\in G_\frm$, $\balpha-\be_i\not\in G_\frm$, then $\sigma_i\inv\sigma^{\balpha}(\frm)$ is an $i$-break by Lemma \ref{lem.Gm}, hence by Lemma \ref{lem:cross-break} we have indeed that $\sigma_i\inv(h_i)t_i\inv  v_{\frm,\balpha}=0$. If $\balpha-\be_i\in G_\frm$, then, since $b_{\balpha-\be_i}'b_{\balpha-\be_i}\equiv 1 \pmod \frm$,
\[\sigma_i\inv(h_i)t_i\inv  v_{\frm,\balpha}=\sigma_i\inv(h_i)t_i\inv b_{\balpha} v_{\frm}=\sigma_i\inv(h_i)t_i\inv b_{\balpha}b_{\balpha-\be_i}'b_{\balpha-\be_i}v_{\frm}=  \sigma_i\inv(h_i)y_{\balpha,i} v_{\frm,\balpha-\be_i}.\qedhere\]
\end{proof}

Let $M\in (B,R)\wmodO$ with $\cO=\ZZ^n\cdot \frm$ a torsion-free orbit. As in the rank one case \cite{GRW1}, $M$ may be given the structure of a $\ZZ^n$-graded $B$-module by defining $M_\balpha=M_{\sigma^\balpha(\frm)}$ for all $\balpha\in\ZZ^n$. The grading, however, is not canonical. Choosing different points in the orbit to correspond to $\bzero$ recovers shifts of $M$ in the graded module category. However, as shown in \cite{GRW1}, a (non-simple) graded module need not necessarily be an $R$-weight module. 

\begin{proposition}
Let $M$ be a simple $\ZZ^n$-graded $B$-module. Then $M$ is an $R$-weight module.
\end{proposition}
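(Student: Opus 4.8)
The plan is to mimic the analogous result for rank-one BR algebras from \cite{GRW1}, adapting the argument to the $\ZZ^n$-graded setting. Let $M$ be a simple $\ZZ^n$-graded $B$-module. Fix a nonzero homogeneous component $M_\balpha$; since $B_\bzero = R$ acts on each $M_\bbeta$, each graded piece is an $R$-module. It suffices to show that $R$ acts on each $M_\bbeta$ through a quotient $R/\frm$ for some maximal ideal $\frm$, i.e.\ that $\Ann_R(M_\bbeta)$ is a maximal ideal and $M_\bbeta$ is a simple $R$-module; then reindexing the orbit shows $M$ is a weight module.

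First I would reduce to a single graded component using simplicity and the grading: the submodule generated by any nonzero homogeneous element is all of $M$, so $B_{\bgamma} M_\balpha = M_{\balpha+\bgamma}$ for the relevant shifts, and in particular each nonzero $M_\bbeta$ is obtained from $M_\balpha$ by applying products of the operators $j_i t_i$ and $\sigma_i\inv(h_i) t_i\inv$. Since these operators intertwine the $R$-action with $\sigma_i^{\pm 1}$, it is enough to analyze a single nonzero component, say $M_\bzero$, as an $R$-module. The key claim is that $M_\bzero$ is a \emph{simple} $R$-module. Suppose $0 \neq N \subsetneq M_\bzero$ is an $R$-submodule; I would then consider $BN$, the $B$-submodule it generates, and argue that $(BN)_\bzero = N$: an element of $(BN)_\bzero$ is a sum of terms $b_{-\bgamma} b_{\bgamma} n$ with $b_{\pm\bgamma} \in B_{\pm\bgamma}$ and $n \in N$, and since $b_{-\bgamma} b_\bgamma \in B_\bzero = R$, each such term lies in $RN = N$. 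Thus $BN$ is a proper nonzero graded submodule (proper since its degree-$\bzero$ part is $N \neq M_\bzero$, nonzero since it contains $N$), contradicting simplicity. Hence $M_\bzero$ is simple over $R$, so $\Ann_R(M_\bzero) = \frm$ is maximal and $M_\bzero \cong R/\frm$.

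Finally, I would push this to all components. For $\balpha \in \ZZ^n$ with $M_\balpha \neq 0$, the same argument (or transport of structure via the grading-shift automorphisms $\sigma_i^{\pm}$, which send $\Ann_R(M_\bzero)$ to $\Ann_R$ of neighboring components) shows $M_\balpha \cong R/\bsigma^{\balpha}(\frm)$ as an $R$-module, where $\frm = \Ann_R(M_\bzero)$; note $M_\balpha$ is annihilated by $\bsigma^\balpha(\frm)$ precisely because $b_\balpha r = \bsigma^\balpha(r) b_\balpha$ for $b_\balpha \in B_\balpha$, $r \in R$. Therefore $M = \bigoplus_\balpha M_\balpha$ is a direct sum of one-dimensional weight spaces supported on the orbit $\cO = \ZZ^n \cdot \frm$, so $M \in (B,R)\wmod_\cO$. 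The main obstacle I anticipate is the bookkeeping in showing $(BN)_\bzero = N$ cleanly — one must be careful that products $B_{-\bgamma} B_{\bgamma}$ genuinely land back in $R = B_\bzero$ (which follows from the $\ZZ^n$-grading established in Lemma~\ref{lem.BRgen}) and that no "longer" compositions sneak extra elements into the degree-$\bzero$ part; but this is exactly the same phenomenon that made the argument work in rank one, and the grading makes it go through verbatim.
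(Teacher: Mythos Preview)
Your proposal is correct and follows essentially the same argument as the paper: show each nonzero graded piece is a simple $R$-module by noting that a proper $R$-submodule $N$ would generate a proper graded $B$-submodule $BN$, then use the commutation relation $b_\bbeta r = \bsigma^\bbeta(r)\, b_\bbeta$ to identify $\Ann_R(M_\bbeta) = \bsigma^{\bbeta-\balpha}(\frm)$. One small simplification: since $N \subseteq M_\bzero$ is homogeneous, the graded piece $(BN)_\bzero$ is just $B_\bzero N = RN = N$ directly---there is no need to consider terms of the form $b_{-\bgamma}b_\bgamma n$.
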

\begin{proof}
Write $M=\bigoplus_{\balpha\in\ZZ^n}M_\balpha$ so that $B_\bbeta M_\balpha\subseteq M_{\balpha+\bbeta}$ for all $\balpha,\bbeta \in \ZZ^n$. If $\balpha\in\ZZ^n$ such that $M_\balpha\neq 0$, then $M_\balpha$ is an $R$-submodule since $RM_\balpha=B_\bzero M_\balpha\subseteq M_\balpha$. We claim $M_\balpha$ is a simple $R$-module. 

Suppose $N_\balpha$ is a proper, nontrivial submodule of $M_\balpha$. Note that $(BN_\balpha)_\balpha=N_\balpha$ and $0\neq BN_\balpha\subsetneq M$, so $BN_\balpha$ is a nontrivial $B$-submodule of $M$, contradicting the simplicity of $M$. This proves the claim, so $M_\balpha\simeq R/\frm$ as $R$-modules for some $\frm\in\Maxspec(R)$. 

By simplicity, $M=BM_\balpha$. Hence, for all $\bbeta \in \ZZ^n$, $M_\bbeta = B_{\bbeta-\balpha}M_\balpha$. 
If $r\in \bsigma^{\bbeta-\balpha}(\frm)$, then $\bsigma^{\balpha-\bbeta}(r) \in \frm$, so
\[rM_\bbeta=rB_{\bbeta-\balpha}M_\balpha=B_{\bbeta-\balpha}\sigma^{\balpha-\bbeta}(r)M_\balpha=0.\]
Thus, $M_\bbeta\subseteq M_{\bsigma^{\bbeta-\balpha}(\frm)}$ and so
\[M=\bigoplus_{\bbeta\in \ZZ^n}M_\bbeta\subseteq \bigoplus_{\bbeta\in\ZZ^n}M_{\bsigma^{\bbeta-\balpha}(\frm)}\subseteq M.\]
It follows that $M=\bigoplus_{\bbeta\in\ZZ^n}M_{\bsigma^{\bbeta-\balpha}(\frm)}$ is a weight module.
\end{proof}

\section{Twisted tensor products of BR algebras}
\label{sec.twtensor}

The goal of this section is to show that the class of BR algebras is closed under taking certain twisted tensor products. Restricting to ordinary tensor products then gives new examples of $\ZZ^n$-graded simple rings.

\begin{definition}[\cite{cap}]\label{def.twtensor}
Let $B$ and $D$ be algebras with multiplication maps $\mu_B$ and $\mu_D$, respectively. A \emph{twisted tensor product} of $B$ and $D$ is an algebra $W$ along with injections $\iota_B:B \to W$ and $\iota_D:D \to W$ such that there is a linear isomorphism $B \tensor D \to W$ given by $b \tensor d \mapsto \iota_B(b) \cdot \iota_D(d)$.
A \emph{twisting map} for the pair $(B,D)$ is a $\kk$-linear map $\tau:D \tensor B \to B \tensor D$ such that $\tau(d \tensor 1)=1 \tensor d$ and $\tau(1 \tensor b) = b \tensor 1$. 
\end{definition}

Given a twisting map for algebras $B$ and $D$, one can define a twisted tensor product as follows. Let $W = B \otimes D$ (as a $\kk$-vector space) and define a map $\mu_W: W \otimes W \to W$ by
\[ 
\mu_W : = (\mu_B \tensor \mu_D) \circ (1_B \tensor \tau \tensor 1_D).
\] 
By \cite[Proposition 2.3]{cap}, $\mu_W$ defines an associative multiplication operation on $C$ if and only if
\[ \tau \circ (\mu_D \tensor \mu_B) 
    = \mu_W \circ (\tau \tensor \tau) \circ (1_D \tensor \tau \tensor 1_B)
    = (\mu_B \tensor \mu_D) \circ (1_B \tensor \tau \tensor 1_D)\circ (\tau \tensor \tau) \circ (1_D \tensor \tau \tensor 1_B).
\]
If this holds, then $W$ is a twisted tensor product of $B$ and $D$ with canonical injections $\iota_B: B \to B \otimes D$ and $\iota_D: D \to B \otimes D$. We write $W = B \otimes_{\tau} D$ for this twisted tensor product.

We will assume the following hypothesis throughout this section.

\begin{hypothesis}\label{hyp.twtensor}
Let $R$ and $S$ be algebras. Let $A = R_\bp[\bu^{\pm 1};\bsigma]$ be a skew Laurent extension over $R$ and let $B = R_{\bp}(\bu,\bsigma,\bH,\bJ) \subseteq A$ a BR algebra of rank $m$ with canonical ideals $I^{(\balpha)}$ for $\balpha \in \ZZ^m$.
Let $C = S_{\bq}[\bv^{\pm 1};\bphi]$ be a skew Laurent extension over $S$ and let $D = S_{\bq}(\bv,\bphi,\bK,\bL) \subset C$ be a BR algebra of rank $n$ with canonical ideals $\cI^{(\bbeta)}$ for $\bbeta \in \ZZ^n$. 
Set $\bsigma = (\sigma_1, \dots, \sigma_m)$ and $\bphi = (\varphi_1, \dots ,\varphi_n)$. For each $i \in [m]$, suppose that $\sigma_i$ lifts to an automorphism $\widetilde{\sigma_i}$ of $R \otimes S$ such that $\widetilde{\sigma_i}(1 \otimes \cI^{(\bbeta)}) \subseteq 1 \otimes \cI^{(\bbeta)}$ for all $\bbeta \in \ZZ^n$.
Similarly, suppose that for each $k \in [n]$, $\varphi_k$ lifts to an automorphism $\widetilde{\varphi_k}$ of $R \otimes S$ such that $\widetilde{\varphi_k}(I^{(\balpha)} \otimes 1) \subseteq I^{(\balpha)} \otimes 1$ for all $\balpha \in \ZZ^m$.
Assume further that all of the $\widetilde{\sigma_i}$ and $\widetilde{\varphi_k}$ commute with one another.
\end{hypothesis}

\begin{remark}\label{rmk.autrestr}
Assume Hypothesis~\ref{hyp.twtensor}. 
\begin{enumerate}
    \item One may always lift the $\sigma_i$ and $\varphi_k$ trivially to commuting automorphisms $\widetilde{\sigma_i} = \sigma_i \otimes \id_S$ and $\widetilde{\varphi_k} = \id_R \otimes \varphi_k$.
    \item By taking $\bbeta = \bzero$, it is clear that the restriction of $\widetilde{\sigma_i}$ to $1 \otimes S$ yields an automorphism of $S$, which we also denote by $\widetilde{\sigma_i}$ by an abuse of notation. Similarly, the restriction of $\widetilde{\varphi_k}$ to $R \otimes 1$ yields an automorphism of $R$.
\end{enumerate}
\end{remark}

For $i \in [m]$ and $k \in [n]$, choose $d_{ik} \in \kk^\times$. For $\bbeta \in \ZZ^m$ and $\balpha \in  \ZZ^n$, define 
\begin{align}\label{eq.dba}
d_{\bbeta,\balpha} = \prod_{\substack{\beta_i \in \bbeta \\ \alpha_k \in \balpha}} {d_{ik}^{\beta_i\alpha_k}}.
\end{align}
We define a $\kk$-linear map $\tau: C \tensor A \to A \tensor C$ as follows. If $b_{\bbeta} u^{\bbeta} \in A_{\bbeta}$ and $a_{\balpha} v^{\balpha} \in C_{\balpha}$, then
\begin{align}\label{eq.SLtau}
\tau( a_\balpha \bv^\balpha \tensor b_\bbeta \bu^\bbeta) = d_{\bbeta,\balpha} \widetilde{\bphi}^\balpha(b_\bbeta) \bu^\bbeta \tensor \widetilde{\bsigma}^{-\bbeta}(a_\balpha) \bv^\balpha,
\end{align}
where $\widetilde{\bphi}^\balpha = \widetilde\varphi_1^{\alpha_1} \cdots \widetilde\varphi_n^{\alpha_n}$ and $\widetilde{\bsigma}^{-\bbeta} = \widetilde\sigma_1^{-\beta_1} \widetilde\cdots \sigma_m^{-\beta_m}$. 
The map $\tau$ restricts to a well-defined map $D \tensor B \to B \tensor D$ by our assumption on the automorphisms $\widetilde{\sigma_i}$ and $\widetilde{\varphi_k}$ preserving the canonical ideals.

\begin{lemma}
The map $\tau$ defined in \eqref{eq.SLtau} satisfies the associativity conditions.
\end{lemma}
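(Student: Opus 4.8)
The plan is to verify the associativity condition from \cite[Proposition 2.3]{cap}, namely
\[
\tau \circ (\mu_D \tensor \mu_B) = (\mu_B \tensor \mu_D) \circ (1_B \tensor \tau \tensor 1_D)\circ (\tau \tensor \tau) \circ (1_D \tensor \tau \tensor 1_B),
\]
by checking it on a spanning set. Since both $A = R_\bp[\bu^{\pm1};\bsigma]$ and $C = S_\bq[\bv^{\pm1};\bphi]$ are spanned by homogeneous elements of the form $b_\bbeta \bu^\bbeta$ and $a_\balpha \bv^\balpha$ respectively, and since the formula \eqref{eq.SLtau} for $\tau$ is $\kk$-linear, it suffices to evaluate both sides of the associativity identity on a generic tensor $a_\balpha \bv^\balpha \tensor a_{\balpha'}\bv^{\balpha'} \tensor b_\bbeta \bu^\bbeta$ (for the condition coming from $\mu_D$ on the left) and on $a_\balpha\bv^\balpha \tensor b_\bbeta \bu^\bbeta \tensor b_{\bbeta'}\bu^{\bbeta'}$ (for the condition coming from $\mu_B$). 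Actually, since the associativity condition as stated is a single identity, I will simply chase the generic element $a_\balpha \bv^\balpha \tensor b_\bbeta \bu^\bbeta \tensor b_{\bbeta'} \bu^{\bbeta'}$ through both composites and also the pentagon-type element needed; concretely one checks the two "halves'' of the hexagon condition separately, which is the standard approach.

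First I would record the elementary bookkeeping facts that make the computation go through. The key inputs are: (i) the cocycle-type identity $d_{\bbeta + \bbeta', \balpha} = d_{\bbeta,\balpha} d_{\bbeta',\balpha}$ and $d_{\bbeta, \balpha+\balpha'} = d_{\bbeta,\balpha}d_{\bbeta,\balpha'}$, which is immediate from the product formula \eqref{eq.dba}; (ii) the fact that the $\widetilde{\sigma_i}$ and $\widetilde{\varphi_k}$ all commute with one another (Hypothesis~\ref{hyp.twtensor}), so that $\widetilde{\bphi}^\balpha$ and $\widetilde{\bsigma}^{-\bbeta}$ may be freely interchanged and composed; (iii) the multiplication rule in the skew Laurent extensions, which says that $b_\bbeta \bu^\bbeta \cdot b_{\bbeta'} \bu^{\bbeta'}$ is a scalar (a product of the $p_{ik}$) times $b_\bbeta \widetilde{\bsigma}^{\bbeta}(b_{\bbeta'}) \bu^{\bbeta + \bbeta'}$ — here one must be careful that the scalar from the $\bp$-commutation and the scalar $d_{\bbeta,\balpha}$ are independent and multiply cleanly; and (iv) $\widetilde{\bphi}^\balpha$ is an algebra homomorphism, so it distributes over the product $b_\bbeta \widetilde{\bsigma}^\bbeta(b_{\bbeta'})$. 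With these in hand, evaluating the right-hand composite on $a_\balpha\bv^\balpha \tensor b_\bbeta\bu^\bbeta \tensor b_{\bbeta'}\bu^{\bbeta'}$ produces, after applying $\tau \tensor \tau$, then $1_B \tensor \tau \tensor 1_D$, then $\mu_B \tensor \mu_D$, a term whose scalar is $d_{\bbeta,\balpha}d_{\bbeta',\balpha}$ and whose automorphism applied to $a_\balpha$ is $\widetilde{\bsigma}^{-\bbeta-\bbeta'}$; the left-hand side $\tau \circ (\mu_B \tensor \mu_D)$ (applied in the appropriate slot — here I mean $\tau \circ (1 \tensor \mu_B)$ in the relevant reading) gives the scalar $d_{\bbeta+\bbeta',\balpha}$ and automorphism $\widetilde{\bsigma}^{-(\bbeta+\bbeta')}$, and these agree by (i). The symmetric computation with two $C$-factors and one $A$-factor uses $d_{\bbeta,\balpha+\balpha'} = d_{\bbeta,\balpha}d_{\bbeta,\balpha'}$ and the homomorphism property of $\widetilde{\bphi}^\balpha$ together with commutation of the $\widetilde{\bphi}$'s among themselves.

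The main obstacle I anticipate is purely organizational rather than conceptual: keeping track of which automorphism is applied to which element, and in particular making sure the twist $\widetilde{\bsigma}^{-\bbeta}$ applied by $\tau$ interacts correctly with the further twist $\widetilde{\bsigma}^{\bbeta}$ coming from the skew-Laurent multiplication $\mu_A$, so that one does not accidentally drop or double an exponent. A clean way to manage this is to note that $\tau$ is exactly the "swap and twist'' map, and that the composite on the right of the associativity identity is, by design, the map that takes $a_\balpha\bv^\balpha \tensor (b_\bbeta\bu^\bbeta \cdot b_{\bbeta'}\bu^{\bbeta'})$ and moves the combined $A$-element past $a_\balpha\bv^\balpha$ all at once; since $\widetilde{\bsigma}^{-\bbeta}\widetilde{\bsigma}^{-\bbeta'} = \widetilde{\bsigma}^{-(\bbeta+\bbeta')}$ and the $d$'s multiply, this matches moving it past in one step. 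I would present the proof as: (a) reduce to homogeneous spanning elements by linearity; (b) state the three bookkeeping lemmas (i)–(iv) above, each a one-line verification; (c) compute both composites on the generic element and observe equality coordinate-by-coordinate in the scalar and in the automorphism. The restriction to $D \tensor B \to B \tensor D$ is then automatic, as already noted after \eqref{eq.SLtau}, from the hypothesis that the lifted automorphisms preserve the canonical ideals.
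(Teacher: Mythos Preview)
Your proposal is correct and rests on exactly the same ingredients as the paper's proof: the bilinearity identity $d_{\bbeta+\bbeta',\balpha}=d_{\bbeta,\balpha}d_{\bbeta',\balpha}$ (and its companion in the second slot), the commutation of all the $\widetilde{\sigma_i}$ and $\widetilde{\varphi_k}$, and the fact that each $\widetilde{\bphi}^\balpha$, $\widetilde{\bsigma}^{-\bbeta}$ is an algebra automorphism. The only difference is organizational: the paper verifies the single four-tensor identity directly on a homogeneous element $a_\balpha\bv^\balpha \tensor b_\bbeta\bv^\bbeta \tensor c_\bgamma\bu^\bgamma \tensor d_\bdelta\bu^\bdelta$ in one chain of equalities, whereas you split into the two equivalent three-tensor hexagon conditions; your version is slightly less bookkeeping per line, the paper's is one computation instead of two, and both unwind to the same scalar and automorphism checks.
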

\begin{proof}
Let $a_\balpha,b_\bbeta,c_\bgamma,d_\bdelta$ belong to the correct rings. Then
\begin{align*}
(\mu_B &\tensor \mu_{\cB})(1 \tensor \tau \tensor 1)(\tau \tensor \tau)(1 \tensor \tau \tensor 1)(a_\balpha \bv^\balpha \tensor b_\bbeta \bv^\bbeta \tensor c_\bgamma \bu^\bgamma \tensor d_\bdelta \bu^\bdelta) \\
	&= d_{\bbeta,\bgamma}(\mu_A \tensor \mu_B)(1 \tensor \tau \tensor 1)(\tau \tensor \tau)(a_\balpha \bv^\balpha \tensor \widetilde{\bphi}^\bbeta(c_\bgamma) \bu^\bgamma \tensor \widetilde{\bsigma}^{-\bgamma}(b_\bbeta) \bv^\bbeta  \tensor d_\bdelta \bu^\bdelta) \\
	&= d_{\bbeta,\bgamma}d_{\balpha,\bgamma}d_{\bbeta,\bdelta}(\mu_A \tensor \mu_B)(1 \tensor \tau \tensor 1)(\widetilde{\bphi}^{\balpha+\bbeta}(c_\bgamma) \bu^\bgamma \tensor \widetilde{\bsigma}^{-\bgamma}(a_\balpha) \bv^\balpha \tensor \widetilde{\bphi}^{\bbeta}(d_\bdelta) \bu^\bdelta   
    \tensor  \widetilde{\bsigma}^{-(\bgamma+\bdelta)}(b_\bbeta) \bv^\bbeta) \\
	&= d_{\balpha+\bbeta,\bgamma+\bdelta}(\mu_B \tensor \mu_{\cB})( 
    \widetilde{\bphi}^{\balpha+\bbeta}(c_\bgamma) \bu^\bgamma \tensor \widetilde{\bphi}^{\balpha+\bbeta}(d_\bdelta) \bu^\bdelta 
    \tensor \widetilde{\bsigma}^{-(\bgamma+\bdelta)}(a_\balpha) \bv^\balpha \tensor   
    \widetilde{\bsigma}^{-(\bgamma+\bdelta)}(b_\bbeta) \bv^\bbeta) \\
	&= d_{\balpha+\bbeta,\bgamma+\bdelta} \left( \widetilde{\bphi}^{\balpha+\bbeta}(c_\bgamma\widetilde{\bsigma}^\bgamma(d_\bdelta)) \bu^{\bgamma+\bdelta} \tensor 
	\widetilde{\bsigma}^{-(\bgamma+\bdelta)}(a_\balpha\widetilde{\bphi}^\balpha(b_\bbeta)) \bv^{\balpha+\bbeta} \right) \\
    &= \tau\left( a_\balpha \widetilde{\bphi}^\balpha(b_\bbeta) \bv^{\balpha+\bbeta} \tensor c_\bgamma\widetilde{\bsigma}^\bgamma(d_\bdelta) \bu^{\bgamma+\bdelta} \right) \\
    &= \tau \circ (\mu_{\cB} \tensor \mu_B)(a_\balpha \bv^\balpha \tensor b_\bbeta \bv^\bbeta \tensor c_\bgamma \bu^\bgamma \tensor d_\bdelta \bu^\bdelta).\qedhere
\end{align*}
\end{proof}

We now construct a BR algebra $W$ and show that $B \tensor_\tau D \iso W$.
Let $\bd=(d_{ij})$ and set
\[ \br = \begin{pmatrix}\bp & \bd \\ \bd^T & \bq\end{pmatrix}.\]
Set $\bpi=(\pi_1,\hdots,\pi_{m+n})$ where $\pi_i$ is the extension of $\sigma_i$ to $R \tensor S$ for $i\in [m]$ and $\pi_{m+j}$ is the extension of $\varphi_j$ for $j \in [n]$. 
The $\pi_i$ then commute by hypothesis. Let $(R \tensor S)_\br[\bt^{\pm 1},\bpi]$ be the rank $m+n$ skew Laurent extension corresponding to this data.

\begin{lemma}\label{lem.twtensor}
Keep the above notation and let $\tau$ be as in \eqref{eq.SLtau}. Then $A\tensor_\tau C \iso (R \tensor S)_\br[\bt^{\pm 1},\bpi]$.
\end{lemma}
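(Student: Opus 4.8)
The plan is to exhibit an explicit graded $\kk$-algebra isomorphism $\Psi \colon (R \tensor S)_\br[\bt^{\pm 1},\bpi] \to A \tensor_\tau C$ and verify it is well defined and bijective. First I would define $\Psi$ on generators by $\Psi(r \tensor s) = \iota_A(r) \iota_C(s) = r \tensor s$ for $r \in R$, $s \in S$, by $\Psi(t_i) = \iota_A(u_i) = u_i \tensor 1$ for $i \in [m]$, and by $\Psi(t_{m+j}) = \iota_C(v_j) = 1 \tensor v_j$ for $j \in [n]$. More precisely, on a homogeneous component I would set $\Psi\big((b_\bbeta \tensor c_\balpha)\, \bt^{(\bbeta,\balpha)}\big) = (b_\bbeta \bu^\bbeta) \tensor (c_\balpha \bv^\balpha)$, where I am using Hypothesis~\ref{hyp.twtensor} together with the linear isomorphism $A \tensor C \iso A \tensor_\tau C$, $x \tensor y \mapsto \iota_A(x)\iota_C(y)$, from Definition~\ref{def.twtensor}; since each degree component of $(R\tensor S)_\br[\bt^{\pm 1},\bpi]$ is $(R\tensor S)\bt^{(\bbeta,\balpha)} \iso (R\bu^\bbeta) \tensor (S\bv^\balpha) = A_\bbeta \tensor C_\balpha$, this is a $\ZZ^{m+n}$-graded $\kk$-linear bijection essentially by construction.

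The heart of the proof is checking that $\Psi$ is a ring homomorphism, i.e.\ that it respects the defining skew-Laurent relations of $(R\tensor S)_\br[\bt^{\pm 1},\bpi]$. There are three families of relations to check. The relations $t_i r = \pi_i(r) t_i$ for $r \in R \tensor S$ split into the $R$-part, $S$-part, and cross terms; by the definition of $\bpi$ (it is $\widetilde{\sigma_i}$ or $\widetilde{\varphi_j}$ on $R \tensor S$) and the fact that $\iota_A$, $\iota_C$ are algebra maps, these translate directly into the corresponding skew relations inside $A$ and inside $C$, together with the commutation $\iota_A(x)\iota_C(y) = \mu_W(x \tensor 1, 1 \tensor y)$ built into the twisted tensor product; the key point here is that $\tau(1 \tensor b) = b \tensor 1$ and $\tau(d \tensor 1) = 1 \tensor d$ make $\iota_A$ and $\iota_C$ algebra embeddings. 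The relations $t_k t_i = p_{ik} t_i t_k$ for $i,k \in [m]$ and $t_{m+\ell}t_{m+j} = q_{j\ell}t_{m+j}t_{m+\ell}$ for $j,\ell \in [n]$ are inherited from $A$ and $C$ respectively. The genuinely new relations are the cross ones $t_{m+j}\, t_i = r_{i,m+j}\, t_i\, t_{m+j} = d_{ij} t_i t_{m+j}$; under $\Psi$ the left side is $\iota_C(v_j)\iota_A(u_i)$, and using the formula for $\mu_W$ and \eqref{eq.SLtau} with $\balpha = \be_j$, $\bbeta = \be_i$, $a_\balpha = 1$, $b_\bbeta = 1$ gives $\tau(v_j \tensor u_i) = d_{ij}\, u_i \tensor v_j$ (since $\widetilde{\varphi_j}(1) = 1$ and $\widetilde{\sigma_i}^{-1}(1) = 1$), so $\iota_C(v_j)\iota_A(u_i) = d_{ij}\, \iota_A(u_i)\iota_C(v_j) = d_{ij}\, \Psi(t_i t_{m+j})$, exactly matching $r_{i,m+j} = d_{ij}$. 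Thus $\Psi$ sends relations to relations, hence extends to an algebra homomorphism out of the skew-Laurent ring, which is automatically graded.

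Finally, to conclude $\Psi$ is an isomorphism I would observe that it is already a bijection on the level of the underlying graded vector space, as noted above: on each degree $(\bbeta,\balpha)$ it restricts to the canonical identification $(R\tensor S)\bt^{(\bbeta,\balpha)} \xrightarrow{\sim} A_\bbeta \tensor C_\balpha$, and summing over all degrees gives a linear bijection $(R\tensor S)_\br[\bt^{\pm 1},\bpi] \to A \tensor_\tau C$. Since it is simultaneously an algebra homomorphism and a linear bijection, it is an algebra isomorphism. I expect the main obstacle — really the only place requiring care — to be the cross relations $t_{m+j}t_i = d_{ij} t_i t_{m+j}$: one must unwind the definition of $\mu_W = (\mu_A \tensor \mu_C)\circ(1 \tensor \tau \tensor 1)$ and plug the right homogeneous pieces into \eqref{eq.SLtau}, being attentive to which of $\widetilde{\bphi}^\balpha$ and $\widetilde{\bsigma}^{-\bbeta}$ acts and to the scalar $d_{\bbeta,\balpha}$, and confirm it collapses to the single scalar $d_{ij}$ for the generator case while producing the twisting $\widetilde{\bphi}^\balpha(b_\bbeta)$ and $\widetilde{\bsigma}^{-\bbeta}(a_\balpha)$ that is absorbed into the homogeneous-component description of $\Psi$. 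Everything else is bookkeeping that mirrors the structure already used in the lemma verifying the associativity conditions for $\tau$.
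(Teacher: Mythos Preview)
Your approach is essentially the same as the paper's: define a map on generators by $t_i\mapsto u_i\tensor 1$ for $i\le m$, $t_{m+j}\mapsto 1\tensor v_j$, identity on $R\tensor S$, check the skew-Laurent relations (with the cross relation reducing to $\tau(v_j\tensor u_i)=d_{ij}\,u_i\tensor v_j$ being the only nontrivial one), and infer bijectivity from the grading; the paper does exactly this, though it is terser about bijectivity and simply writes ``The result follows'' after the relation check.

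One small caution: the explicit formula you give on homogeneous components, $\Psi\big((b_\bbeta\tensor c_\balpha)\bt^{(\bbeta,\balpha)}\big)=(b_\bbeta\bu^\bbeta)\tensor(c_\balpha\bv^\balpha)$, does not agree with the map you defined on generators, because the product $(1\tensor c_\balpha)(\bu^\bbeta\tensor 1)$ in $A\tensor_\tau C$ picks up a twist $\widetilde{\bsigma}^{-\bbeta}(c_\balpha)$ from \eqref{eq.SLtau}. This does not damage your argument---the corrected formula is still a linear bijection on each graded piece---but you should either fix the formula or drop it and argue bijectivity directly from the fact that both sides have degree-$(\bbeta,\balpha)$ component isomorphic to $R\tensor S$.
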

\begin{proof}
Define a map $\phi:(R \tensor S)_\br[\bt^{\pm 1},\bpi] \to A\tensor_\tau  C$ that is the identity on $R \tensor S$ and
\[
\phi(t_i) = \begin{cases}
    u_i \tensor 1 & \text{if $i\leq m$} \\
    1 \tensor  v_{i-m} & \text{if $m < i \leq m+n$.}
\end{cases}
\]
If suffices to check $\phi$ on the defining relations of $(R \tensor S)_\br[\bt^{\pm 1},\bpi]$. For $i \leq m$ we have
\begin{align*}
    \phi(t_i)\phi(r \tensor r') - \phi(\pi_i(r \tensor r'))\phi(t_i).
\end{align*}
A similar result holds for $i > m$.
If $i,k \leq m$, then it is clear that $\phi(t_k)\phi(t_i) = r_{ik}\phi(t_i)\phi(t_k)$,
as is the case when $i,k>m$. Finally, for $i \leq m$ and $k > m$, we have
\[ \phi(t_k)\phi(t_i) 
    = (1 \tensor v_{k-m}) \tensor (u_i \tensor 1)
    = d_{\be_i,\be_{k-m}} t_i \tensor t_{k-m}
    = r_{ik} (u_i \tensor 1) \tensor (1 \tensor v_{k-m})
    = r_{ik} \phi(t_i)\phi(t_k).
\]
The result follows.
\end{proof}

Now define tuples of ideals $\bM$ and $\bN$ of $R \tensor S$ as follows: 
\begin{align*}
\bM &= (M_1,\ldots, M_{m+n}) = (H_1 \tensor S,\hdots,H_m\tensor S,R \tensor K_1,\hdots,R \tensor K_n) \\
\bN &= (N_1,\ldots, N_{m+n}) =  (J_1 \tensor S,\hdots,J_m\tensor S,R \tensor L_1,\hdots,R \tensor L_n).
\end{align*}
By hypothesis, the $\pi_i$ commute. Moreover, $\pi_i(M_k)=M_k$ and $\pi_i(N_k)=N_k$ for $i \neq k$. Define the BR algebra $W = (R \tensor S)_\br(\bs,\bpi,\bM,\bN)$, which is a subalgebra of $(R \tensor S)_\br[\bs^{\pm 1},\bpi]$.

\begin{theorem}\label{thm.twtensor}
Assume Hypothesis \ref{hyp.twtensor} and let $\tau$ be as in \eqref{eq.SLtau}. Then $B \tensor_\tau D \iso W$.
\end{theorem}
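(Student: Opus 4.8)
The plan is to realize both $W$ and $B\tensor_\tau D$ as graded subalgebras of the same ambient skew Laurent ring and match them one homogeneous component at a time, under the grading by $\ZZ^{m+n}\iso\ZZ^m\times\ZZ^n$, where (as in~\eqref{eq.SLtau}) $\bbeta\in\ZZ^m$ records the $B$-degree and $\balpha\in\ZZ^n$ the $D$-degree. On one side, since $\tau$ restricts to a map $D\tensor B\to B\tensor D$, the multiplication of $B\tensor_\tau D$ is the restriction of that of $A\tensor_\tau C$, so $B\tensor_\tau D$ is a $(\ZZ^m\times\ZZ^n)$-graded subalgebra of $A\tensor_\tau C$ with $(B\tensor_\tau D)_{(\bbeta,\balpha)}=\iota_A(B_\bbeta)\,\iota_C(D_\balpha)=B_\bbeta\tensor D_\balpha$, the last identification using only the twisting-map axioms $\tau(1\tensor b)=b\tensor1$ and $\tau(d\tensor1)=1\tensor d$. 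On the other side, $W$ is a $\ZZ^{m+n}$-graded subalgebra of $(R\tensor S)_\br[\bt^{\pm1},\bpi]$ (identifying the variables of $W$ with the $t_i$ of Lemma~\ref{lem.twtensor}), and its degree-$(\bbeta,\balpha)$ component has the form $\mathfrak{I}_{(\bbeta,\balpha)}\,\bt^{(\bbeta,\balpha)}$, where $\mathfrak{I}_{(\bbeta,\balpha)}$ is the canonical ideal of $W$ built from $\bM$ and $\bN$. Lemma~\ref{lem.twtensor} provides an isomorphism $\phi$ between the two ambient rings, so it remains to check that $\phi$ carries $\mathfrak{I}_{(\bbeta,\balpha)}\,\bt^{(\bbeta,\balpha)}$ onto $B_\bbeta\tensor D_\balpha$ for every $(\bbeta,\balpha)$.

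The first computation is that $\mathfrak{I}_{(\bbeta,\balpha)}=I^{(\bbeta)}\tensor\cI^{(\balpha)}$. For $i\le m$ the automorphism $\pi_i$ restricts to $\sigma_i$ on $R\tensor1$ and to an automorphism of $1\tensor S$ (Remark~\ref{rmk.autrestr}), hence is diagonal for the tensor decomposition and satisfies $\pi_i^j(J_i\tensor S)=\sigma_i^j(J_i)\tensor S$ for all $j\in\ZZ$. Since ideals supported on the first factor multiply by $(\mathfrak a\tensor S)(\mathfrak b\tensor S)=\mathfrak a\mathfrak b\tensor S$ for ideals $\mathfrak a,\mathfrak b$ of $R$, this gives $I_{\pi_i}^{(\beta_i)}(H_i\tensor S,J_i\tensor S)=I_i^{(\beta_i)}\tensor S$, and symmetrically $I_{\pi_{m+k}}^{(\alpha_k)}(R\tensor K_k,R\tensor L_k)=R\tensor I_{\varphi_k}^{(\alpha_k)}(K_k,L_k)$. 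Taking the product over $i\in[m]$ and $k\in[n]$, and using that $I^{(\bbeta)}$ is an ideal of $R$ and $\cI^{(\balpha)}$ an ideal of $S$, one obtains $\mathfrak{I}_{(\bbeta,\balpha)}=(I^{(\bbeta)}\tensor S)(R\tensor\cI^{(\balpha)})=I^{(\bbeta)}\tensor\cI^{(\balpha)}$.

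Next I would push $W$ through the isomorphism $\phi\colon(R\tensor S)_\br[\bt^{\pm1},\bpi]\to A\tensor_\tau C$ of Lemma~\ref{lem.twtensor}. Since $\phi$ is the identity on $R\tensor S$ and $\phi(\bt^{(\bbeta,\balpha)})=\bu^\bbeta\tensor\bv^\balpha$, unwinding the product formula in $A\tensor_\tau C$ and the definition~\eqref{eq.SLtau} of $\tau$ gives $\phi\bigl((r\tensor s)\,\bt^{(\bbeta,\balpha)}\bigr)=(r\bu^\bbeta)\tensor\bigl(\widetilde{\bsigma}^{-\bbeta}(s)\,\bv^\balpha\bigr)$ for $r\in R$, $s\in S$. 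Applying this to the component computed above,
\[
\phi\bigl(W_{(\bbeta,\balpha)}\bigr)=\phi\bigl((I^{(\bbeta)}\tensor\cI^{(\balpha)})\,\bt^{(\bbeta,\balpha)}\bigr)=(I^{(\bbeta)}\bu^\bbeta)\tensor\bigl(\widetilde{\bsigma}^{-\bbeta}(\cI^{(\balpha)})\,\bv^\balpha\bigr).
\]
By Hypothesis~\ref{hyp.twtensor}, each $\widetilde{\sigma_i}$ stabilizes $1\tensor\cI^{(\balpha)}$, hence so does $\widetilde{\bsigma}^{-\bbeta}$, i.e.\ $\widetilde{\bsigma}^{-\bbeta}(\cI^{(\balpha)})=\cI^{(\balpha)}$; thus the right-hand side equals $(I^{(\bbeta)}\bu^\bbeta)\tensor(\cI^{(\balpha)}\bv^\balpha)=B_\bbeta\tensor D_\balpha=(B\tensor_\tau D)_{(\bbeta,\balpha)}$. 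Summing over all $(\bbeta,\balpha)$ shows $\phi(W)=B\tensor_\tau D$, so the restriction of $\phi$ is an isomorphism of $\ZZ^{m+n}$-graded algebras $W\to B\tensor_\tau D$, which proves $B\tensor_\tau D\iso W$.

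I expect the main obstacle to be the grading bookkeeping in the penultimate step: keeping track of the twist by $\widetilde{\bsigma}^{-\bbeta}$ that $\phi$ introduces on the $S$-factor and verifying that it is precisely absorbed by the stability clause of Hypothesis~\ref{hyp.twtensor} (and, relatedly, checking in the first step that the $\pi_i$ interact with the split ideals $H_i\tensor S$ and $R\tensor K_k$ as expected, which is where the diagonal form of the lifted automorphisms is used). Once the bookkeeping is in place, the remaining verifications are routine.
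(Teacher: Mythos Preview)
Your proof is correct and follows the same overall strategy as the paper: both restrict the isomorphism $\phi$ of Lemma~\ref{lem.twtensor} to $W$ and identify its image with $B\tensor_\tau D$ inside $A\tensor_\tau C$. The difference is only tactical. The paper verifies that $\Phi=\phi|_W$ lands in $B\tensor_\tau D$, inherits injectivity from $\phi$, and then checks surjectivity on the algebra generators $j_iu_i\tensor1$, $\sigma_i^{-1}(h_i)u_i^{-1}\tensor1$, $1\tensor l_kv_k$, $1\tensor\varphi_k^{-1}(k_k)v_k^{-1}$ by invoking Lemma~\ref{lem.BRgen} and the identity $b_\bbeta\tensor d_\bdelta=(b_\bbeta\tensor1)(1\tensor d_\bdelta)$. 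You instead compute the canonical ideal $\mathfrak I_{(\bbeta,\balpha)}=I^{(\bbeta)}\tensor\cI^{(\balpha)}$ directly and match $\phi(W_{(\bbeta,\balpha)})$ with $(B\tensor_\tau D)_{(\bbeta,\balpha)}$ componentwise. Your route is more explicit about the graded structure and handles bijectivity in one stroke, and it makes the role of the stability clause in Hypothesis~\ref{hyp.twtensor} (absorbing the twist $\widetilde{\bsigma}^{-\bbeta}$) more visible; the paper's route avoids computing $\mathfrak I_{(\bbeta,\balpha)}$ but trades that for an appeal to Lemma~\ref{lem.BRgen}.
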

\begin{proof}
It is clear that $B \tensor_\tau D$ is a subalgebra of $A \tensor_\tau C$. Let $\phi$ be the map given in Lemma \ref{lem.twtensor} and consider the restriction $\Phi=\phi\restrict{W}$. By Remark \ref{rmk.autrestr}(2), $\Phi$ is a well-defined map $W \to B \tensor_\tau D$. That $\Phi$ is injective follows from the injectivity of $\phi$. 

Let $j_i \in J_i$ so that $j_iu_i$ is a generator for $B$, $1 \leq i \leq m$. Then $j_i \tensor 1 \in N_i$ so $\Phi((j_i \tensor 1)t_i) = j_iu_i \tensor 1$. Similarly we obtain $\sigma_i\inv(h_i)u_i\inv \tensor 1 \in \im\Phi$ for each $i$ and all $h_i \in H_i$, as well as $1 \tensor l_iv_i, 1 \tensor \varphi_i\inv(k_i)v_i\inv \in \im\Phi$ for all $i$, $1 \leq i \leq n$ and all $l_i \in L_i$, $k_i \in K_i$. It follows from Lemma \ref{lem.BRgen} that the subalgebras $B \tensor 1$ and $1 \tensor D$ are in the image of $\Phi$.

Now let $\sum b_\bbeta \tensor d_\bdelta \in B \tensor_\tau D$ where $\bbeta$ ranges over $\ZZ^m$ and $\bdelta$ ranges over $\ZZ^n$. For each $\bbeta$, $b_\bbeta \in B_\bbeta$ and so $b_\bbeta \tensor 1 \in B \tensor_\tau D$. Similarly, $1 \tensor d_\bdelta \in B \tensor_\tau D$. 
It follows from \eqref{eq.SLtau} that $b_\bbeta \tensor d_\bdelta = (b_\bbeta \tensor 1)(1 \tensor d_\bdelta) \in B \tensor_\tau D$. Thus, $\Phi$ is surjective.
\end{proof}

\begin{corollary}\label{cor.tensor}
Assume Hypothesis \ref{hyp.twtensor}. Then the (untwisted) tensor product $B \tensor D$ is a BR algebra of rank $m+n$.
\end{corollary}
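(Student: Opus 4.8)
The plan is to exhibit the ordinary tensor product $B\tensor D$ as the particular twisted tensor product arising from the flip map, and then simply quote Theorem \ref{thm.twtensor}. First I would specialize the twisting data introduced in this section: take $d_{ik}=1$ for all $i\in[m]$ and $k\in[n]$, and use the trivial lifts $\widetilde{\sigma_i}=\sigma_i\tensor\id_S$ and $\widetilde{\varphi_k}=\id_R\tensor\varphi_k$ furnished by Remark \ref{rmk.autrestr}(1). These commute with one another and preserve $1\tensor\cI^{(\bbeta)}$ and $I^{(\balpha)}\tensor 1$ respectively, so Hypothesis \ref{hyp.twtensor} is satisfied for this choice of data.

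Next I would check that for this data the twisting map $\tau$ of \eqref{eq.SLtau} is nothing but the flip. By \eqref{eq.dba} every $d_{\bbeta,\balpha}$ equals $1$, and by Remark \ref{rmk.autrestr}(2) the restriction of $\widetilde{\bphi}^{\balpha}$ to $R\tensor 1$ and of $\widetilde{\bsigma}^{-\bbeta}$ to $1\tensor S$ is the identity; hence \eqref{eq.SLtau} reads $\tau(a_\balpha\bv^\balpha\tensor b_\bbeta\bu^\bbeta)=b_\bbeta\bu^\bbeta\tensor a_\balpha\bv^\balpha$ on homogeneous elements, and its restriction to $D\tensor B$ is again the flip. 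Consequently $B\tensor_\tau D$ coincides, as an algebra, with the untwisted tensor product $B\tensor D$.

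Finally I would invoke Theorem \ref{thm.twtensor}, which yields a graded isomorphism $B\tensor_\tau D\iso W$ with $W=(R\tensor S)_\br(\bs,\bpi,\bM,\bN)$ a BR algebra of rank $m+n$; here $\br$ has $\bp$ and $\bq$ in its diagonal blocks and all-ones matrices in its off-diagonal blocks, which is multiplicatively antisymmetric since each cross entry equals its own inverse. Composing the two identifications gives $B\tensor D\iso W$, as claimed. I do not anticipate any genuine obstacle: the substantive work is all contained in Theorem \ref{thm.twtensor}, and the only points requiring (easy) care are verifying that the trivial twisting data collapses $\tau$ to the flip and that the resulting matrix $\br$ remains multiplicatively antisymmetric.
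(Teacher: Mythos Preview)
Your proposal is correct and is precisely the intended deduction: the paper states the corollary with no proof because it is meant to follow immediately from Theorem~\ref{thm.twtensor} upon specializing the twisting data so that $\tau$ becomes the flip. Your choice of $d_{ik}=1$ and of the trivial lifts from Remark~\ref{rmk.autrestr}(1), together with the observation that the resulting $\br$ is multiplicatively antisymmetric, is exactly what is required.
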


We now construct new examples of $\ZZ^n$-graded simple rings, for all $n \geq 1$. 

\begin{proposition}
[{\cite[Proposition 2.18 (3)]{BR}}, {\cite[Proposition 2.3]{GRW1}}]
\label{prop.simple}
Let $B=R(t,\sigma,H,J)$ be a BR algebra of rank one and set $\css(B)=\css_1(B)$. Then $B$ is simple if and only if 
\begin{itemize}
    \item $R$ has no proper, nontrivial $\sigma$-invariant ideals ($R$ is $\sigma$-simple), and
    \item $\sigma^i(\css(B))\cap\css(B) = \emptyset$ for all $i \neq 0$ ($B$ is $\sigma$-lonely).
\end{itemize}
\end{proposition}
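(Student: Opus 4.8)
The plan is to prove the two implications separately — necessity by contraposition, sufficiency directly; for concreteness I take $R$ to be a commutative domain, the case of interest (the general statement needs only routine changes, cf.\ \cite{BR,GRW1}). Write $B=\bigoplus_{k\in\ZZ}I^{(k)}t^k\subseteq R[t^{\pm 1};\sigma]$, so $B_0=R$, $B_1=Jt$, $B_{-1}=\sigma\inv(H)t\inv$. Inside $R[t^{\pm 1};\sigma]$ one has $B_1B_{-1}=HJ$, $B_{-1}B_1=\sigma\inv(HJ)$, and, for $k>0$, $B_kB_{-k}=\prod_{j=0}^{k-1}\sigma^j(HJ)$ and $B_{-k}B_k=\prod_{j=1}^{k}\sigma^{-j}(HJ)$; this is why the datum governing simplicity is $\css(B)=\{\,\frp\in\Spec R:\frp\supseteq HJ\,\}$ together with its $\ZZ$-orbit under $\frp\mapsto\sigma(\frp)$.

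\emph{Necessity.} If $R$ has a proper nonzero $\sigma$-invariant ideal $K$, then $tK=Kt$ and $t\inv K=Kt\inv$ in $R[t^{\pm 1};\sigma]$, so (since $K$ is $\sigma$-stable and two-sided in $R$) $(BKB)_k\subseteq Kt^k$ for all $k$; in particular $1\notin(BKB)_0\subseteq K$, and $BKB$ is a proper nonzero ideal, so $B$ is not simple. We may thus assume $R$ is $\sigma$-simple. If $B$ is not $\sigma$-lonely, there are $i\neq 0$ and a prime $\frp$ with $HJ+\sigma^i(HJ)\subseteq\frp$, so $\frp$ and $\sigma^{-i}(\frp)$ both lie in $\css(B)$ and the $\sigma$-orbit of $\frp$ carries two distinct ``breaks.'' When this orbit is torsion-free, Theorem~\ref{thm.weight} provides a simple weight module $M$ whose support is the finite nonempty set of maximal ideals strictly between two consecutive breaks; for $N$ at least the width of $\supp(M)$ the nonzero space $B_{\pm N}=I^{(\pm N)}t^{\pm N}$ annihilates $M$, so $\Ann_B(M)$ is a proper (since $M\neq0$) nonzero two-sided ideal and $B$ is not simple. (When the orbit is torsion one uses an analogous finite-support simple module, $\sigma$-simplicity ruling out the only genuinely degenerate case, a $\sigma$-fixed maximal ideal.)

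\emph{Sufficiency.} Assume $R$ is $\sigma$-simple and $B$ is $\sigma$-lonely, and let $\cI$ be a nonzero two-sided ideal; I claim $\cI=B$. A sliding argument first gives $\cI\cap R\neq 0$: if $\cI$ contains a nonzero homogeneous element then either some such element lies in degree $0$, or, for one of the form $a_pt^p$ with $p\neq 0$, $0\neq B_{-p}(a_pt^p)=I^{(-p)}\sigma^{-p}(a_p)\subseteq\cI\cap R$ (using $I^{(-p)}\neq 0$ and $R$ a domain); and if $\cI$ had no nonzero homogeneous element, take $0\neq x=\sum_{k=p}^q a_kt^k\in\cI$ of minimal width, so $rx-x\sigma^{-q}(r)$ is shorter (its degree-$q$ term cancels), hence zero for all $r$, whence (via its degree-$p$ coefficient $a_p(r-\sigma^{p-q}(r))$ and $\sigma$-simplicity) $\sigma^{q-p}=\id$; then $\sigma$ has finite order, $\sigma$-loneliness forces $HJ=R$, $B$ is strongly graded, and $\cI=B(\cI\cap R)B=0$ — impossible. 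Put $K_0=\cI\cap R\neq 0$; the degree-zero parts of $B_1K_0B_{-1}$ and $B_{-1}K_0B_1$ give $HJ\cdot\sigma(K_0)\subseteq K_0$ and $\sigma\inv(HJ)\cdot\sigma\inv(K_0)\subseteq K_0$. Suppose $K_0\neq R$ and fix a proper prime $\frp\supseteq K_0$. Since $\frp$ is prime, the first relation gives $HJ\subseteq\frp$ or $K_0\subseteq\sigma\inv(\frp)$; repeating on $\sigma\inv(\frp)$, either $\frp\in\sigma^m(\css(B))$ for some $m\geq 0$ or $K_0\subseteq\bigcap_{\ell\leq 0}\sigma^\ell(\frp)$, and symmetrically from the second relation either $\frp\in\sigma^m(\css(B))$ for some $m\leq-1$ or $K_0\subseteq\bigcap_{\ell\geq 0}\sigma^\ell(\frp)$. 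As $\sigma$-loneliness bars $\frp$ from lying in two distinct $\sigma$-translates of $\css(B)$, one of the two ``$K_0\subseteq\bigcap$'' alternatives must hold; but $\bigcup_{m\geq 0}\sigma^m\bigl(\bigcap_{\ell\geq 0}\sigma^\ell(\frp)\bigr)$ is a $\sigma$-invariant ideal omitting $1$ (each $\sigma^m(\frp)\neq R$), hence is $0$ by $\sigma$-simplicity, and likewise for the other intersection, so $K_0=0$ — a contradiction. Therefore $K_0=R$, $1\in\cI$, and $\cI=B$.

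I expect the main obstacle to be the necessity direction when $B$ is not $\sigma$-lonely: one has to genuinely exhibit a proper nonzero ideal, and the cleanest route passes through a simple weight module of bounded support, leaning on (a mild extension of) Section~\ref{sec.weight}. A secondary nuisance is disposing of the degenerate cases in the sliding lemma — $\sigma$ of finite order, and passing beyond domains — which is precisely where the $\sigma$-simplicity hypothesis earns its keep.
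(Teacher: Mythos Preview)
The paper does not prove this proposition; it is quoted from \cite{BR} and \cite{GRW1}. So there is no ``paper's own proof'' to compare against, and your argument must stand on its own. Most of it does: the necessity of $\sigma$-simplicity is fine, the weight-module argument for necessity of $\sigma$-loneliness is a legitimate route (though you should take $\frp$ maximal, not merely prime, before invoking Theorem~\ref{thm.weight}), and the second half of your sufficiency proof --- the iteration on $HJ\cdot\sigma(K_0)\subseteq K_0$ and its mirror, the dichotomy forced by $\sigma$-loneliness, and the $\sigma$-invariant union $\bigcup_{m\ge 0}\sigma^m\bigl(\bigcap_{\ell\ge 0}\sigma^\ell(\frp)\bigr)$ killed by $\sigma$-simplicity --- is correct and rather nice.

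The genuine gap is in your sliding lemma, in the branch where $\cI$ has no nonzero homogeneous element and you deduce $\sigma^{q-p}=\id$. You then assert that ``$B$ is strongly graded, and $\cI=B(\cI\cap R)B=0$.'' The equality $\cI=B(\cI\cap R)B$ is \emph{not} a consequence of strong grading for arbitrary two-sided ideals; strongly graded rings can have non-graded ideals. Concretely, take $R=\kk$, $\sigma=\id$, $H=J=R$: then $R$ is $\sigma$-simple, $\css(B)=\emptyset$ so $\sigma$-loneliness holds vacuously, yet $B=\kk[t^{\pm 1}]$ is not simple. So your argument cannot close in this case --- and indeed the statement as printed here is false without an extra hypothesis (present in the cited sources) such as $\sigma$ having infinite order, or equivalently $\cnt(B)\subseteq R$; compare Theorem~\ref{thm.HBRsimple}, whose condition~\eqref{spr3} is exactly what is missing. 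Under that hypothesis the finite-order branch simply cannot occur, your sliding argument then yields a nonzero homogeneous element of $\cI$ directly, and the rest of your proof goes through. You should flag this explicitly rather than try to force the degenerate case.
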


\begin{example}
Set $R=\kk[u^{\pm 1},v^{\pm 1}]$ and define $\sigma \in \Aut(R)$ by $\sigma(u)=pu$ and $\sigma(v)=qv$ with $p,q \in \kk^\times$ such that $G=\grp{p,q}$ is a torsion-free abelian group of rank two. Then $R$ is $\sigma$-simple \cite[Theorem 2.4]{sham}. Set $H=R$ and $J=(u+v,(u+1)^2)$. It is clear that $\css(B)=\{ (u+1,v-1) \}$. Then $\sigma^i(\css(B)) = \{ (u+p^{-i},v-q^{-i}) \}$, so $B$ is $\sigma$-lonely. It follows that $B=R(t,\sigma,H,J)$ is simple.

Since we have assumed that $G$ is torsion-free, then $\sigma$ has infinite order. Hence, by \cite[Lemma 2.1]{GRW1}, $\cnt(B)=\cnt(R)^{\grp{\sigma}}$. An easy check shows that $\cnt(R)^{\grp{\sigma}}=\kk$. It is well-known that the tensor product of two simple algebras, one of which is central simple, is again simple. Thus, $A=\bigotimes_{k=1}^n B$ is a simple BR algebra of rank $n$ by Corollary \ref{cor.tensor}.
\end{example}

\section{A simplicity criterion}
\label{sec.simplicity}

In what follows, we assume that $R$ is a commutative noetherian domain and $B=R_{\bp}(\bt,\bsigma,\bH,\bJ)$ is a BR algebra of rank $n$. The enterprising reader could attempt to remove the domain hypothesis, which simplifies many of our arguments.

Let $\Gamma$ be the subgroup of $\Aut(R)$ generated by $\{\sigma_i: i \in [n]\}$. We say $R$ is \emph{$\Gamma$-simple} if for any ideal $I$, $\sigma(I)=I$ for all $\sigma \in \Gamma$ implies $I=0$ or $I=R$. 

In this section we give a simplicity criterion for BR algebras. We are motivated by the following result of Hartwig and \"{O}inert, and we will follow their methods. This criterion reduces to the one given by Bavula for GWAs \cite[Theorem 4.5]{B3}.

\begin{theorem}[Hartwig, \"{O}inert {\cite[Theorem 7.20]{HO}}]
Let $A=A_\mu(R,\sigma,t)$ be a regularly graded TGWA which is $R$-finitistic of type $(A_1)^n$. Then $A$ is simple if and only if the following hold:
\begin{enumerate}
    \item $R$ is $\Gamma$-simple;
    \item $Z(A) \subset R$;
    \item $R\sigma_i^d(t_i)+Rt_i = R$ for all $i \in 1,\hdots,n$ and all $d \in \ZZ_{>0}$.
\end{enumerate}
\end{theorem}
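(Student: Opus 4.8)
The plan is to prove both implications directly, in the spirit of Hartwig--Öinert's treatment of $\ZZ^n$-graded rings; I will sketch the argument in the main case of interest, where $R$ is a commutative (noetherian) domain, so that $A$ is a domain by Lemma~\ref{lem.domain} and Theorem~\ref{thm.tgwa} (the general TGWA case follows the same outline with more bookkeeping). Write $A=\bigoplus_{\balpha\in\ZZ^n}A_\balpha$ with $A_\bzero=R$; the standing hypotheses (regularly graded, $R$-finitistic, type $(A_1)^n$) guarantee each $A_\balpha$ is a nonzero $R$-subbimodule and that the canonical generators satisfy $X_i^\pm r=\sigma_i^{\pm1}(r)X_i^\pm$, $X_i^-X_i^+=a_i$, $X_i^+X_i^-=\sigma_i(a_i)$, where I write $a_i$ for the central nonzerodivisor denoted $t_i$ in the statement. (A cleaner route, since $R$ is commutative, would be to pass through Theorem~\ref{thm.tgwa} to the associated BR algebra and invoke a BR simplicity criterion, but I describe the direct argument.)

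For ($\Leftarrow$), assume (1)--(3) and let $\cI$ be a nonzero two-sided ideal of $A$; the point is to show $\cI\cap R\neq 0$, after which the rest is quick. Pick $0\neq x\in\cI$ of minimal length (number of nonzero homogeneous components); since $A$ is a domain and each $A_\balpha\neq 0$, left-multiplying by a suitable homogeneous element lets us assume $\bzero\in\supp(x)$ (without increasing the length). If $\supp(x)=\{\bzero\}$ we are done, so suppose some $\balpha_0\neq\bzero$ lies in $\supp(x)$. If $\bsigma^{\balpha_0}\neq\id_R$, choose $r\in R$ with $\bsigma^{\balpha_0}(r)\neq r$; then $xr-\bsigma^{\balpha_0}(r)\,x\in\cI$ has nonzero degree-$\bzero$ component (a nonzero multiple of $r-\bsigma^{\balpha_0}(r)$, using that $R$ is a domain) but vanishing degree-$\balpha_0$ component, contradicting minimality. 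The stubborn degrees are the $\balpha_0\neq\bzero$ with $\bsigma^{\balpha_0}=\id_R$, where $X^{\balpha_0}$ commutes with $R$; these are eliminated using conditions (2) and (3) --- $Z(A)\subseteq R$ forbids such an $X^{\balpha_0}$ from being central, and the comaximality of $(a_i)$ with its $\sigma_i$-translates keeps the coefficient ideals large enough for the length induction to close. Once $0\neq\cI\cap R$, one checks it is $\Gamma$-invariant: conjugating $r\in\cI\cap R$ by $X_i^{\pm}$ yields elements of $\cI\cap R$ differing from $\sigma_i^{\pm1}(r)$ only by a factor of $a_i$ or $\sigma_i(a_i)$, and (3) is exactly what removes these factors. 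Then $\Gamma$-simplicity (1) forces $\cI\cap R=R$, so $\cI=A$.

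For ($\Rightarrow$), assume $A$ is simple. For (1): a proper nonzero $\Gamma$-invariant ideal $I\subsetneq R$ would give the two-sided ideal $AI=\bigoplus_\balpha A_\balpha I$ (two-sided because $A_\balpha I=IA_\balpha$, using the commutation relations together with $\Gamma$-invariance), with $AI\cap R=I\subsetneq R$, contradicting simplicity. For (2): the center of a simple ring is a field, so every nonzero element of $Z(A)$ is a unit; a homogeneous central element of degree $\balpha\neq\bzero$ would then be a homogeneous unit, whence $1\in A_\balpha A_{-\balpha}$ --- but $A_\balpha A_{-\balpha}$ lands in an ideal of $R$ generated by products of $\sigma$-translates of the $a_i$, which is proper under the $R$-finitistic hypothesis --- so no such element exists and $Z(A)\subseteq A_\bzero=R$. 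For (3): if $R\sigma_i^d(a_i)+Ra_i\subsetneq R$ for some $i$ and $d>0$, a prime $\frp$ above this ideal records an ``$i$-break collision,'' and one builds from it an explicit proper nonzero graded ideal of $A$ --- supported in degree $\balpha$ on $A_\balpha$ times a suitable product of $\sigma_i$-translates of $\frp$, arranged to be stable under left and right multiplication by $A_{\pm\be_i}$ --- exactly as in Bavula's GWA argument and its TGWA refinement, contradicting simplicity.

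The main obstacle is the length-reduction in ($\Leftarrow$), and within it the ``non-faithful'' degrees $\balpha\neq\bzero$ on which $\bsigma$ acts trivially on $R$: the simple conjugation trick fails there, and one must squeeze out of conditions (2) and (3) both that such an $X^{\balpha}$ is not central and that the relevant coefficient ideals stay comaximal, so that the induction on length terminates at a degree-$\bzero$ element and the subsequent $\Gamma$-invariance argument goes through. I would handle this by transcribing Hartwig--Öinert's bookkeeping, localizing at the relevant primes of $R$ where needed, and using the dictionary of Theorem~\ref{thm.tgwa} to double-check that each of the three hypotheses is genuinely consumed.
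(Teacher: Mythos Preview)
The paper does not prove this theorem: it is quoted verbatim, with attribution to Hartwig--\"Oinert \cite[Theorem~7.20]{HO}, purely as motivation for the paper's own BR-algebra criterion (Theorem~\ref{thm.HBRsimple}). There is no proof in the paper to compare your attempt against.

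That said, the paper does prove the BR analogue, and its method is structurally different from your direct minimal-length argument. Rather than running the support-reduction inside $A$ itself, the paper first localizes at the Ore set $\cX$ of positive-degree monomials to obtain the skew Laurent ring $C=Q_\bp[\bt^{\pm 1};\bsigma]$ (Lemma~\ref{lem.multset}, Theorem~\ref{thm.loc}), reduces simplicity of $B$ to simplicity of $C$ together with $BxB=B$ for all $x\in\cX$ (Lemma~\ref{lem.simp2}), and then handles $C$ via essentiality of its center (Lemmas~\ref{lem.simp1} and~\ref{lem.cnt2}). The minimal-support trick appears only inside $C$, where all homogeneous elements are units and the ``stubborn'' degrees you flag simply do not arise: one conjugates by $t_i^{\pm 1}$ freely, and the argument collapses to showing any nonzero ideal meets $\cnt(C)$ in an element with constant term $1$. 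Your approach confronts those stubborn degrees head-on inside $A$; you correctly identify this as the crux, but ``these are eliminated using conditions (2) and (3)'' is exactly where all the content lives, and the sketch does not indicate how.

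One concrete issue in your $(\Rightarrow)$ argument for condition~(2): from ``$Z(A)$ is a field'' you pass to ``a nonzero homogeneous central element of degree $\balpha\neq\bzero$ is a homogeneous unit, hence $1\in A_\balpha A_{-\balpha}$, which is proper.'' But $A_\balpha A_{-\balpha}$ need not be proper when some $a_i$ is a unit (think of a quantum torus). The paper's route (Lemma~\ref{lem.cntequiv}) avoids this: if $0\neq a\in \cnt(B)_\balpha$ with $\balpha\neq\bzero$, then $1+a$ is a nonzero element of the field $\cnt(B)$, yet a total order on $\ZZ^n$ shows $1+a$ cannot be inverted in a $\ZZ^n$-graded domain --- no properness of $A_\balpha A_{-\balpha}$ is invoked.
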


For $\balpha,\bbeta \in \ZZ^n$, let
\begin{align}\label{eq.pba}
p_{\bbeta,\balpha} = \prod_{\substack{\beta_i \in \bbeta \\ \alpha_k \in \balpha}} {p_{ik}^{\beta_i\alpha_k}}.
\end{align}
Observe that $p_{\be_i,\be_k} = p_{ik}$ and $p_{\bbeta,\balpha}\inv = p_{\balpha,\bbeta}$. In general, we have $t^\balpha t^\bbeta = p_{\bbeta,\balpha} t^\bbeta t^\balpha$.
We write $\balpha > 0$ to indicate $\alpha_i \geq 0$ for all $i$ and $\balpha \neq \bzero$.

For $b = \sum_{\bbeta \in \ZZ^n} b_\bbeta t^{\bbeta} \in B$, we define $\supp(b)=\{ \bbeta : b_\bbeta \neq 0\}$. We denote the cardinality of $\supp(b)$ by $|\supp(b)|$. By definition, $|\supp(b)|<\infty$.

\begin{lemma}\label{lem.multset}
The multiplicative set 
\[ \cX=\left( \{a t^\balpha~|~a\in I^{(\balpha)}, \balpha > 0\}\cup\{1\} \right)\setminus\{0\}.\]
is a left and right Ore set of $B$.
\end{lemma}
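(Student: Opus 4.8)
The plan is to verify the two Ore conditions directly, exploiting that $B$ is a domain (Lemma~\ref{lem.domain}), which makes the torsion/reversibility clauses in the definition of an Ore set vacuous, and that $B$ together with $\cX$ is $\ZZ^n$-graded. First I would dispatch the easy points: $0 \notin \cX$ by construction, and $\cX$ is closed under multiplication because $\{\balpha \in \ZZ^n : \balpha > 0\} \cup \{\bzero\}$ is a submonoid of $\ZZ^n$, $B_\balpha B_\bbeta \subseteq B_{\balpha+\bbeta}$, and $B$ has no zero divisors; since $\cX \cap B_\bgamma = B_\bgamma \setminus \{0\}$ for $\bgamma > 0$, a product of two nonzero homogeneous elements of positive degree is again such an element, hence lies in $\cX$ (the factor $1$ being harmless).

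The heart of the argument is the observation that in the ambient ring $A = R_\bp[\bt^{\pm 1};\bsigma]$ each graded component $A_\bgamma = R\bt^\bgamma$ is free of rank one over the \emph{commutative} ring $R$. Thus, given $x = a\bt^\balpha \in \cX$ and a homogeneous $b = b_\bbeta\bt^\bbeta \in B_\bbeta$ (the case $b=0$ being trivial), both $xb$ and $bx$ lie in $A_{\balpha+\bbeta} = R\bt^{\balpha+\bbeta}$; writing $xb = u\bt^{\balpha+\bbeta}$ and $bx = v\bt^{\balpha+\bbeta}$ with $u,v \in R$, the identity $uv = vu$ upgrades in $A$ to $(vx)b = v(xb) = u(bx) = (ub)x$. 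Here $vx = (va)\bt^\balpha$ with $va \in I^{(\balpha)}$ (since $I^{(\balpha)}$ is an ideal) and $va \neq 0$, so $vx \in \cX$; and $ub \in B_\bbeta \subseteq B$ since $ub_\bbeta \in I^{(\bbeta)}$. This is exactly the left Ore condition for homogeneous $b$. For general $b = \sum_{\bbeta \in F} b_\bbeta\bt^\bbeta$ I would take $v = \prod_{\bbeta \in F} v_\bbeta$ (nonzero, as $R$ is a domain), note $vx \in \cX$, and check that each component $v\bigl(x\,b_\bbeta\bt^\bbeta\bigr)$ still equals $(b'_\bbeta\bt^\bbeta)x$ for some $b'_\bbeta$ that is an $R$-multiple of $b_\bbeta$, hence $b'_\bbeta \in I^{(\bbeta)}$; summing gives $(vx)b = b'x$ with $b' = \sum_\bbeta b'_\bbeta\bt^\bbeta \in B$. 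The right Ore condition is the mirror image: from the same $xb = u\bt^{\balpha+\bbeta}$, $bx = v\bt^{\balpha+\bbeta}$, applying right (rather than left) multiplication by elements of $R$ yields $b\bigl(x\,\bsigma^{-(\balpha+\bbeta)}(u)\bigr) = x\bigl(b\,\bsigma^{-(\balpha+\bbeta)}(v)\bigr)$, where $x\,\bsigma^{-(\balpha+\bbeta)}(u) = a\,\bsigma^{-\bbeta}(u)\,\bt^\balpha \in \cX$ and $b\,\bsigma^{-(\balpha+\bbeta)}(v) \in B$, and the analogous product trick handles non-homogeneous $b$.

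I do not expect a genuine obstacle once the rank-one-freeness trick is in hand; the one delicate point is the passage from homogeneous $b$ to arbitrary $b \in B$, which requires producing a \emph{single} $x' \in \cX$ serving all homogeneous components of $b$ at once. The product trick does this, but it leans on two facts worth stating explicitly: that each $I^{(\bbeta)}$ is a two-sided ideal of $R$ (so multiplying a coefficient $b_\bbeta$ by an arbitrary element of $R$ keeps it in $I^{(\bbeta)}$), and that $R$ is a domain (so the aggregated multiplier does not collapse $\cX$ to $\{0\}$). The remaining bookkeeping — the scalars arising from reordering $\bt^\balpha\bt^\bbeta \in \kk^\times\bt^{\balpha+\bbeta}$ — I would suppress by arguing intrinsically with the free rank-one $R$-modules $A_\bgamma = R\bt^\bgamma$.
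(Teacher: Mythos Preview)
Your argument is correct. The multiplicative closure and reversibility points are handled as you say, and the rank-one-freeness observation --- that both $xb$ and $bx$ land in the same free cyclic $R$-module $R\bt^{\balpha+\bbeta}$, so commutativity of their $R$-coefficients immediately gives $(vx)b=(ub)x$ --- is a clean way to produce the Ore identity for homogeneous $b$. The product trick for passing to arbitrary $b$ works on both sides exactly as you outline, since each $I^{(\bbeta)}$ is a two-sided ideal and $R$ is a domain.

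The paper's proof takes a more explicit, constructive route. For the right Ore condition with $x=a\bt^{\balpha}$ and $g=\sum_{\bbeta}b_\bbeta\bt^\bbeta$, the paper builds a single denominator $wc\,\bt^{\balpha-\bgamma}$ in one shot, where $w=\prod_{\bbeta\in\supp(g)}\bsigma^{-\bbeta}(a)$ is designed so that the factor $a\bt^\balpha$ can be extracted on the left from every term $(b_\bdelta\bt^\bdelta)(wc\,\bt^{\balpha-\bgamma})$, and the shift by $-\bgamma$ (with $\gamma_i=\min\{\beta_i,0\}$) guarantees that what remains has nonnegative degree and sits in $B$. Your approach instead produces an Ore denominator of degree exactly $\balpha$, treating each homogeneous component separately via the rank-one trick and then aggregating by multiplying the $R$-coefficients. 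The paper's construction is more hands-on and requires tracking which $\sigma$-translates of the canonical ideals appear; your argument sidesteps that bookkeeping entirely by working intrinsically in $A_{\balpha+\bbeta}$, at the cost of a mild induction (or product) over the support of $b$. Both ultimately rest on the same two ingredients you flagged: commutativity of $R$ and the ideal property of the $I^{(\bbeta)}$.
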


\begin{proof}
We will prove that $\cX$ is a right Ore set. The proof that it is a left Ore set is similar. Suppose $x=at^{\balpha} \in I^{(\balpha)}t^\balpha$ for some $\balpha > 0$ and let $g \in B$. We claim that $xB \cap g\cX \neq \emptyset$.

Let $w = \prod_{\bbeta \in \supp(g)} \sigma^{-\bbeta}(a)$. Set $\bgamma=(\gamma_i)$ where for each $i \in [n]$,
\[ \gamma_i = \min\left( \{ \bbeta_i : \bbeta \in \supp(g)\} \cup \{0\} \right).\]
Choose $ct^{\balpha-\bgamma} \in \cX$ so that $wct^{\balpha-\bgamma} \in \cX$.

Let $b_{\bdelta} t^{\bdelta}$ be a nonzero summand in $g$. Set $\widehat{w} = (\sigma^{-\bdelta}(a))\inv w$. Then
\begin{align*}
(b_{\bdelta} t^{\bdelta})(wct^{\balpha-\bgamma})
	&= (b_{\bdelta} t^{\bdelta})(\sigma^{-\bdelta}(a) \widehat{w} ct^{\balpha-\bgamma}) \\
	&= a b_{\bdelta} \sigma^{\bdelta}(\widehat{w}c) t^{\bdelta+\balpha-\bgamma} \\
	&= (a t^{\balpha}) \sigma^{-\balpha}(b_{\bdelta}) \sigma^{\bdelta-\balpha}(\widehat{w}c) t^{\bdelta-\bgamma}.
\end{align*}
Since $c \in I^{(\balpha-\bgamma)}$, then $\sigma^{\bdelta-\balpha}(c) \in I^{(\bdelta-\bgamma)}$ so that $\sigma^{-\balpha}(b_{\bdelta}) \sigma^{\bdelta-\balpha}(\widehat{w}c) t^{\bdelta-\bgamma} \in B$.
Thus, $g(wct^{\balpha-\bgamma}) \in (at^{\balpha})B$, as desired.
\end{proof}

\begin{theorem}\label{thm.loc}
Let $Q=Q(R)$ be the fraction field of $R$. Then $B\cX\inv \iso Q_\bp[\bt^{\pm 1}; \bsigma]$.
\end{theorem}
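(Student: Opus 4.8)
The plan is to produce the map from the universal property of Ore localization and then prove it is an isomorphism by chasing generators. Write $Q=Q(R)$. By Lemma~\ref{lem.domain}, $B$ is a domain, and the inclusions $B\subseteq R_\bp[\bt^{\pm 1};\bsigma]\subseteq Q_\bp[\bt^{\pm 1};\bsigma]$ embed $B$ in the domain $Q_\bp[\bt^{\pm 1};\bsigma]$ (an iterated skew Laurent extension over the field $Q$). Every element $at^\balpha\in\cX$ has $a\in R\setminus\{0\}$, hence is a unit of $Q_\bp[\bt^{\pm 1};\bsigma]$ with inverse $\bsigma^{-\balpha}(a)\inv t^{-\balpha}$. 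Since $\cX$ is an Ore set (Lemma~\ref{lem.multset}), the universal property gives a ring homomorphism $\theta\colon B\cX\inv\to Q_\bp[\bt^{\pm 1};\bsigma]$ which is the inclusion on $B$. Injectivity is formal: if $\theta(bx\inv)=0$ with $b\in B$ and $x\in\cX$, then $\theta(b)=\theta(bx\inv x)=\theta(bx\inv)\theta(x)=0$, so $b=0$ in $B$ (as $\theta|_{B}$ is injective) and hence $bx\inv=0$.

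The real content is surjectivity. The algebra $Q_\bp[\bt^{\pm 1};\bsigma]$ is generated over $\kk$ by $R$, by the inverses $b\inv$ with $0\neq b\in R$, and by the elements $t_i^{\pm 1}$ for $i\in[n]$, so it is enough to place each of these in $\im\theta$. The case of $R$ is immediate from $R=B_\bzero\subseteq B$. For $b\inv$: fix $0\neq j\in J_1=I_1^{(1)}$, so that $jt_1\in B_{\be_1}\subseteq B$ and $bjt_1\in\cX$; then $\theta\big((jt_1)(bjt_1)\inv\big)=j(bj)\inv=b\inv$. For $t_i\inv$: fix $0\neq h\in H_i$, so that $\sigma_i\inv(h)t_i\inv\in B_{-\be_i}\subseteq B$; since $\sigma_i\inv(h)\inv\in Q\subseteq\im\theta$, we get $t_i\inv=\sigma_i\inv(h)\inv\cdot\big(\sigma_i\inv(h)t_i\inv\big)\in\im\theta$. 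The remaining case, $t_i$, is the only one requiring a trick: using the nesting $I_i^{(2)}=J_i\sigma_i(J_i)\subseteq J_i=I_i^{(1)}$, choose $0\neq c\in I_i^{(2)}$ (nonzero by the BRD axiom $I_i^{(k)}\neq 0$). Then $ct_i\in\cX$ because $c\in J_i$, while $ct_i^2=ct^{2\be_i}\in I^{(2\be_i)}t^{2\be_i}=B_{2\be_i}\subseteq B$; since $\cX$ is a left Ore set, the left fraction $(ct_i)\inv(ct_i^2)$ lies in $B\cX\inv$ and $\theta\big((ct_i)\inv(ct_i^2)\big)=t_i$. Thus $\im\theta$ contains a set of algebra generators of $Q_\bp[\bt^{\pm 1};\bsigma]$, so $\theta$ is surjective, hence an isomorphism.

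The step I expect to be the main obstacle is exhibiting $t_i$ in the image: unlike $t_i\inv$, it is not obtained by rescaling an element of $B$ by a scalar from $Q$, and one must instead invert a carefully chosen \emph{positive}-degree element of $\cX$ whose product with $t_i$ still lies in $B$. What makes this work is precisely that the partial products $I_i^{(k)}$ form a descending chain in $k$ and are nonzero — both encoded in the definition of a Bell--Rogalski datum. The computations with $\theta$ are routine applications of the skew Laurent relations $t_i r=\sigma_i(r)t_i$ together with the commutativity of $Q$.
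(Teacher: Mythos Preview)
Your proof is correct and follows essentially the same route as the paper's: both obtain the map via the universal property of Ore localization and then show surjectivity by placing generators of $Q_\bp[\bt^{\pm 1};\bsigma]$ in the image, using the identical computation $(jt_1)(bjt_1)^{-1}=b^{-1}$ to capture $Q$.

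One remark: your handling of $t_i$ is more elaborate than needed. Having already shown $Q\subseteq\im\theta$, you may simply pick any $0\neq j\in J_i$ and write $t_i=j^{-1}\cdot(jt_i)$, with $j^{-1}\in Q\subseteq\im\theta$ and $jt_i\in B$. This is precisely what the paper does (``$a^{-1}(at_1)\in B\cX^{-1}$''), and it avoids the detour through $I_i^{(2)}$ and left fractions. The step you flagged as the main obstacle is in fact no obstacle at all once $Q$ is in hand.
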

\begin{proof}
Let $r \in R$ and choose $a \in J_1$ so that $ra \in J_1$. Then $rat_1 \in \cX$ so we have
\[ (at_1)(rat_1)\inv = (at_1)(t_1\inv a\inv r\inv) = r\inv \in B\cX\inv.\]
This implies, in particular, that $a\inv \in B\cX\inv$, so that $a\inv (at_1) \in b\cX\inv$. Similarly, $t_i\inv \in B\cX\inv$ for all $i$. Thus, $Q_\bp[\bt^{\pm 1}; \bsigma] \subset B\cX\inv$.

On the other hand, it is clear that we have the inclusion $B \hookrightarrow Q_\bp[\bt^{\pm 1}; \bsigma]$. Any element of strictly positive degree in $B$ can be written as $at^\balpha$ for some $a \in I^{(\balpha)}$ and $\balpha > 0$. Then $(t^\balpha)\inv a\inv$ is its inverse, and this element is contained in $Q_\bp[\bt^{\pm 1}; \bsigma]$. Hence, each positive degree element of $B$ is a unit in $Q_\bp[\bt^{\pm 1}; \bsigma]$ and so by the universal property of localization, we have the inclusion $B\cX\inv \hookrightarrow Q_\bp[\bt^{\pm 1}; \bsigma]$.
\end{proof}

Henceforth, we set $C=B\cX\inv$. By Theorem \ref{thm.loc}, $C$ is the graded quotient ring of $B$ and is, trivially, a TGWA of type $(A_1)^n$.
Consequently, we may apply results from \cite[Section 7]{HO} directly to this ring. However, as some of our notation is diferent and because we will occasionally need to refer to elements of the proofs, we maintain these arguments for the benefit of the reader.

Let $K$ be an ideal in $B$, then by \cite[Lemma 7.9]{HO},
\begin{align}\label{eq.ideal}
CKC = \{x\inv ay\inv : x,y \in \cX, a \in K\}.
\end{align}

\begin{lemma}\label{lem.cnt1}
\begin{enumerate}
\item For any nonzero $c \in C$, there exists a nonzero element $r \in R$ such that $rc \in B$.
\item If $BxB=B$ for all $x \in \cX$, then $\cnt(B)=\cnt(C)$.
\end{enumerate}
\end{lemma}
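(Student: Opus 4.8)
For part (1), the plan is to use the description of $C = B\cX\inv$ as a ring of right fractions. Every element $c \in C$ can be written as $c = b x\inv$ for some $b \in B$ and $x \in \cX$. Now $x$ is either $1$ (in which case $c = b \in B$ and there is nothing to do) or $x = a\bt^{\balpha}$ for some $\balpha > 0$ and $a \in I^{(\balpha)}$. In the latter case I would look for a nonzero $r \in R$ so that $rc \in B$; since $c x = b$, multiplying on the left by something that ``cancels'' $x\inv$ is the issue. Instead I will argue directly: $x\inv = (\bt^{\balpha})\inv a\inv$ inside $Q_\bp[\bt^{\pm 1};\bsigma]$, and $(\bt^{\balpha})\inv = \bt^{-\balpha}$ up to a scalar from $\bp$. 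Choosing any nonzero $r_0 \in I^{(\balpha)}$ (which is nonzero by the BRD axioms), the element $r_0 \bt^{\balpha} \cdot x\inv = r_0 \bt^{\balpha}(\bt^{\balpha})\inv a\inv = r_0 a\inv$ need not lie in $R$, so I instead clear denominators in $R$: writing $a\inv$ as $\tfrac{1}{a}$ in the fraction field $Q$, I have $a \cdot a\inv = 1$, and more generally for the finitely many denominators appearing when $b x\inv$ is rewritten as a $Q$-coefficiented Laurent polynomial via Theorem~\ref{thm.loc}, their product $r \in R\setminus\{0\}$ satisfies $rc \in R_\bp[\bt^{\pm1};\bsigma]$ with coefficients in $R$; combined with the fact that $c$ has degrees bounded so that each homogeneous component $r c_{\bbeta}\bt^{\bbeta}$ lies in $I^{(\bbeta)}\bt^{\bbeta}$ after possibly enlarging $r$ by elements of the relevant $I^{(\bbeta)}$'s, we get $rc \in B$. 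The cleanest route is: use Theorem~\ref{thm.loc} to view $c \in Q_\bp[\bt^{\pm1};\bsigma]$, let $\{r c_{\bbeta} : \bbeta \in \supp(c)\}$ be its (finitely many) coefficients in $Q$, pick $r \in R$ a common denominator, and then further multiply by a nonzero element of $\prod_{\bbeta} I^{(\bbeta)}$ (nonzero by the BRD condition and since $R$ is a domain) to land the coefficient of $\bt^{\bbeta}$ inside $I^{(\bbeta)}$.

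For part (2), assume $BxB = B$ for all $x \in \cX$. The inclusion $\cnt(B) \subseteq \cnt(C)$ requires showing a central element $z$ of $B$ remains central in $C$; since $C$ is generated by $B$ together with the inverses $x\inv$ for $x \in \cX$, it suffices to check $z$ commutes with each $x\inv$, equivalently with each $x$, which holds as $z \in \cnt(B)$ and $x \in B$. For the reverse inclusion, let $z \in \cnt(C)$. By part (1) there is a nonzero $r \in R$ with $rz \in B$; I want to upgrade this to $z \in B$. Here is where the hypothesis $BxB = B$ enters: the set $\{b \in B : bz \in B\}$ is a nonzero two-sided ideal of $B$ (it is a left ideal since $z$ is central, and a right ideal since $(br)z = b(zr) = b(rz) \in B$ using centrality; nonzero since $r$ lies in it). The hypothesis $BxB = B$ says the Ore elements generate the unit ideal, and one shows via \eqref{eq.ideal} that any nonzero ideal of $B$ meeting $R$ nontrivially — in fact the ideal $\{b : bz \in B\}$ — must be all of $B$, forcing $1 \cdot z = z \in B$; then $z \in \cnt(B)$ since $z$ commutes with all of $B \subseteq C$.

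The main obstacle I anticipate is part (2): making precise why $\{b \in B : bz \in B\}$ being a nonzero ideal, together with $BxB = B$ for $x \in \cX$, forces this ideal to be everything. This should follow because $C = B\cX\inv$ and a nonzero ideal of $B$ contains, after the Ore condition, an element of $\cX$ times something — more carefully, if $J$ is a nonzero two-sided ideal of $B$ then $CJC$ is a nonzero ideal of $C$; if $C$ were simple this would give $CJC = C$ hence $1 \in CJC$, and then \eqref{eq.ideal} plus $BxB=B$ would pull this back to $1 \in J$. But $C = Q_\bp[\bt^{\pm1};\bsigma]$ need not be simple, so instead I will argue that $J$ contains a nonzero element of $R$ (clearing $\bt$'s using the Ore set) and hence, by a $\Gamma$-type argument or directly, that $J \cap R$ is a nonzero ideal; combined with $z$ central this should be enough to conclude $1 \in J$ under the hypothesis $BxB = B$. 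I expect the clean formulation to mirror \cite[Lemma 7.9 and the surrounding discussion]{HO}, and the proof will invoke \eqref{eq.ideal} as the key computational input.
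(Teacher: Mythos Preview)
Your approach to part~(1) is correct and arguably cleaner than the paper's: you use Theorem~\ref{thm.loc} to view $c$ inside $Q_\bp[\bt^{\pm 1};\bsigma]$, clear the finitely many $Q$-denominators to land in $R_\bp[\bt^{\pm 1};\bsigma]$, and then multiply by a nonzero element of $\prod_{\bbeta \in \supp(c)} I^{(\bbeta)}$ (nonzero since $R$ is a domain and each $I^{(\bbeta)} \neq 0$) to force every coefficient into the correct ideal. The paper instead writes $c$ as a product of inverses of generators of $\cX$ times an element of $B$ and inductively multiplies on the left by specific elements of $R$ built from the $J_i$ and $H_i$; both routes work.

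Part~(2), however, has a genuine gap. Your claim that ``any nonzero ideal of $B$ meeting $R$ nontrivially must be all of $B$'' under the hypothesis $BxB = B$ for $x \in \cX$ is false in general: if $R$ is not $\Gamma$-simple, a proper $\Gamma$-invariant ideal $K \subseteq R$ yields a proper ideal $BKB \subsetneq B$ meeting $R$ in $K$. What you are missing is that your \emph{particular} ideal $J = \{b \in B : bz \in B\}$ contains an element of $\cX$. Since $\cX$ is a left Ore set, write $z = x^{-1}b$ with $x \in \cX$, $b \in B$; then $xz = b \in B$, so $x \in J$, and now $B = BxB \subseteq J$ gives $z \in B$. (Equivalently, from the right-fraction form $z = bx^{-1}$ and centrality of $z$, one has $xz = zx = b \in B$.) This is exactly what the paper does, only without naming $J$: it writes $c = x^{-1}b$, uses $BxB = B$ to write $1 = \sum u_i x v_i$, and computes $c = \sum u_i x v_i c = \sum u_i (xc) v_i = \sum u_i b v_i \in B$. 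The detour through simplicity of $C$ or $\Gamma$-simplicity of $R$ that you sketch in your final paragraph is unnecessary and, as you yourself note, would not go through without those extra hypotheses.
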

\begin{proof}
(1) Let $c \in C$ be nonzero. Then $c=(j_{i_1}t_{i_1})^{-1}\cdots(j_{i_k}t_{i_k})^{-1}b'$ for some $b' \in B$. Note that the $t_{i_k}$ need not be distinct. Since $(j_{i_1}t_{i_1})\inv=t_{i_1}^{-1} j_{i_1}\inv$, then 
\[ \sigma_{i_1}\inv(j_{i_1}\inv)(t_{i_1}^{-1} j_{i_1}\inv) = t_{i_1}^{-1} j_{i_1}j_{i_1}\inv = t_{i_1}^{-1}.\]
By repeated applications of this, we may assume, up to multiplication by an element in $R$, that $c = t_{i_1}\inv \cdots t_{i_k}\inv b$ for some $b \in B$. Let $v \in H_{i_k}$ and set $v' = (\sigma_{i_1}\inv \circ \cdots \sigma_{i_{k-1}}\inv \circ \sigma_{i_k}\inv)(v) \in R$. Then
\[ v' c =  t_{i_1}\inv \cdots t_{i_{k-1}}\inv \sigma_{i_k}\inv(v)t_{i_k}\inv b.\]
But $\sigma_{i_k}\inv(v)t_{i_k}\inv \in B$. The result now follows by induction. 

(2) That $\cnt(B) \subset \cnt(C)$ is clear. Let $c \in \cnt(C)$. Write $c=x\inv b$ for some $x \in \cX$, $b \in B$. Since $1=\sum u_i x v_i$ for some $u_i,v_i \in B$, then
\[ c = (1)c = \sum u_i x v_i c = \sum u_i (xc) v_i = \sum u_i b v_i \in B.\]
Thus, $c \in B$, so $c \in \cnt(B)$.
\end{proof}

The $\ZZ^n$-grading on $B$ extends naturally to $C$ and so does the notion of support. The ring $C$ is, trivially, a TGWA. Therefore, Lemma \ref{lem.simp1} and \ref{lem.cnt2} follow from corresponding results in \cite{HO}. However, as our notation is different, we include their short proofs here.

\begin{lemma}\label{lem.simp1}
Assume $R$ is $\Gamma$-simple.
Let $c \in C$ be nonzero. Then there exists an element $c' \in C$ satisfying:
\begin{enumerate}
\item $c' \in CcC$,
\item $(c')_\bzero=1$, and
\item $|\supp(c')| \leq |\supp(c)|$.
\end{enumerate}
\end{lemma}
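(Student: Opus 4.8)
The plan is to exploit the $\ZZ^n$-grading on $C = Q_\bp[\bt^{\pm1};\bsigma]$ together with $\Gamma$-simplicity of $R$, following the strategy of \cite[Section 7]{HO}. First I would choose, among all nonzero elements of the two-sided ideal $CcC$, an element $e$ with $|\supp(e)|$ minimal; such an $e$ exists and satisfies $|\supp(e)| \le |\supp(c)|$ since $c \in CcC$, giving condition (3). The goal is then to show that after multiplying $e$ by suitable homogeneous units and a well-chosen element of $R$, we may arrange that $\bzero \in \supp(e)$ and that the degree-$\bzero$ component is exactly $1$.

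The key steps: Pick any $\bbeta \in \supp(e)$ and write $e = \sum_{\balpha} e_\balpha \bt^\balpha$ with $e_\bbeta \neq 0$. Since $Q$ is a field and $\bt^{-\bbeta}$ is a homogeneous unit in $C$, replacing $e$ by $(\text{unit})\cdot e \cdot \bt^{-\bbeta}$ (or by $e \cdot \bt^{-\bbeta}$ followed by left multiplication) shifts the support so that $\bzero \in \supp(e)$, with $|\supp(e)|$ unchanged. So assume $e_\bzero \neq 0$. Now consider the $R$-bimodule (or $R$-subspace) spanned by $\{r e_\bzero s : r,s \in R\}$; more precisely, look at the set $\mathfrak{a} = \{ d_\bzero : d \in CeC,\ \supp(d) \subseteq \supp(e),\ d \text{ has minimal support} \}$ together with $0$. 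Using that conjugating $e$ by a homogeneous element $r\bt^\bgamma$ replaces each component $e_\balpha$ by $r\,\bsigma^{\bgamma}(e_\balpha)\,\bsigma^{?}(\cdots)$ — i.e. twists the degree-$\bzero$ component by powers of the $\sigma_i$ — and that left/right multiplication by elements of $R$ multiplies $e_\bzero$ on either side, one checks $\mathfrak{a}$ is a nonzero ideal of $R$ stable under all $\sigma_i \in \Gamma$. Here the minimality of $|\supp(e)|$ is what guarantees that forming these combinations does not produce strictly smaller (hence zero) support: any element of $CeC$ with support contained in $\supp(e)$ either has full support $\supp(e)$ or is zero. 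By $\Gamma$-simplicity, $\mathfrak{a} = R$, so $1 \in \mathfrak{a}$; this yields an element $c' \in CcC$ with $(c')_\bzero = 1$, $\supp(c') \subseteq \supp(e)$, and $|\supp(c')| = |\supp(e)| \le |\supp(c)|$, which is exactly what is required.

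The main obstacle I anticipate is verifying carefully that $\mathfrak{a}$ is genuinely a $\Gamma$-stable ideal of $R$ — in particular that it is closed under addition. Closure under addition of two elements $d_\bzero, d'_\bzero$ requires that $d + d'$ (or a suitable homogeneous twist making their leading terms line up) still has support contained in $\supp(e)$ and is either zero or of minimal support; this is where one must use that any nonzero element of $CeC$ supported inside $\supp(e)$ has support \emph{equal} to $\supp(e)$, so that $d + d'$ having smaller support forces it to be $0$, i.e. $d_\bzero + d'_\bzero$ lies in $\mathfrak{a} \cup \{0\}$. Stability under $\sigma_i$ comes from conjugating by $\bt^{\pm\be_i}$ (which lies in $C$), and the ideal property from multiplying on left and right by $R \subseteq C$. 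Once $\mathfrak{a}$ is seen to be a nonzero $\Gamma$-invariant ideal, $\Gamma$-simplicity finishes the argument immediately. The commutativity of $R$ is used to ensure $\mathfrak{a}$ is two-sided and that the $R$-span of a single element behaves well.
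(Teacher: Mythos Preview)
Your plan is workable and in the spirit of \cite{HO}, but it differs from the paper's argument and introduces an unnecessary detour. The paper does \emph{not} first pass to a minimal-support element of $CcC$. After shifting so that $c_\bzero \neq 0$ and using Lemma~\ref{lem.cnt1}(1) to get $rc_\bzero \in R$, it simply sets
\[
K = \Bigl\{\, s \in R \;\Big|\; s + \textstyle\sum_{\balpha \in \supp(c)\setminus\{\bzero\}} d_\balpha \in CcC \text{ for some } d_\balpha \in C_\balpha \,\Bigr\}.
\]
Because the $d_\balpha$ are allowed to range over all of $C_\balpha$ (including $0$), closure of $K$ under addition and under $R$-multiplication is automatic; conjugation by $t_i^{\pm 1}$ gives $\Gamma$-invariance, and $\Gamma$-simplicity finishes. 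Your extra requirement that $d$ have \emph{full} support equal to $\supp(e)$ is what forces you to invoke minimality just to get closure under sums --- correct, but avoidable with the paper's looser definition.

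There is also a small gap in your argument as written: your $\mathfrak{a}$ consists of degree-$\bzero$ components $d_\bzero \in C_\bzero = Q$, which need not lie in $R$, so $\mathfrak{a}$ is not literally an ideal of $R$ and $\Gamma$-simplicity of $R$ does not apply directly. You would need to intersect with $R$ and check that this intersection is still nonzero (by clearing a denominator of $e_\bzero$); this is exactly the role played by the element $r$ from Lemma~\ref{lem.cnt1}(1) in the paper's proof. Incidentally, since here $C_\bzero = Q$ is already a field, once $c_\bzero \neq 0$ one may simply take $c' = c_\bzero^{-1}c$; so in this concrete setting neither the minimality step nor $\Gamma$-simplicity is genuinely needed for this particular lemma, though the paper's formulation is what one would want if $C_\bzero$ were merely $R$.
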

\begin{proof}
Write $c$ according to the $\ZZ^n$-grading and choose $\balpha \in \ZZ^n$ such that $c_\balpha \neq 0$. Because $B$ (and hence $C$) is a domain, then we can replace $c$ by $ct^{-\balpha}$ so that we may assume that $c_\bzero \neq 0$. Now by Lemma \ref{lem.cnt1} (1), there exists a nonzero $r \in R = B_\bzero$ such that $rc \in B$. Since $B$ is a graded domain, then $rc_0 \neq 0$. 

Consider the set
\[ K = \left\lbrace s \in R ~\middle|~ s + \sum_{\balpha \in \supp(c) \backslash\{0\}} d_\balpha \in CcC \text{ for some }d_\balpha \in C_\balpha\right\rbrace.\]
Taking $d_\balpha = rb_\balpha$ for each $\balpha \in \supp(c) \backslash\{0\}$ shows that $rc_0 \in K$. Thus, $K$ is a nonzero ideal of $R$. 

We now show that $K$ is $\Gamma$-invariant. Let $s \in K$, so $s+\sum_{\balpha \in \supp(c) \backslash\{0\}} d_\balpha \in CcC$ for some $d_\balpha \in C_\balpha$. Then
\[
CcC \ni t_i \left(s+\sum_{\balpha \in \supp(c) \backslash\{0\}} d_\balpha\right)t_i\inv = \sigma_i(s) + \sum_{\balpha \in \supp(c) \backslash\{0\}} t_i d_\balpha t_i\inv.
\]
Clearly, $t_id_\balpha t_i\inv C_\balpha$ for each $\balpha$. This shows that $\sigma_i(s) \in K$. Similarly one shows that $\sigma_i\inv(s) \in K$. Thus, $K$ is a nonzero $\Gamma$-invariant ideal of $R$, so by $\Gamma$-simplicity, we have $K=R$. Thus, $1 \in K$, so there exists $d_\balpha \in C_\balpha$ such that $c':=1 + \sum_{\balpha \in \supp(c) \backslash\{0\}} d_\balpha \in CcC$. It is clear that $c'$ satisfies the three properties.
\end{proof}

We say a subalgebra $A'$ of an algebra $A$ is \emph{essential} if $A' \cap I \neq \{0\}$ for every nonzero ideal $I$ of $A$.
Since $C$ is a TGWA (of type $(A_1)^n$), then \cite[Theorem 7.8]{HO} immediately implies the following. (In that result, take $A$ to be $C$, so that the $B$ in that result is also $C$.)

\begin{lemma}\label{lem.cnt2}
Let $R$ be $\Gamma$-simple. Then $\cnt(C)$ is an essential subalgebra of $C$.
\end{lemma}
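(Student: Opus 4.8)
The plan is to run the classical minimal-support argument showing that the center of a $\ZZ^n$-graded domain meets every nonzero ideal once its degree-zero component has no nontrivial invariant ideals; this is exactly the content of \cite[Theorem 7.8]{HO} applied to the TGWA $C$, but since our notation differs I would spell it out. Recall from Theorem~\ref{thm.loc} that $C = Q_\bp[\bt^{\pm 1};\bsigma]$ with $Q = Q(R)$, so $C$ is $\ZZ^n$-graded with $C_\balpha = Q\,\bt^\balpha$, it is generated as an algebra by $Q$ together with $t_1^{\pm 1},\dots,t_n^{\pm 1}$, and it is a domain by Lemma~\ref{lem.domain}. Given a nonzero ideal $I$ of $C$, I must produce a nonzero element of $\cnt(C)\cap I$.

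First I would pick $0\neq c\in I$ with $|\supp(c)|$ minimal among nonzero elements of $I$ (possible, as these cardinalities form a nonempty set of positive integers). Since $R$ is $\Gamma$-simple, Lemma~\ref{lem.simp1} yields $c'\in CcC\subseteq I$ with $(c')_\bzero = 1$ and $|\supp(c')|\leq|\supp(c)|$; as $c'\neq 0$, minimality forces $|\supp(c')| = |\supp(c)|$. I then claim $c'$ is central, checking this against the generators $Q$ and the $t_i^{\pm 1}$. Writing $c' = \sum_\balpha c'_\balpha\bt^\balpha$ with $c'_\balpha\in Q$, commutativity of $R$ gives $rc' - c'r = \sum_\balpha c'_\balpha\bigl(r - \bsigma^\balpha(r)\bigr)\bt^\balpha$ for $r\in R$; its degree-$\bzero$ term vanishes because $(c')_\bzero = 1$, so this element of $I$ has strictly smaller support than $c'$ and hence is $0$. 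Since $C$ is a domain, $c'_\balpha\neq 0$ then forces $\bsigma^\balpha(r) = r$ for all $r\in R$ and all $\balpha\in\supp(c')$, so $c'$ commutes with $R$ and therefore with $Q$. An analogous computation for $t_ic' - c't_i$ shows its degree-$\be_i$ coefficient is the $\balpha=\bzero$ contribution $\sigma_i(1) - 1 = 0$, so its support is again strictly smaller than that of $c'$ and it vanishes. Thus $c'$ commutes with $Q$ and every $t_i^{\pm 1}$, hence $c'\in\cnt(C)\cap I$ is nonzero, and $\cnt(C)$ is essential.

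I do not anticipate a real obstacle here: once Lemma~\ref{lem.simp1} is in hand the argument is bookkeeping with $\ZZ^n$-degrees. The two points that need a little care are (i) that the normalization $(c')_\bzero = 1$ is exactly what kills the degree-$\bzero$ (resp.\ degree-$\be_i$) coefficient of the commutator with $R$ (resp.\ with $t_i$), so that the support genuinely drops, and (ii) the passage from $c'_\balpha\bigl(r - \bsigma^\balpha(r)\bigr) = 0$ to $\bsigma^\balpha(r) = r$, which uses that $C$ is a domain. Everything else is self-contained given the results already established in this section, so one need not invoke \cite[Theorem 7.8]{HO} beyond noting the agreement.
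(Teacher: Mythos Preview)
Your argument is correct and follows essentially the same minimal-support strategy as the paper: pick a nonzero element of the ideal with minimal support, normalize it via Lemma~\ref{lem.simp1} so that its degree-$\bzero$ component is $1$, and then show the commutators with $R$ and with each $t_i$ have strictly smaller support, forcing them to vanish. The only cosmetic differences are that the paper uses $t_ic't_i^{-1}-c'$ rather than $t_ic'-c't_i$, and that you take the extra (harmless, though unnecessary) step of deducing $\bsigma^\balpha(r)=r$ on $\supp(c')$ to pass explicitly from $R$ to $Q$.
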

\begin{proof}
Let $K$ be a nonzero ideal of $C$. Choose $c \in K$ such that $c \neq 0$ and $|\supp(c)|$ is as small as possible. Let $c' \in CcC \subset K$ be the corresponding element determined by Lemma \ref{lem.simp1}. Note that by minimality we have $|\supp(c')|=|\supp(c)|$. 

Let $r \in R$. Then $(c'r-rc')_\bzero = r-r=0$, so $|\supp(c'r-rc')|<|\supp(c')|$. Since $c'r-rc' \in J$, then minimality implies that $c'r-rc'=0$. Choose $i \in \{1,\hdots,n\}$. Then $(t_ic't_i\inv - c')_0 = 1-1=0$, so the same argument as above implies that $t_ic'=c't_i$. It follows that $c' \in \cnt(C)$.
\end{proof}

The previous lemma shows a bit more. In particular, $c' \in K \cap \cnt(C) \cap \left(1+ \sum_{\balpha \in \ZZ^n\backslash\{0\}} C_\balpha\right)$.

\begin{lemma}\label{lem.simp2}
The BR algebra $B$ is simple if and only if the following conditions hold:
\begin{enumerate}
\item $C$ is simple
\item $BxB=B$ for all $x \in \cX$.
\end{enumerate}
\end{lemma}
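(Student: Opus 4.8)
The plan is to prove both implications directly, using the localization $C = B\cX\inv$ from Theorem \ref{thm.loc} and the description \eqref{eq.ideal} of extended-and-contracted ideals. First suppose $B$ is simple. To see that condition (2) holds, fix $x \in \cX$ and consider the two-sided ideal $BxB$. Since $x$ has strictly positive degree (or $x = 1$, in which case there is nothing to prove), $BxB$ is a nonzero ideal of $B$, hence $BxB = B$ by simplicity. For condition (1), let $K$ be a nonzero two-sided ideal of $C$. Then $K \cap B$ is a two-sided ideal of $B$; I claim it is nonzero. Indeed, given a nonzero $c \in K$, Lemma \ref{lem.cnt1}(1) produces a nonzero $r \in R$ with $rc \in B$, and $rc \in K \cap B$ is nonzero because $C$ is a domain (Lemma \ref{lem.domain}). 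Thus $K \cap B$ is a nonzero ideal of $B$, so $K \cap B = B$ by simplicity, whence $1 \in K$ and $K = C$. Therefore $C$ is simple.

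Conversely, suppose (1) and (2) hold, and let $K$ be a nonzero two-sided ideal of $B$. The extension $CKC$ is a nonzero two-sided ideal of $C$, so by simplicity of $C$ we have $CKC = C$, and in particular $1 \in CKC$. By \eqref{eq.ideal}, we may write $1 = x\inv a y\inv$ for some $x, y \in \cX$ and $a \in K$; equivalently $a = x y$ after rearranging, but more usefully, $x = a y\inv$, i.e. $x \in K y\inv$, so $xy \in K$ — wait, more carefully: from $1 = x\inv a y\inv$ we get $a = x y \in K$. Then since condition (2) gives $B x B = B$ and $B y B = B$, and since $xy \in K$, we compute $B = B x B = B x (B y B) = B(xBy)B$. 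Now $x B y \subseteq B$ (as $x, y \in B$), and more to the point, since $xy \in K$ and we want to extract that $B \subseteq K$: the cleanest route is to observe that $B x B \cdot B y B \subseteq B (xy') B$-type products are unwieldy, so instead use that $B x B = B$ means $\sum u_j x v_j = 1$ for some $u_j, v_j \in B$, and similarly $\sum u_k' y v_k' = 1$; then
\[
1 = \Big(\sum_j u_j x v_j\Big)\Big(\sum_k u_k' y v_k'\Big) = \sum_{j,k} u_j x v_j u_k' y v_k',
\]
and each term $u_j x v_j u_k' y v_k'$ need not lie in $K$ because $xy \in K$ does not immediately give $x(\cdot)y \in K$. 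The main obstacle is precisely this noncommutative bookkeeping: I expect the correct argument to instead run through showing directly that $CKC \cap B = K$ under hypothesis (2), which is where Lemma \ref{lem.cnt1}(2) and the explicit form \eqref{eq.ideal} are designed to be used.

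Here is the corrected key step. Assuming (2), I claim $CKC \cap B = K$ for every ideal $K$ of $B$. The inclusion $\supseteq$ is clear. For $\subseteq$, take $b \in CKC \cap B$; by \eqref{eq.ideal}, $b = x\inv a y\inv$ with $x, y \in \cX$, $a \in K$, so $x b y = a \in K$. Since $B x B = B$, write $1 = \sum_j u_j x v_j$; since $B y B = B$, write $1 = \sum_k s_k y w_k$. Then
\[
b = \Big(\sum_j u_j x v_j\Big) \, b \, \Big(\sum_k s_k y w_k\Big) = \sum_{j,k} u_j\, (x v_j b s_k y)\, w_k.
\]
Now $v_j b s_k \in B$ and one checks $x (v_j b s_k) y \in K$: indeed $x v_j b s_k y = x v_j (x\inv a y\inv) s_k y = (x v_j x\inv) a (y\inv s_k y)$, and since conjugation by $x$ (respectively $y$) sends $B$ into $C$ but we need it inside $B$ — this still requires care, so in fact the right formulation is $v_j b s_k \in B$ and $x(\cdot)y$ carries $b = x\inv a y\inv$-shaped elements appropriately; this is exactly \cite[Lemma 7.9 ff.]{HO} and I would cite it. Granting the claim $CKC \cap B = K$: if $B$ is not simple, pick $K$ a proper nonzero ideal; then $CKC$ is a proper (by the claim, since $CKC \cap B = K \neq B$) nonzero ideal of $C$, contradicting (1). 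Hence $B$ is simple.

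The main obstacle, as flagged, is verifying the identity $CKC \cap B = K$ under hypothesis (2) — i.e. that contraction back from $C$ recovers $K$ exactly. This is the content of the TGWA machinery in \cite[Section 7]{HO} applied to $C$ (which is a TGWA of type $(A_1)^n$ by Theorem \ref{thm.loc}), and the cleanest writeup would invoke \cite[Lemma 7.9]{HO} together with the fact that $BxB = B$ makes every $x \in \cX$ "invertible modulo $K$" in the appropriate sense. Everything else — the forward direction, the domain reductions, the use of Lemma \ref{lem.cnt1} — is routine.
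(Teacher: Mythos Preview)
Your forward direction is fine and matches the paper's argument. The backward direction, however, has a genuine gap that stems from missing one simple observation.

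You correctly reach $a = xy \in K$ from $1 = x\inv a y\inv$. At this point the paper's proof is one line: since $\cX$ is a \emph{multiplicative} set (Lemma~\ref{lem.multset}), the product $xy$ again lies in $\cX$, so condition~(2) applied directly to $xy$ gives $B(xy)B = B$. Hence $B = BaB \subseteq K$ and $K = B$. You instead try to deduce $B(xy)B = B$ from $BxB = B$ and $ByB = B$ separately, run into the (real) problem that $\sum_{j,k} u_j x v_j s_k y w_k$ need not lie in $B(xy)B$, and then retreat to the much stronger claim $CKC \cap B = K$, which you do not establish and which is not needed.

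The detour through $CKC \cap B = K$ is not salvaged by citing \cite[Lemma 7.9]{HO}: that lemma is exactly \eqref{eq.ideal}, which only describes $CKC$, not its contraction to $B$. Your conjugation computation $x v_j b s_k y = (x v_j x\inv) a (y\inv s_k y)$ is correct in $C$ but, as you note, $x v_j x\inv$ and $y\inv s_k y$ need not lie in $B$, so the argument does not close. Drop all of this and use $xy \in \cX$.
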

\begin{proof}
Suppose that $B$ is simple. Then clearly (2) holds. Let $K$ be a nonzero ideal in $C$ and let $b \in K$ be nonzero. Then there exists $x \in \cX$ such that $xb \in B$. Then $0 \neq xb \in B \cap K$. Thus, $B \cap K$ is a nonzero ideal of $B$ and so $1 \in B \cap K \subset K$. It follows that $C$ is simple.

Now assume that the two conditions hold. Let $K$ be an ideal of $B$. Then $CKC$ is a two-sided ideal of $C$, so $1 \in CKC$. Now by \eqref{eq.ideal}, $1=x\inv a y\inv $ for some $x,y \in \cX$ and $a \in K$. That is, $xy = a \in K$. The second condition implies that $B=BxyB=BaB \subset K$. Thus $K=B$.
\end{proof}

\begin{lemma}\label{lem.cntequiv}
Consider the following:
\begin{enumerate}
\item \label{cnt1} $\cnt(B)$ is a field,
\item \label{cnt2} $\cnt(B) \subset R$, and
\item \label{cnt3} $\cnt(B) = R^\Gamma$.
\end{enumerate}
Then $\eqref{cnt1} \Rightarrow \eqref{cnt2} \Rightarrow \eqref{cnt3}$. If $R$ is $\Gamma$-simple, then all three are equivalent.
\end{lemma}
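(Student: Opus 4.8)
The strategy is to prove the two implications directly from the definitions, and then to close the loop $\eqref{cnt3} \Rightarrow \eqref{cnt1}$ under the extra hypothesis of $\Gamma$-simplicity.

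For $\eqref{cnt1} \Rightarrow \eqref{cnt2}$: suppose $\cnt(B)$ is a field and let $z \in \cnt(B)$ be a nonzero homogeneous component consideration. Since $B = \bigoplus_{\balpha} B_\balpha$ is $\ZZ^n$-graded and the grading is by a (torsion-free) group, the center is a graded subalgebra, so it suffices to rule out nonzero central elements of nonzero degree. If $z \in \cnt(B)_\balpha$ with $\balpha \neq \bzero$ were nonzero, then (since $B$ is a domain by Lemma~\ref{lem.domain}) $z$ is a non-zero-divisor, and $z^k$ is a nonzero central element of degree $k\balpha$ for every $k \geq 1$; in particular $z$ is not a unit in $B$ (its inverse would have to have degree $-\balpha$, and $z \cdot z\inv$ would force $B_{\balpha}B_{-\balpha} \ni 1$, but one can check this is impossible unless... ) — more simply, $\cnt(B)$ being a field means every nonzero element of $\cnt(B)$ is invertible in $\cnt(B) \subseteq B$, while a homogeneous element of nonzero degree in a $\ZZ^n$-graded domain whose degree-$\bzero$ part is $R \neq 0$ cannot be a unit of $B$ (a unit must have degree $\bzero$, since $\deg(z) + \deg(z\inv) = \bzero$ and the only way a product of nonzero homogeneous pieces lands in $B_\bzero=R$ with value $1$ forces both degrees to vanish once one notes $B_\balpha B_{-\balpha} = I^{(\balpha)}\sigma^\balpha(I^{(-\balpha)}) \subseteq R$ is a proper ideal in the relevant cases — this needs the $\Gamma$-simple-free setup, so instead I will argue: if $z$ is a homogeneous unit of degree $\balpha \ne \bzero$ then $B$ is strongly graded in the $\balpha$ direction, but $B_{\be_i} = J_i t_i$ with $J_i$ a proper ideal unless $J_i = R$, etc.). Granting $\cnt(B) \subseteq R$, then in fact each central element, being in $B_\bzero = R$ and commuting with each $t_i$ (more precisely with the generators $j_it_i$ and $\sigma_i\inv(h_i)t_i\inv$), must be $\sigma_i$-invariant for all $i$ — this gives $\cnt(B) \subseteq R^\Gamma$, establishing $\eqref{cnt2} \Rightarrow \eqref{cnt3}$; conversely any element of $R^\Gamma$ is central in $B$ since it commutes with all of $R$ (as $R$ is commutative) and with each $t_i$ (by $\Gamma$-invariance and the skew Laurent relations), so in fact $\eqref{cnt3}$ is the equality $\cnt(B) = R^\Gamma$ and $\eqref{cnt2} \Leftrightarrow \eqref{cnt3}$ holds unconditionally once $\eqref{cnt2}$ is known to imply $\cnt(B) \subseteq R^\Gamma$. (I should double-check whether $\eqref{cnt2}$ truly forces $\sigma_i$-invariance: if $z \in R = B_\bzero$ is central, then $z(j_it_i) = (j_it_i)z = j_i\sigma_i(z)t_i$, and since $B$ is a domain and $j_it_i \neq 0$, cancel to get $j_i z = j_i \sigma_i(z)$ for all $j_i \in J_i$; as $J_i \neq 0$ and $R$ is a domain, $z = \sigma_i(z)$.)

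For the final claim, assume $R$ is $\Gamma$-simple; I must show $\eqref{cnt3} \Rightarrow \eqref{cnt1}$, i.e. $R^\Gamma$ is a field. Let $0 \neq z \in R^\Gamma$. The ideal $zR$ of $R$ is $\Gamma$-invariant (since $z$ is $\Gamma$-invariant and $R$ is commutative with $\sigma_i$ automorphisms: $\sigma_i(zR) = \sigma_i(z)\sigma_i(R) = zR$), and it is nonzero, so $\Gamma$-simplicity gives $zR = R$, hence $z$ is a unit of $R$ with $z\inv \in R$. Then for any $\sigma \in \Gamma$, applying $\sigma$ to $z z\inv = 1$ and using $\sigma(z) = z$ gives $z \sigma(z\inv) = 1 = z z\inv$, so $\sigma(z\inv) = z\inv$, i.e. $z\inv \in R^\Gamma$. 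Therefore $R^\Gamma$ is a field, and since $\cnt(B) = R^\Gamma$ by $\eqref{cnt3}$, condition $\eqref{cnt1}$ holds. Combined with $\eqref{cnt1} \Rightarrow \eqref{cnt2} \Rightarrow \eqref{cnt3}$, all three are equivalent under $\Gamma$-simplicity.

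The main obstacle is the implication $\eqref{cnt1} \Rightarrow \eqref{cnt2}$: ruling out nonzero central elements of nonzero degree. The cleanest route is to observe that $\cnt(B)$ is a $\ZZ^n$-graded subalgebra (automatic since $\ZZ^n$ is an ordered group / the grading group is torsion-free, so homogeneous components of a central element are themselves central), and then to show a homogeneous central element $z$ of degree $\balpha \neq \bzero$ cannot be invertible in $\cnt(B)$. Indeed $z\inv$, being central homogeneous of degree $-\balpha$, satisfies $z z\inv = 1 \in B_\bzero$; writing $z = r_\balpha t^\balpha$ and $z\inv = r_{-\balpha} t^{-\balpha}$ with $r_\balpha \in I^{(\balpha)}$, $r_{-\balpha} \in I^{(-\balpha)}$, the product lies in $I^{(\balpha)}\sigma^{\balpha}(I^{(-\balpha)})$. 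When all $H_i = J_i = R$ this can equal $R$, but then $B$ is the full skew Laurent ring and $\cnt(B)$ is still just $R^\Gamma$ (a field iff $R^\Gamma$ is), so the implication reduces to the commutative-skew-Laurent computation; when some $H_i$ or $J_i$ is proper, $I^{(\balpha)}\sigma^\balpha(I^{(-\balpha)})$ is a proper ideal of $R$ in the relevant coordinate and cannot contain $1$. I will handle this by citing that $\cnt(B)$ sits inside $\cnt(Q_\bp[\bt^{\pm 1};\bsigma])$ via Theorem~\ref{thm.loc}, reducing to the fact that the center of an iterated skew Laurent extension over a commutative domain by infinite-order automorphisms lies in degree $\bzero$ — a standard fact that follows because a nonzero homogeneous element $q t^\balpha$ of that ring commutes with $r \in Q$ only if $\sigma^\balpha = \mathrm{id}$ on $Q$, forcing $\balpha = \bzero$ by torsion-freeness of the orbit action (equivalently, infinite order of each $\sigma_i$ on $Q$).
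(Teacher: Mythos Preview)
Your arguments for $\eqref{cnt2}\Rightarrow\eqref{cnt3}$ and for $\eqref{cnt3}\Rightarrow\eqref{cnt1}$ under $\Gamma$-simplicity are correct and match the paper's proof essentially verbatim: commuting a central $r\in R$ with some $j_it_i$ and cancelling (using that $B$ is a domain) forces $\sigma_i(r)=r$; and for a nonzero $r\in R^\Gamma$ the ideal $rR$ is $\Gamma$-invariant, hence equals $R$.

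The gap is in $\eqref{cnt1}\Rightarrow\eqref{cnt2}$. Your several attempts all rely on hypotheses that are not present in Section~\ref{sec.simplicity}. The direct ``homogeneous units of nonzero degree do not exist'' argument fails: if, say, every $H_i=J_i=R$ and there is $\balpha\neq\bzero$ with $\bsigma^{\balpha}=\id_R$ and $p_{\balpha,\be_k}=1$ for all $k$, then $\bt^{\balpha}\in B$ is a homogeneous central \emph{unit} of nonzero degree. Your fallback via Theorem~\ref{thm.loc} then argues that the center of $Q_\bp[\bt^{\pm 1};\bsigma]$ lies in degree $\bzero$, but that claim requires $\bsigma^{\balpha}=\id_Q\Rightarrow\balpha=\bzero$ (``infinite-order automorphisms''). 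The standing assumptions for this section are only that $R$ is a commutative noetherian domain; the torsion-freeness used in Section~\ref{sec.weight} is not assumed here, so the deduction is unjustified.

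The paper sidesteps all of this with a short degree trick that works with no hypothesis on the order of the $\sigma_i$: if $a\in\cnt(B)_{\balpha}$ is nonzero with $\balpha\neq\bzero$, consider $1+a\in\cnt(B)$. Choose a group order $<$ on $\ZZ^n$ with $\balpha>0$. If $\cnt(B)$ were a field, $1+a$ would have an inverse $b=\sum b_{\beta_i}\in B$; comparing lowest and highest degree terms in $(1+a)b=1$ forces $b$ to be homogeneous, and then $b+ab=1$ is an equation between elements of two distinct graded pieces, a contradiction. This is the missing idea.
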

\begin{proof}
$\eqref{cnt1} \Rightarrow \eqref{cnt2}$
Suppose there exists some nonzero $a \in \cnt(B)_\balpha$ for some $\balpha \in \ZZ^n \backslash \{\bzero\}$. Then $1+a \in \cnt(B)$ and so there exists some $b \in \cnt(B)$ that is a multiplicative inverse for $1+a$. Let $<$ be an ordering on $\ZZ^n$. We may assume that $\balpha > 0$. Write $b = b_{\beta_1} + \cdots + b_{\beta_k}$ where $0 \neq b_{\beta_i} \in B_{\beta_i}$ with each $\beta_i \in \ZZ^n$. Assume $\beta_1 < \dots < \beta_k$. Hence, the term of lowest degree in $(1+a)b$ is $1b_{\beta_1}$ and the term of highest degree is $ab_{\beta_k} \neq 0$. It follows that $k=1$ and so $b_{\beta_1} + ab_{\beta_1}=1$ but this contradicts $\balpha+\beta_1>\beta_1$. Thus, $\cnt(A) \subset R$.

$\eqref{cnt2} \Rightarrow \eqref{cnt3}$
Let $r \in \cnt(B) \subset R$. Then for any $at_i \in B_{\be_i}$ we have $r(at_i)=(at_i)r$ or, equivalently, $(r-\sigma_i(r))at_i$ (here we use the fact that $R$ is commutative). Since $R$ is a domain, so is $B$ and hence $\sigma_i(r)=r$. 

Now assume that $R$ is $\Gamma$-simple. We claim $\eqref{cnt3} \Rightarrow \eqref{cnt1}$. Let $r \in \cnt(B)=R^\Gamma$. Then $Rr=rR$ is a $\Gamma$-invariant ideal of $R$. Thus, if $r\neq 0$, then $Rr=R$, so $r$ is invertible.
\end{proof}

\begin{lemma}\label{lem.simp3}
If $B$ is simple, then $R$ is $\Gamma$-simple.
\end{lemma}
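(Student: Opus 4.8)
The statement to prove is: if $B$ is simple, then $R$ is $\Gamma$-simple. The plan is to argue by contrapositive: suppose $R$ is not $\Gamma$-simple, so there is a $\Gamma$-invariant ideal $I$ of $R$ with $0 \neq I \neq R$. From this I would manufacture a proper nonzero two-sided ideal of $B$, contradicting simplicity. The natural candidate is the set $\widetilde{I} = \bigoplus_{\balpha \in \ZZ^n} (I^{(\balpha)} \cap I)\,\bt^\balpha$, or perhaps more simply $IB = BI$ (these should coincide here, since $I$ is $\Gamma$-invariant, so $I$ commutes past each $t_i^{\pm 1}$ in the ambient skew Laurent ring). The first thing to check is that $IB$ is genuinely a two-sided ideal of $B$: since $I$ is $\sigma_i$-invariant for every $i$ (as $I$ is $\Gamma$-invariant), we have $t_i^{\pm 1} I = \sigma_i^{\pm 1}(I) t_i^{\pm 1} = I t_i^{\pm 1}$ inside $R_\bp[\bt^{\pm 1};\bsigma]$, so $I$ is central-like with respect to the $\bt$-variables and $BI = IB$ is a two-sided ideal.

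The second point is that $IB \neq B$. This is where I would use that $R$ is a domain together with the grading: the degree-$\bzero$ component of $IB$ is contained in $I \cdot (\text{degree-}\bzero\text{ part that can be produced})$. More carefully, $(IB)_\bzero = \sum_{\balpha} (I \cdot I^{(\balpha)}) \cdot \sigma^{\balpha}(\cdots)$ landing back in degree $\bzero$ — but any degree-$\bzero$ element of $B$ arising from a product $a\bt^\balpha \cdot a'\bt^{-\balpha}$ with $a \in I^{(\balpha)}I$, $a' \in I^{(-\balpha)}$ gives $a \sigma^\balpha(a') \in I$ since $a \in I$ and $I$ is an ideal. Hence $(IB)_\bzero \subseteq I \subsetneq R = B_\bzero$, so $IB \neq B$. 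And $IB \neq 0$ since $I \neq 0$. This contradicts the simplicity of $B$, completing the contrapositive.

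The main obstacle I anticipate is being careful about exactly which $\ZZ^n$-graded component description of $IB$ to use and verifying that its degree-$\bzero$ part is a proper ideal of $R$ — one must make sure that multiplying elements of various graded pieces of $B$ by $I$ and landing back in degree $\bzero$ cannot escape $I$. The commutativity of $R$ and the fact that $I$ is a two-sided ideal fixed by all $\sigma_i$ handle this, but it requires tracking the $I^{(\balpha)}$ coefficients through the multiplication rule $I^{(\balpha)}\bt^\balpha \cdot I^{(\bbeta)}\bt^\bbeta = I^{(\balpha+\bbeta)}\bt^{\balpha+\bbeta}$ from Lemma~\ref{lem.BRgen}, intersecting everything with $I$. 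An alternative, slightly slicker route — which may be what the authors do — is to observe that $\widetilde{I} := IB \cap B$ inherits the grading, that $\widetilde{I}_\bzero = I$ directly, and then simplicity of $B$ forces $\widetilde{I} \in \{0, B\}$, hence $I \in \{0,R\}$; this avoids computing the full graded decomposition and isolates the one fact one really needs, namely $(IB)_\bzero = I$.
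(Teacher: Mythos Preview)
Your proof is correct and is essentially the same as the paper's: the authors take a proper $\Gamma$-invariant ideal $K$ of $R$, compute $(BKB)\cap R = \sum_{\balpha} B_\balpha K B_{-\balpha} = \sum_\balpha \sigma^\balpha(K) B_\balpha B_{-\balpha} \subseteq K$, and conclude from simplicity that $BKB=0$, hence $K=0$. Your ideal $IB=BI$ coincides with $BIB$ by $\Gamma$-invariance, and your ``slicker route'' of reading off the degree-$\bzero$ component is precisely the paper's computation.
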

\begin{proof}
Let $K$ be a proper $\Gamma$-invariant ideal of $R$. Since $R=B_0$, we have
\[
(BKB) \cap R = \sum_{\balpha \in \ZZ^n} B_{\balpha} K B_{-\balpha}
	= \sum_{\balpha \in \ZZ^n}\sigma^{\balpha}(K) B_{\balpha} B_{-\balpha}
	\subset KR \subset K.
\]
Hence, $BKB$ is a proper ideal of $B$, so $BKB=\{0\}$. That is, $K=\{0\}$.
\end{proof}

We are now ready to prove our simplicity criteria.

\begin{theorem}\label{thm.HBRsimple}
The BR algebra $B$ is simple if and only if the following conditions hold:
\begin{enumerate}
\item \label{spr1} $BxB=B$ for all $x \in \cX$,
\item \label{spr2} $R$ is $\Gamma$-simple, and
\item \label{spr3} $\cnt(B) \subset R$.
\end{enumerate}
\end{theorem}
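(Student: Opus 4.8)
The strategy is to combine the reduction in Lemma~\ref{lem.simp2} with the description of the localization $C = B\cX^{-1} \iso Q_\bp[\bt^{\pm 1};\bsigma]$ from Theorem~\ref{thm.loc}, together with the ideal-theoretic machinery of Lemmas~\ref{lem.cnt1}--\ref{lem.cntequiv}. For the forward direction, assume $B$ is simple. Then condition~\eqref{spr1} is immediate, since for each $x \in \cX$ the ideal $BxB$ is nonzero (as $x \neq 0$ and $B$ is a domain by Lemma~\ref{lem.domain}), hence equals $B$. Condition~\eqref{spr2} is exactly Lemma~\ref{lem.simp3}. For~\eqref{spr3}, note that by Lemma~\ref{lem.simp2}, simplicity of $B$ forces $C$ to be simple, and by the just-established~\eqref{spr1} together with Lemma~\ref{lem.cnt1}(2) we get $\cnt(B) = \cnt(C)$. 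But $C$ is simple, so $\cnt(C)$ is a field; hence $\cnt(B)$ is a field, and by Lemma~\ref{lem.cntequiv} $\eqref{cnt1}\Rightarrow\eqref{cnt2}$ we conclude $\cnt(B) \subset R$.

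For the converse, assume~\eqref{spr1}, \eqref{spr2}, and~\eqref{spr3} hold. By Lemma~\ref{lem.simp2}, it suffices to show $C$ is simple (condition~\eqref{spr1} being assumed). Let $K$ be a nonzero ideal of $C$. Since $R$ is $\Gamma$-simple, Lemma~\ref{lem.cnt2} applies: $\cnt(C)$ is an essential subalgebra of $C$, so $K \cap \cnt(C) \neq \{0\}$; moreover, as remarked after Lemma~\ref{lem.cnt2}, we may choose a nonzero $c' \in K \cap \cnt(C)$ of the form $c' = 1 + \sum_{\balpha \neq \bzero} c'_\balpha$. Now I want to show $c'$ is a unit in $\cnt(C)$, which would give $1 \in K$ and hence $K = C$. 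Using~\eqref{spr1} and Lemma~\ref{lem.cnt1}(2), we have $\cnt(B) = \cnt(C)$; combined with~\eqref{spr3}, Lemma~\ref{lem.cntequiv} (whose three conditions are equivalent under $\Gamma$-simplicity) gives that $\cnt(B) = \cnt(C)$ is a field. Therefore the nonzero element $c' \in \cnt(C)$ is invertible in $\cnt(C) \subseteq C$, so $1 \in K$ and $C$ is simple. By Lemma~\ref{lem.simp2} again, $B$ is simple.

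The main obstacle—or rather the main subtlety to get right—is the careful bookkeeping around when $\cnt(B) = \cnt(C)$: this equality relies on condition~\eqref{spr1}, which must be established (or assumed) \emph{before} invoking Lemma~\ref{lem.cntequiv} to promote ``$\cnt(B) \subset R$'' to ``$\cnt(B)$ is a field.'' In the forward direction one derives~\eqref{spr1} from simplicity of $B$ first and only then feeds it into the centralizer comparison; in the backward direction~\eqref{spr1} is a hypothesis and the essentiality of $\cnt(C)$ from Lemma~\ref{lem.cnt2} does the heavy lifting. Everything else is a direct citation of the preceding lemmas, so no new computation is required.
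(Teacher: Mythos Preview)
Your proof is correct and follows essentially the same route as the paper, invoking Lemmas~\ref{lem.simp2}, \ref{lem.simp3}, \ref{lem.cnt1}, \ref{lem.cnt2}, and \ref{lem.cntequiv} in the same logical order. The only cosmetic differences are that (i) in the forward direction the paper obtains ``$\cnt(B)$ is a field'' directly from simplicity of $B$ rather than detouring through $C$, and (ii) in the converse the paper finishes by observing $\cnt(C)\subset R$ forces $c'\in R\cap\bigl(1+\sum_{\balpha\neq\bzero}C_\balpha\bigr)=\{1\}$, whereas you argue $\cnt(C)$ is a field so $c'$ is a unit---both conclusions yield $1\in K$ equally well.
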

\begin{proof}
Suppose $B$ is simple. Condition \eqref{spr1} is clear while
condition \eqref{spr2} follows from Lemma \ref{lem.simp3}. Since $B$ is simple, then $\cnt(B)$ is a field. Thus condition \eqref{spr3} follows from Lemma \ref{lem.cntequiv}.

Now suppose that the three conditions hold. Since we have assumed condition \eqref{spr1}, then by Lemma \ref{lem.simp2} it suffices to prove that $C$ is simple. Let $K$ be a nonzero ideal of $C$. By Lemma \ref{lem.cnt2}, $K \cap \cnt(C) \cap \left(1+ \sum_{\balpha \in \ZZ^n\backslash\{0\}} C_\balpha\right) \neq \emptyset$. Conditions \eqref{spr1} and \eqref{spr3} along with Lemma \ref{lem.cnt1}(2) imply that $\cnt(C)=\cnt(B) \subset R$. But then $\cnt(C) \cap \left(1+ \sum_{\balpha \in \ZZ^n\backslash\{0\}} C_\balpha\right)  = \{1\}$, so $1 \in K$.
\end{proof}

Unfortunately, Theorem \ref{thm.HBRsimple}~\eqref{spr1}
is difficult to check in practice. In what follows, we present
an alternative condition in the spirit of Proposition \ref{prop.simple}.
This condition is implied by Theorem \ref{thm.HBRsimple}~\eqref{spr1}
and we conjecture that it is equivalent in the context of that theorem.

\begin{lemma}
Let $i\in [n]$, then the following are equivalent:
\begin{enumerate}
    \item $\css_i(B)\cap \sigma_i^{k}(\css_i(B))=\emptyset$ for all $k\in\ZZ_{>0}$,
    \item $H_iJ_i+\sigma_i^k(H_iJ_i)=R$ for all $k\in\ZZ_{>0}$,
    \item $BI_i^{(k)}t_i^kB=B$ for all $k\in\ZZ_{>0}$.
\end{enumerate}
\end{lemma}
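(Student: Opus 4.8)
The statement is an equivalence of three conditions; I will prove the cycle $(1)\Leftrightarrow(2)$ directly, then $(2)\Leftrightarrow(3)$. The first equivalence is essentially a restatement of what it means for two closed subsets of $\Spec(R)$ to be disjoint. Recall $\css_i(B) = \Spec(R/H_iJ_i) = V(H_iJ_i)$. Applying the automorphism $\sigma_i^k$ to $\Spec(R)$ carries $V(H_iJ_i)$ to $V(\sigma_i^k(H_iJ_i))$. Now for any two ideals $\mathfrak a, \mathfrak b$ of a commutative ring, $V(\mathfrak a) \cap V(\mathfrak b) = V(\mathfrak a + \mathfrak b) = \emptyset$ if and only if $\mathfrak a + \mathfrak b = R$ (since $V(\mathfrak c) = \emptyset$ iff $\mathfrak c$ is contained in no maximal, hence prime, ideal, iff $\mathfrak c = R$). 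Taking $\mathfrak a = H_iJ_i$ and $\mathfrak b = \sigma_i^k(H_iJ_i)$ gives $(1)\Leftrightarrow(2)$ immediately.

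For $(2)\Leftrightarrow(3)$, the key computation is to identify the two-sided ideal $BI_i^{(k)}t_i^k B$ inside $B$, or at least its degree-zero part. Since $t_i^k$ and elements of $I_i^{(k)}$ commute with the other $t_j$ and $\sigma_j$-fixed data, the relevant behavior is entirely in the $i$-th direction, so this reduces to the rank-one situation. Working in $B$, I will show that $\left(BI_i^{(k)}t_i^kB\right)_\bzero$, the degree-zero component, equals $R\cdot \sigma_i^{-k}(H_iJ_i)\cdots$ type products summed against their ``inverses from the left'', which collapses to the ideal $H_iJ_i + \sigma_i^k(H_iJ_i) + \sigma_i^{2k}(H_iJ_i) + \cdots$ (and its negative-direction translates). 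More precisely: $B_{-k\be_i}\cdot I_i^{(k)}t_i^k = \sigma_i\inv(H_i)\cdots\sigma_i^{-k}(H_i)\, t_i^{-k}\cdot J_i\sigma_i(J_i)\cdots\sigma_i^{k-1}(J_i)\, t_i^k$, and after moving $t_i^{-k}$ past and simplifying this contains $\sigma_i^{-k}(H_iJ_i)\cdot(\text{stuff})$; multiplying further on left/right by $B_{j\be_i}, B_{-j\be_i}$ and using that $B$ is a domain, one sees the degree-zero part of the two-sided ideal generated is exactly the smallest $\sigma_i$-invariant-in-the-relevant-sense ideal containing all $\sigma_i^{jk}(H_iJ_i)$ for $j \in \ZZ$. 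Then $BI_i^{(k)}t_i^kB = B$ iff $1$ lies in this degree-zero ideal iff $\sum_{j\in\ZZ}\sigma_i^{jk}(H_iJ_i) = R$, which by a telescoping/finiteness argument (only finitely many summands are needed, and $\sigma_i$ is an automorphism so $\sigma_i^{jk}(H_iJ_i)+\sigma_i^{(j+1)k}(H_iJ_i) = \sigma_i^{jk}(H_iJ_i + \sigma_i^k(H_iJ_i))$) is equivalent to $H_iJ_i + \sigma_i^k(H_iJ_i) = R$, i.e.\ condition (2). (That last step: if $H_iJ_i + \sigma_i^k(H_iJ_i) = R$ then applying powers of $\sigma_i^k$ and chaining gives $\sum_{j=0}^{N}\sigma_i^{jk}(H_iJ_i) = R$; conversely a sum over $\ZZ$ being $R$ forces, by applying $\sigma_i^{-jk}$ and using that comaximality is preserved and "spreads", the two-term comaximality — alternatively argue contrapositively via a common prime.)

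**Main obstacle.** The genuinely delicate step is the precise bookkeeping in $(3)$: showing that the degree-zero part of the two-sided ideal $BI_i^{(k)}t_i^kB$ is *exactly* $\sum_{j\in\ZZ}\sigma_i^{jk}(H_iJ_i)$ and not something larger or smaller, given that $H_i$ and $J_i$ enter asymmetrically in the definition of $I_i^{(k)}$ (one uses $\sigma_i^{-1}(H_i)\cdots$, the other uses $J_i\sigma_i(J_i)\cdots$). One must check both inclusions carefully, handling multiplication by positive- and negative-degree homogeneous elements of $B$ and repeatedly invoking that $B$ is a domain (Lemma~\ref{lem.domain}) so that cancellation of $t_i$-powers is legitimate, and that the $I_i^{(\ell)}$ are all nonzero so the Ore-type manipulations make sense. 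I expect that once the identity $\left(BI_i^{(k)}t_i^kB\right)_\bzero = \sum_{j\in\ZZ}\sigma_i^{jk}(H_iJ_i)$ is established, and the observation $BI_i^{(k)}t_i^kB = B \iff 1\in \left(BI_i^{(k)}t_i^kB\right)_\bzero$ (the ideal is $\ZZ^n$-graded and contains a unit iff its degree-zero part does, since $R$ is commutative and $B$ a graded domain), the rest is the elementary comaximality argument sketched above. I would also remark, to reduce clutter, that by Lemma~\ref{lem.iter} we may assume $n=1$ for the equivalence $(2)\Leftrightarrow(3)$, since all data involved is concentrated in the $i$-th coordinate.
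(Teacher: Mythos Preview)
Your proof of $(1)\Leftrightarrow(2)$ is correct and matches the paper's. The problem is in $(2)\Leftrightarrow(3)$, where your sketch contains two genuine errors.

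First, the degree-zero computation is wrong. Working in rank one with $\sigma=\sigma_i$, $H=H_i$, $J=J_i$, one has
\[
\left(BI^{(k)}t^kB\right)_0=\sum_{\ell\in\ZZ} I^{(\ell)}\sigma^\ell(I^{(k)})\sigma^{\ell+k}(I^{(-\ell-k)})
=\sum_{m\in\ZZ}\prod_{j=m}^{m+k-1}\sigma^j(HJ),
\]
a sum of length-$k$ products of consecutive shifts of $HJ$, not $\sum_{j\in\ZZ}\sigma^{jk}(HJ)$ as you claim. (For instance, the $\ell=0$ term is $J\sigma(J)\cdots\sigma^{k-1}(J)\cdot H\sigma(H)\cdots\sigma^{k-1}(H)=\prod_{j=0}^{k-1}\sigma^j(HJ)$, not a single shift of $HJ$.)

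Second, even granting your formula, your ``telescoping/contrapositive'' step fails: $\sum_{j}\sigma^{jk}(HJ)=R$ does \emph{not} imply $HJ+\sigma^k(HJ)=R$. Take $R=\kk[x]$, $\sigma(x)=x+1$, $HJ=(x(x-1))$, $k=1$: then $HJ+\sigma(HJ)=(x-1)\neq R$, yet the infinite sum is $R$ since $(x-1)(x-2)$ is coprime to $x(x+1)$. A prime containing two specified shifts need not contain all of them. The same obstruction applies to the correct degree-zero formula above.

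The paper avoids this by computing the graded piece in degree $(k-1)\be_i$ rather than degree $\bzero$. That computation yields exactly
\[
\left(BI^{(k)}t^kB\right)_{(k-1)\be_i}=\bigl(\sigma^{k-1}(HJ)+\sigma^{-1}(HJ)\bigr)\,I^{(k-1)}t^{k-1},
\]
so the two-term sum appears directly (up to a $\sigma$-shift). For $(3)\Rightarrow(2)$, Nakayama over the noetherian domain $R$ cancels $I^{(k-1)}$; for $(2)\Rightarrow(3)$, one gets $I^{(k-1)}t^{k-1}\subseteq BI^{(k)}t^kB$ and then inducts on $k$ (this is the argument of Hartwig--\"Oinert cited in the paper). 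Looking at degree $k-1$ rather than $0$ is precisely what makes the collapse to a two-term comaximality condition possible.
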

\begin{proof}
(1)$\iff$(2). We prove the contrapositives. First we observe that the following statements are equivalent:
\begin{itemize}
    \item $\css_i(B)\cap \sigma_i^{k}(\css_i(B))\neq\emptyset$,
    \item there are $\bp,\bq\in\Spec(R)$ such that $\bp=\sigma_i^k(\bq)$, $H_iJ_i\subset \bp$, and $H_iJ_i\subset \bq$, 
    \item $H_iJ_i\subset \bp$ and $\sigma_i^k(H_iJ_i)\subset \bp$, 
    \item $H_iJ_i+\sigma_i^k(H_iJ_i)\subset \bp$. 
\end{itemize}
Hence, if $\css_i(B)\cap \sigma_i^{k}(\css_i(B))\neq\emptyset$, then there is a prime ideal $\bp$ with $H_iJ_i+\sigma_i^k(H_iJ_i)\subset \bp\subsetneq R$. Conversely, if $H_iJ_i+\sigma_i^k(H_iJ_i)\subsetneq R$, then by Zorn's Lemma there is a maximal ideal $\frm\in\Spec(R)$ with $H_iJ_i+\sigma_i^k(H_iJ_i)\subset \frm$, hence $\css_i(B)\cap \sigma_i^{k}(\css_i(B))\neq\emptyset$.

(2)$\iff$(3). First, we claim that $BI_i^{(k)}t_i^kB\cap B_{(k-1)\be_i}=\sum_{\ell\in\ZZ}I_i^{(\ell)}t_i^\ell I_i^{(k)}t_i^k I_i^{(-\ell-1)}t_i^{-\ell-1}$. It is clear that we have the inclusion $\supset$, so suppose that $y \in BI_i^{(k)}t_i^kB\cap B_{(k-1)\be_i}$. Then, there exists $a_k \in I_i^{(k)}$, $b_\balpha \in I^{(\balpha)}$ and $c_{-(\balpha+\be_i)} \in I^{({-(\balpha+\be_i)})}$ for some $\balpha \in \ZZ^n$
such that
\begin{align*}
y &= \sum_{\balpha \in \ZZ^n} \left( b_\balpha \bt^\balpha \right)a_kt_i^k \left(c_{-(\balpha+\be_i)} \bt^{-(\balpha+\be_i)} \right) \\
&= \sum_{\balpha \in \ZZ^n} \gamma_\balpha (b_{\alpha_i} t_i^{\alpha_i}) \left( \sigma_i\inv(b_{\widehat{\balpha}}) \bt^{\balpha-\alpha_i\be_i} \right)a_kt_i^k \left(c_{-(\balpha+\be_i)} \bt^{-(\balpha+\be_i)} \right) \\
&= \sum_{\balpha \in \ZZ^n} \gamma_\balpha' (b_{\alpha_i} t_i^{\alpha_i}) \left(  (\sigma_i\inv(b_{\widehat{\balpha}}) \sigma^{\balpha-\alpha_i\be_i}(a_k))t_i^k \right)\left(\sigma^{\balpha-\alpha_i\be_i}(c_{-(\balpha+\be_i)}) t_i^{-\alpha_i - 1} \right)
\end{align*}
where $b_{\widehat{\balpha}}=b_{\alpha_i}\inv b_\balpha$ and some $\gamma_\balpha,\gamma_\balpha' \in \kk^\times$.
Now it is clear that $\gamma_\balpha' b_{\alpha_i} \in I_i^{(\alpha_i)}$, $\sigma_i\inv(b_{\widehat{\balpha}}) \sigma^{\balpha-\alpha_i \be_i}(a_k) \in I_i^{(k)}$, and $\sigma^{\balpha-\alpha_i\be_i}(c_{-(\balpha+\be_i)}) \in I_i^{(-\alpha_i - 1)}$. Setting $\alpha_i=\ell$ proves the claim.

Now, to simplify notation, fix $i$ and set $J=J_i$, $H=H_i$, $I=I_i$, $t=t_i$, and $\sigma=\sigma_i$. Then we have
\begin{align*}
BI^{(k)}t^kB\cap B_{(k-1)\be_i}
    &=\sum_{\ell\in\ZZ}I^{(\ell)}t^\ell I^{(k)}t^k I^{(-\ell-1)}t^{-\ell-1}\\
    &=\sum_{\ell\geq 1}I^{(\ell)}\sigma^{\ell}(I^{(k)})\sigma^{\ell+k}(I^{(-\ell-1)})t^{k-1}
        +RI^{(k)}\sigma^k(I^{(-1)})t^{k-1}+I^{(-1)}\sigma^{-1}(I^{(k)})t^{k-1}R \\
        &\qquad +\sum_{\ell\leq -2}I^{(\ell)}\sigma^{\ell}(I^{(k)})\sigma^{\ell+k}(I^{(-\ell-1)})t^{k-1}\\
    &=\sum_{\ell\geq 1}J\cdots \sigma^{\ell+k-1}(J)\sigma^{\ell+k}(\sigma^{-1}(H)\cdots \sigma^{-\ell-1}(H))t^{k-1}\\
        &\qquad + R J\cdots \sigma^{k-1}(J)\sigma^k\sigma^{-1}(H)t^{k-1}+R\sigma^{-1}(H)\sigma^{-1}(J\cdots \sigma^{k-1}(J))t^{k-1}\\
        &\qquad +\sum_{\ell\leq -2}\sigma^{-1}(H)\cdots \sigma^{\ell}(H)\sigma^{\ell}(J\cdots \sigma^{k-1}(J))\sigma^{\ell+k}(J\cdots \sigma^{-\ell-2}(J))t^{k-1}\\
    &=\sum_{\ell\geq 1}J\cdots \sigma^{\ell+k-1}(J)\sigma^{k-1}(H)\cdots \sigma^{\ell+k-1}(H))t^{k-1} \\
        &\qquad + R J\cdots \sigma^{k-1}(J)\sigma^{k-1}(H)t^{k-1}+R\sigma^{-1}(H)\sigma^{-1}(J)\cdots \sigma^{k-2}(J)t^{k-1} \\
        &\qquad +\sum_{\ell\leq -2}\sigma^{-1}(H)\cdots \sigma^{\ell}(H)\sigma^{\ell}(J)\cdots \sigma^{k-2}(J)t^{k-1}\\
    &= R\sigma^{k-1}(HJ) I^{(k-1)}t^{k-1}+R\sigma^{-1}(HJ) I^{(k-1)}t^{k-1}\\
    &= R(\sigma^{k-1}(HJ)+\sigma^{-1}(HJ)) I^{(k-1)}t^{k-1}.
\end{align*}
Since $\sigma^{k-1}(HJ)+\sigma^{-1}(HJ)=R$ if and only if $HJ+\sigma^k(HJ)=R$, this gives us the desired equivalence as in the proof of \cite[Lemma 7.19]{HO}.
\end{proof}

It is clear that the condition $BI_i^{(k)}t_i^kB=B$ for all $i\in [n]$, $k\in\ZZ_{>0}$ is implied by $BxB=B$ for all $x\in\cX$, but the reverse direction is not clear. 
Going forward, it would be good to develop further tools for analyzing the simplicity of BR algebras of rank $n>1$.


\begin{thebibliography}{10}

\bibitem{B1}
V.~V. Bavula.
\newblock Generalized {W}eyl algebras and their representations.
\newblock {\em Algebra i Analiz}, 4(1):75--97, 1992.

\bibitem{B3}
V.~Bavula.
\newblock Filter dimension of algebras and modules, a simplicity criterion of
  generalized {W}eyl algebras.
\newblock {\em Comm. Algebra}, 24(6):1971--1992, 1996.

\bibitem{BR}
J.~Bell and D.~Rogalski.
\newblock {$\Bbb{Z}$}-graded simple rings.
\newblock {\em Trans. Amer. Math. Soc.}, 368(6):4461--4496, 2016.

\bibitem{cap}
A.~Cap, H.~Schichl, and J.~Van{\v{z}}ura.
\newblock On twisted tensor products of algebras.
\newblock {\em Comm. Algebra}, 23(12):4701--4735, 1995.

\bibitem{FH1}
V.~Futorny and J.~T. Hartwig.
\newblock Multiparameter twisted {W}eyl algebras.
\newblock {\em J. Algebra}, 357:69--93, 2012.

\bibitem{GR}
J.~Gaddis and D.~Rosso.
\newblock Fixed rings of twisted generalized {W}eyl algebras.
\newblock {\em J. Pure Appl. Algebra}, 227(4):Paper No. 107257, 30, 2023.

\bibitem{GR3}
J.~Gaddis and D.~Rosso.
\newblock Twists of twisted generalized {W}eyl algebras.
\newblock {\em To appear in Bulletin of the London Mathematical Society
  (arXiv:2406.04172)}, 2024.

\bibitem{GRW1}
J.~Gaddis, D.~Rosso, and R.~Won.
\newblock Weight modules over {B}ell-{R}ogalski algebras.
\newblock {\em J. Algebra}, 633:270--297, 2023.

\bibitem{HO}
J.~T. Hartwig and J.~\"{O}inert.
\newblock Simplicity and maximal commutative subalgebras of twisted generalized
  {W}eyl algebras.
\newblock {\em J. Algebra}, 373:312--339, 2013.

\bibitem{hart1}
J.~T. Hartwig.
\newblock Locally finite simple weight modules over twisted generalized {W}eyl
  algebras.
\newblock {\em J. Algebra}, 303(1):42--76, 2006.

\bibitem{HR1}
J.~T. Hartwig and D.~Rosso.
\newblock Classification of twisted generalized {W}eyl algebras over polynomial
  rings.
\newblock {\em J. Algebra}, 546:274--293, 2020.

\bibitem{jsimp1}
D.~A. Jordan.
\newblock Simple skew {L}aurent polynomial rings.
\newblock {\em Comm. Algebra}, 12(1-2):135--137, 1984.

\bibitem{Jprim}
D.~A. Jordan.
\newblock Primitivity in skew {L}aurent polynomial rings and related rings.
\newblock {\em Math. Z.}, 213(3):353--371, 1993.

\bibitem{joseph}
A.~Joseph.
\newblock A generalization of {Q}uillen's lemma and its application to the
  {W}eyl algebras.
\newblock {\em Israel J. Math.}, 28(3):177--192, 1977.

\bibitem{LMO}
A.~Leroy, J.~Matczuk, and J.~Okni\'{n}ski.
\newblock On the {G}el'fand-{K}irillov dimension of normal localizations and
  twisted polynomial rings.
\newblock In {\em Perspectives in ring theory ({A}ntwerp, 1987)}, volume 233 of
  {\em NATO Adv. Sci. Inst. Ser. C Math. Phys. Sci.}, pages 205--214. Kluwer
  Acad. Publ., Dordrecht, 1988.

\bibitem{MPT}
V.~Mazorchuk, M.~Ponomarenko, and L.~Turowska.
\newblock Some associative algebras related to {$U(\frak g)$} and twisted
  generalized {W}eyl algebras.
\newblock {\em Math. Scand.}, 92(1):5--30, 2003.

\bibitem{MT}
V.~Mazorchuk and L.~Turowska.
\newblock Simple weight modules over twisted generalized {W}eyl algebras.
\newblock {\em Comm. Algebra}, 27(6):2613--2625, 1999.

\bibitem{sham}
A.~Shamsuddin.
\newblock Rings with automorphisms leaving no nontrivial proper ideals
  invariant.
\newblock {\em Canad. Math. Bull.}, 25(4):478--486, 1982.

\bibitem{St2}
J.~T. Stafford.
\newblock Homological properties of the enveloping algebra {$U({\rm Sl}_{2})$}.
\newblock {\em Math. Proc. Cambridge Philos. Soc.}, 91(1):29--37, 1982.

\end{thebibliography}

\end{document}